\documentclass[bj,authoryear]{imsart}

\RequirePackage[OT1]{fontenc}
\RequirePackage{amsthm,amsmath,amsfonts,amssymb,bbm}
\RequirePackage[colorlinks,citecolor=blue,urlcolor=blue]{hyperref}
\RequirePackage{graphicx}
\RequirePackage{calc}
\RequirePackage{chngcntr} 

\pubyear{2024}
\volume{0}
\issue{0}
\firstpage{1}
\lastpage{1}

\startlocaldefs
\numberwithin{equation}{section}

\theoremstyle{plain}
\newtheorem{theorem}{Theorem}[section]
\newtheorem{proposition}[theorem]{Proposition}
\newtheorem{corollary}[theorem]{Corollary}
\newtheorem{lemma}[theorem]{Lemma}

\theoremstyle{definition}
\newtheorem{definition}[theorem]{Definition}
\newtheorem{example}[theorem]{Example}
\newtheorem{remark}[theorem]{Remark}


\newcommand{\op}{ \mathop{ \vphantom{\bigoplus} \mathchoice {\vcenter{\hbox{\resizebox{\widthof{$\displaystyle\bigoplus$}}{!}{$\boxplus$}}}} {\vcenter{\hbox{\resizebox{\widthof{$\bigoplus$}}{!}{$\boxplus$}}}} {\vcenter{\hbox{\resizebox{\widthof{$\scriptstyle\oplus$}}{!}{$\boxplus$}}}} {\vcenter{\hbox{\resizebox{\widthof{$\scriptscriptstyle\oplus$}}{!}{$\boxplus$}}}} }\displaylimits } 

\newcommand{\optimes}{ \mathop{ \vphantom{\bigotimes} \mathchoice {\vcenter{\hbox{\resizebox{\widthof{$\displaystyle\bigotimes$}}{!}{$\boxtimes$}}}} {\vcenter{\hbox{\resizebox{\widthof{$\bigotimes$}}{!}{$\boxtimes$}}}} {\vcenter{\hbox{\resizebox{\widthof{$\scriptstyle\oplus$}}{!}{$\boxtimes$}}}} {\vcenter{\hbox{\resizebox{\widthof{$\scriptscriptstyle\oplus$}}{!}{$\boxtimes$}}}} }\displaylimits } 

\newenvironment{reptheorem}[1]{%
\addtocounter{theorem}{-1}%
\begin{theorem}
  }{%
\end{theorem}}

\counterwithout{figure}{section}


\newcommand{\un}{\mathbbm{1}}
\newcommand{\hh}{\phi}
\newcommand{\HH}{\Phi}
\newcommand{\philambert}{H}

\DeclareMathOperator{\tr}{Tr}

\DeclareMathOperator{\gra}{Gra}
\DeclareMathOperator{\ran}{Ran}
\DeclareMathOperator{\dom}{Dom}

\DeclareMathOperator{\Incr}{Incr}

\DeclareMathOperator{\id}{Id}

\DeclareMathOperator{\osc}{Osc}
\DeclareMathOperator{\et}{\text{and}}
\DeclareMathOperator{\vect}{Vec}
\DeclareMathOperator{\ou}{\text{or}}

\newcommand\restrict[2]{{#1}\raisebox{-.5ex}{$|$}_{#2}}

\endlocaldefs

\begin{document}

\begin{frontmatter}

\title{Operations on Concentration Inequalities}
\runtitle{Operations on Concentration Inequalities}

\begin{aug}
\author[A]{\fnms{Cosme}~\snm{Louart}\ead[label=e1]{cosmelouart@cuhk.edu.cn}}
\address[A]{School of Data Science, \\ 
The Chinese University of Hong Kong (Shenzhen), \\ 
Shenzhen, China \\
\printead{e1}}
\end{aug}

\begin{abstract}
Following the concentration of the measure theory formalism, we consider the transformation $\Phi(Z)$ of a random variable $Z$ having a general concentration function $\alpha$. If the transformation $\Phi$ is $\lambda$-Lipschitz with $\lambda>0$ deterministic, the concentration function of $\Phi(Z)$ is immediately deduced to be equal to $\alpha(\cdot/\lambda)$. If the variations of $\Phi$ are bounded by a random variable $\Lambda$ having a concentration function (around $0$) $\beta: \mathbb R_+\to \mathbb R$, this paper sets that $\Phi(Z)$ has a concentration function analogous to the so-called parallel product of $\alpha$ and $\beta$. With this result at hand (i) we express the concentration of random vectors with independent heavy-tailed entries, (ii) given a transformation $\Phi$ with bounded $k^{\text{th}}$ differential, we express the so-called ``multilevel'' concentration of $\Phi(Z)$ as a function of $\alpha$, and the operator norms of the successive differentials up to the $k^{\text{th}}$ (iii) we obtain a heavy-tailed version of the Hanson--Wright inequality.

Finally, in order to rigorously handle the algebraic operations that arise on concentration functions (parallel sums, parallel products, and non-unique
pseudo-inverses), we develop at the beginning of the paper a functional
framework based on maximally monotone set-valued operators, which provides a natural and coherent formalism for studying these transformations.

\end{abstract}

\begin{keyword}
\kwd{Concentration of Measure}
\kwd{Heavy-tailed probabilities}
\kwd{Hanson--Wright inequality}
\kwd{Set-valued operators algebra}
\end{keyword}

\end{frontmatter}


\tableofcontents


\section*{Introduction and main results}
A fundamental result (see \cite{ledoux2005concentration,boucheron2003concentration}) in concentration of measure theory states that if $Z \sim \mathcal N(0, I_n)$ is a standard Gaussian random vector in $\mathbb R^n$, then for every function $f : \mathbb R^n \to \mathbb R$ that is $1$-Lipschitz with respect to the Euclidean norm,
\begin{align}\label{eq:intro_gaussian_conc}
  \forall t \geq 0:\qquad \mathbb P \left( \left\vert f(Z) - f(Z') \right\vert > t \right)\leq 2 e^{-t^2/4},
\end{align}
where $Z'$ denotes an independent copy of $Z$. We refer to $\alpha : t \mapsto 2 e^{-t^2/2}$ as a \emph{concentration function} for $Z$.

\subsection*{Randomized Lipschitz Control}
Let $F: \mathbb R^n \to \mathbb R^q$ be $\lambda$-Lipschitz with respect to the Euclidean norm on $\mathbb R^n$ and $\mathbb R^q$, and let $Z'$ be an independent copy of $Z$. Then
\begin{align*}
  \left\Vert F(Z) - F(Z') \right\Vert \leq \lambda \left\Vert Z - Z' \right\Vert,
\end{align*}
and a standard argument shows (after a rescaling of $t$) that the random vector $F(Z) \in \mathbb R^q$ satisfies a similar concentration inequality with concentration function $t \mapsto \alpha(t/\lambda)$. 
The core result of this paper extends this observation to a general concentration function $\alpha:\mathbb{R}_+ \to \mathbb{R}_+$, and to cases where the Lipschitz parameter is no longer a deterministic constant $\lambda>0$ but is given by a random variable $\Lambda(Z)$ for some measurable map $\Lambda:\mathbb{R}^n\to\mathbb{R}_+$.

\begin{theorem}[Concentration under Randomized Lipschitz Control]\label{the:concentration_bounded_variations_intro}
  Let $(E, d)$ and $(E', d')$ be metric spaces, let $Z$ be an $E$-valued random variable, and let $\Lambda: E \to \mathbb R_+$ be measurable. Suppose there exist two strictly decreasing functions $\alpha,\beta : \mathbb R_+ \to \mathbb R_+$ such that, for every $1$-Lipschitz map $f:E \to \mathbb R$ and any independent copy $Z'$ of $Z$:
  \begin{align*}
      \forall t\geq 0:\qquad \mathbb P \left( \left\vert f(Z) - f(Z') \right\vert > t\right) \leq \alpha (t) ,
      \qquad
      \mathbb P \left( \Lambda(Z) > t\right)\leq \beta (t).
  \end{align*}
  Let $\Phi: E \to E'$ be a transformation satisfying
  \begin{align*}
    \forall z,z'\in E:\qquad d'(\Phi(z), \Phi(z')) \leq \max (\Lambda(z), \Lambda(z'))\, d(z, z').
  \end{align*}
  Then, for every $1$-Lipschitz map $g:E' \to \mathbb R$:
  \begin{align}\label{eq:intro_parallel_sum_prob}
    \forall t \geq 0:\qquad  \mathbb P \left( \left\vert g(\Phi(Z)) - g(\Phi(Z'))  \right\vert > t\right) 
    \leq 3\,(\alpha^{-1} \cdot \beta^{-1})^{-1} (t).
  \end{align}
\end{theorem}

As explained in Section~\ref{sse:pivot_concentration}, up to universal numerical constants, the assumptions and conclusion of Theorem~\ref{the:concentration_bounded_variations_intro} can be reformulated by replacing the pivots $f(Z')$ and $g(\Phi(Z'))$ by the medians $m_f$ and $m_g$ of $f(Z)$ and $g(\Phi(Z))$, respectively.
When $\alpha$ is integrable on $\mathbb R_+$, the result can even be stated in terms of expectations instead of medians or independent copies.

A more general version of Theorem~\ref{the:concentration_bounded_variations_intro} is given in Theorem~\ref{the:concentration_bounded_variations}. There we allow $\alpha$ and $\beta$ to be constant on subsets of their domain, and we consider a random variable $\Lambda(Z)$ that can be written as the product of $n$ random variables, each with its own concentration function. 
Theorem~\ref{the:concentration_bounded_variations_convex} provides an analogous result under weaker so-called ``convex concentration'' assumptions on $Z$, in the case where $\Phi$ is $\mathbb{R}$-valued.

The expression $(\alpha^{-1} \cdot  \beta^{-1})^{-1}$ in~\eqref{eq:intro_parallel_sum_prob} is reminiscent of $(\alpha^{-1} +  \beta^{-1})^{-1}$, the so-called parallel sum, originally introduced in electrical engineering to model parallel resistor networks, and later generalized to matrices in \cite{anderson1969series} and to nonlinear operators in convex analysis in \cite{anderson1978fenchel} (see also \cite[Chapter~24]{bauschke10convex} for a presentation in the context of set-valued functions). 
The parallel sum is traditionally denoted by $\square$, but since we shall also introduce a parallel product, we find it more convenient to denote it by $\boxplus$ (and the parallel product by $\boxtimes$).
The action of these operations on graphs can be visualized as in Figure~\ref{fig:parallel_sum}.

\begin{figure}[t]
\begin{minipage}[t]{6cm}
\includegraphics[width=6.6cm]{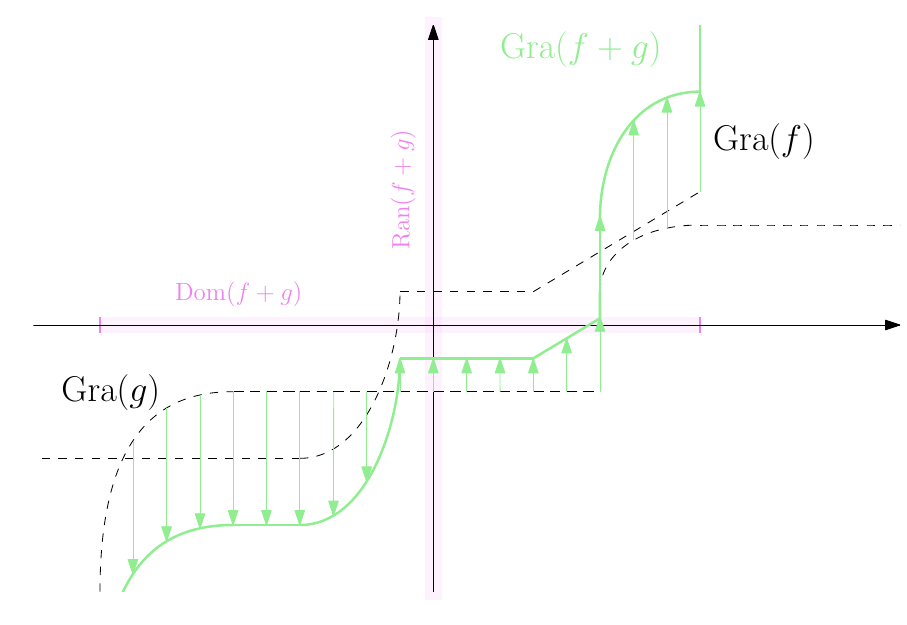}
\end{minipage}
\hspace{1cm}
\begin{minipage}[t]{7cm}
\includegraphics[width=6.6cm]{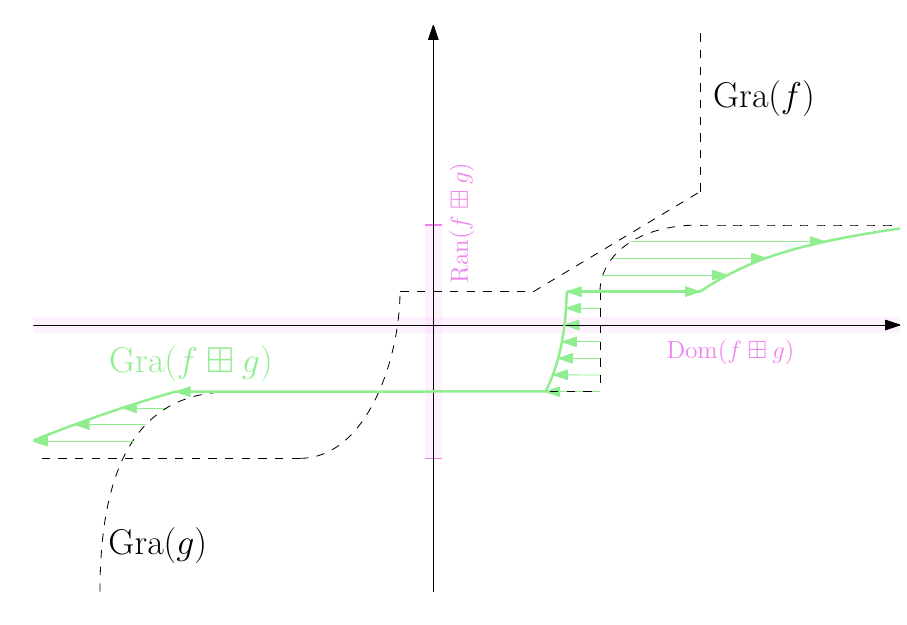}
\end{minipage}
\caption{Classical sum \textbf{(Left)} and parallel sum \textbf{(Right)} of two operators. From the second graph one might even wonder whether the parallel sum should rather be called the perpendicular sum.}
\label{fig:parallel_sum}
\end{figure}

\subsection*{Heavy-tailed concentration}
Theorem~\ref{the:concentration_bounded_variations_intro} allows us to derive concentration inequalities via measure transport.
More precisely, let $\Phi: \mathbb R\to \mathbb R$ be a convex transport map sending the Gaussian measure onto a given measure $\nu$. If we can bound the concentration of
\[
  \Lambda(Z) \equiv \sup_{i\in[n]}\Phi'(Z_i),
\]
where $Z_1,\ldots, Z_n$ are i.i.d.\ $\mathcal N(0,1)$ random variables, then Theorem~\ref{the:concentration_bounded_variations_intro} yields concentration bounds for the vector $(\Phi(Z_i))_{i\in[n]}\sim \nu ^ {\otimes n}$ as a consequence of the Gaussian concentration inequality~\eqref{eq:intro_gaussian_conc}. 
This method allows us to recover, in a unified way, the results of \cite{CGGR-HeavyTailIso}, originally obtained via weak Poincaré inequalities; see
Proposition~\ref{pro:q_subsexponential_concentration_weak_poincare}, which we improve in Corollary~\ref{cor:q_subsexponential_concentration_weak_poincare_improved}, and Proposition~\ref{pro:q_Cauchy_concentration_weak_poincare}, which we reprove using our approach. These results provide dimension dependent Lipschitz concentrations that can be used as assumption for other theorems like Theorem~\ref{the:concentration_bounded_variations_intro} or~\ref{the:concentration_differentiable_mapping}.

One of the main contributions of this paper is the observation that any nonnegative random variable can be expressed as a convex functional of a random variable with bounded support (one possible choice for such functionals is given in \eqref{eq:def_phi}). 
By combining this representation with Talagrand's classical theorem on Gaussian concentration for vectors with independent bounded entries, and by adapting Theorem~\ref{the:concentration_bounded_variations_intro} to the convex-concentration setting, we obtain concentration inequalities for norms of heavy-tailed random vectors.

\begin{theorem}[Heavy-tailed concentration of Euclidean norm]\label{the:concentration_l2}
Given $q>0$, there exist constants $C,c>0$ such that for any $n\in \mathbb N$ and any random vector $X\in \mathbb R^n$ with independent entries,
\begin{align*}
\forall t\geq 0:\qquad \mathbb P \left( \left\vert \|X\| - \|X'\| \right\vert > t \right)
\leq C n M'_q \left( \frac{ \log^{2}(1+ct)}{ct} \right)^q,
\end{align*}
where $X'$ is an independent copy of $X$ and
\[
M'_q \equiv \sup_{i\in [n]} \mathbb E \left[ (e+|X_i|)^q\right].
\]
\end{theorem}

\subsection*{Multi-level concentration for $d$-times differentiable functionals}
As explained in Subsection~\ref{sec:heavy_tailed_random_vector_concentration}, this result is primarily relevant when $q>4$, since for $q\leq 4$ sharper bounds can be obtained by means of the von Bahr--Esseen inequality \cite{vonBahr-Esseen-65} or the Fuk--Nagaev inequality \cite{fuk1973certain,nagaev1979large}.

The control of non-Lipschitz functionals has been studied by several authors. To be brief, let us mention the work of Vu \cite{vu2002concentration} on binary variables, 
the introduction by Lata{\l}a \cite{latala2006estimates} of specific operator norms on tensors in order to obtain concentration for polynomials of Gaussian variables,
and subsequent extensions to more general variables and functionals in \cite{adamczak2015concentration,gotze2021concentration,gotze2021concentration_poly,buterus2023some}. This line of research leads naturally to the notion of \emph{multilevel concentration inequalities}, where the concentration rate typically takes the form
\begin{align*}
  t\mapsto \exp \left( - \inf_{a\in A} \left( \frac{t}{\sigma_a} \right)^a \right),
\end{align*}
for some finite set $A\subset \mathbb R_+$ and parameters $(\sigma_a)_{a\in A}\in \mathbb R_+^A$. Such multilevel behavior arises naturally when one takes the parallel product of $k$ nonincreasing functions of the following form (see Lemma~\ref{lem:sup_sup_inf_inf_relation})
\begin{align*}
  t \longmapsto \alpha \!\circ\! \min_{a\in A^{(1)}} \left( \frac{t}{\sigma^{(1)}_a} \right)^a, \ \ \ldots, \ \ 
  t \longmapsto \alpha \!\circ\! \min_{a\in A^{(k)}} \left( \frac{t}{\sigma^{(k)}_a} \right)^a,
\end{align*}
for some nonincreasing (possibly heavy-tailed) function $\alpha$ playing the role of the Gaussian benchmark $t \mapsto e^{-t^2}$. We present in Theorem~\ref{the:concentration_n_multi_lipschitz_with_conjugate_function} a first setting where such multilevel concentration arises.

An even more natural framework is provided by the long-standing study of the concentration of chaos and, more generally, of functionals with bounded $k^{\text{th}}$ differential, $k\in \mathbb N$. Two main approaches have been developed for these objects.

The older approach is based on Hermite polynomials and was introduced by Christer Borell \cite{borell1984taylor}. Although this work is not available online, it is cited in \cite{ledoux1988note}, which studies Gaussian chaos of maximal order $d$. Concentration inequalities closer to those obtained here were later proved for Gaussian random variables in \cite{arcones1993decoupling} and for random vectors with independent log-concave entries in \cite{lochowski2006moment}, where quantiles, rather than medians, appear as pivots.

The more recent approach was initiated by Lata{\l}a in \cite{latala2006estimates} and relies on moment bounds to obtain two-sided concentration inequalities for Gaussian chaos. The tensor-product norms that appear in these results can be numerous, but the expectation is taken inside the norms, which can be seen as an advantage. This approach was generalized in~\cite{adamczak2015concentration} to more general functionals with higher-order bounded derivatives and for random vectors satisfying log-Sobolev inequalities, and was further extended in~\cite{gotze2021concentration_poly} to so-called ``$\alpha$-sub-exponential'' random variables and also to heavy-tailed random variables \cite{buterus2023some}.

The two approaches are known to be ``essentially equivalent'' in the case $d=2$ (see the discussion at the end of Section~3 in \cite{adamczak2017moment}), while the equivalence problem remains open for higher orders. Our work follows the first, Borell--Ledoux type approach. In this perspective, our main contribution is to extend existing results to (i) functionals with bounded $d^{\text{th}}$ derivative and (ii) arbitrary random vectors $Z\in E$ taking values in a metric space $(E,d)$ that admits a concentration function\footnote{Following an idea of Ledoux, presented at the beginning of his book \cite{ledoux2005concentration}, a natural general choice for $\alpha$ is
\begin{align*}
  \forall t\geq0:\qquad \alpha(t) \equiv \sup_{f: E\to \mathbb R,\ 1\text{-Lipschitz}}\mathbb P \left( \left\vert f(Z) - m_f \right\vert > t  \right),
\end{align*}
where $m_f$ is a median of $f(Z)$.} $\alpha: \mathbb R_+\to \mathbb R_+$.

Given two Euclidean vector spaces $E$ and $F$ and an integer $d \in \mathbb N$, we denote by $\mathcal D^d(E, F)$ the set of $d$-times differentiable maps from $E$ to $F$ and by $\mathcal L^d(E,F)$ the set of $d$-linear maps from $E^d$ to $F$. For $h\in \mathcal L^d(E,F)$ we denote by $\|h \|$ its operator norm, defined by
\begin{align}\label{eq:linear_map_operator_norm}
  \|h\| = \sup \left\{ \| h(x_1,\ldots, x_d) \|:\ x_1,\ldots, x_d \in  E \ \text{and}\ \|x_1\|,\ldots,\|x_d\|\leq 1 \right\}.
\end{align}
Given $k \in [d]$, $f\in \mathcal D^d(E, F)$, and $x\in E$, we denote by $\restrict{d^kf}{x} \in \mathcal L^k(E,F)$ the $k^{\text{th}}$ differential of $f$ at the point $x$.

\begin{theorem}[Concentration of functionals with bounded $d^{\text{th}}$-derivative]\label{the:concentration_differentiable_mapping}
  Let $Z \in \mathbb R^{n}$ be a random vector such that for every $1$-Lipschitz function $f: \mathbb R^{n}\to \mathbb R$ and every $t\ge 0$,
  \begin{align*}
    \mathbb P \left( \left\vert f(Z) - m_f \right\vert > t \right) \leq  \alpha(t), 
  \end{align*}
  for some median $m_f$ of $f(Z)$ and some nonincreasing function $\alpha : \mathbb R_+ \to \mathbb R_+$. Then, for any $d$-times differentiable map $\Phi: \mathbb R^{n} \to \mathbb R^{p}$ and any $1$-Lipschitz function $g: \mathbb R^{p} \to \mathbb R$,
  \begin{align*}
    \mathbb P \left( \left\vert g(\Phi(Z)) - m_g \right\vert > t \right) \leq  2^{d}\,\alpha \left( \frac{1}{e}\min_{k\in[d]} \left( \frac{t}{d\,m_k} \right)^{\frac{1}{k}} \right),
  \end{align*}
  where $m_g$ is a median of $g\circ\Phi(Z)$, for each $k \in [d-1]$ the quantity $m_k$ is a median of $\|\restrict{d^k\Phi}{Z}\|$, and $m_d = \|d^d\Phi\|_\infty$.
\end{theorem}
The proof of this theorem relies on an iterative application of our random-Lipschitz transfer principle (Theorem~\ref{the:concentration_bounded_variations_intro}), combined with parallel operations on the concentration functions of the successive derivatives of $\Phi$. A more general and stronger version of this result is provided in Theorem~\ref{the:concentration_differentiable_mapping_strong}.

Among other consequences, Theorem~\ref{the:concentration_differentiable_mapping} yields a generalization of the Hanson--Wright inequality to random vectors with concentration function $\alpha$ satisfying
\[
\int_{\mathbb R_+}\alpha(\sqrt{t})\,dt < \infty,
\]
which is equivalent to having a finite second moment; see Theorem~\ref{the:hanson_Wright_adamczac_integrable_alpha}.

\subsection*{Set-valued operator framework}
The appearance of parallel operations on concentration functions suggests working in a framework where such operations are naturally defined and possess good algebraic properties. This motivates encoding concentration via set-valued (maximally monotone) operators $\alpha : \mathbb R \to 2^{\mathbb R}$, for which parallel sum and product arise as standard constructions. In particular, this operator viewpoint allows us to rigorously treat noninvertible concentration functions (such as indicator-type functions, see \eqref{eq:increment_operator}) and to avoid technical complications related to one-sided continuity of their pseudo-inverses (the structural reason for this major simplification is provided by Proposition~\ref{pro:characterization_resolvent_order_2}).

Within this framework, we introduce a natural and novel order relation between operators, based on their resolvents (Definition~\ref{def:resolvent}). This order renders it unnecessary to explicitly track the threshold $t\in \mathbb R$ in the concentration inequalities we seek to establish. Instead, concentration statements are formulated directly in terms of the set-valued survival function 
\begin{align}\label{eq:operator_concentration_function}
   S_X(t) \equiv [\mathbb P(X>t), \mathbb P(X\geq t)],
\end{align}
associated with any real-valued random variable $X$. The connection between probabilities and set-valued mappings has been explored previously by Rockafellar (see, for instance \cite{rockafellar2014random}) in the context of risk measures such as superquantiles.

\textbf{Section~I} develops a complete framework for operators on $\mathbb R$. 
To formulate inequalities between operators (see \textbf{Subsection~\ref{sub:pointwise_resolvent_order}}) and to exploit distributive properties between parallel sum, parallel product, composition, and min/max (see \textbf{Subsection~\ref{sub:distributive_properties_of_parallel_operations_and_composition}}), we restrict ourselves to maximally monotone operators on $\mathbb R$ whose basic properties are presented in \textbf{Subsection~\ref{sub:maximally_monotone_operators_notation}}. The minimum and maximum of maximally monotone operators are defined in \textbf{Subsection~\ref{sub:minimum_and_maximum_of_maximally_monotone_operators}} to simplify computations and to provide a partial interpretation of the parallel sum. Finally, \textbf{Subsection~\ref{sub:concentration_of_the_sum_and_product}} describes how the sum (respectively the product) of two concentration inequalities can be expressed using the parallel sum (respectively the parallel product).

It is a straightforward exercise to check that many of these results on operators reduce to elementary statements in the case of single-valued invertible maps. However, we found it useful to devote a full section to establishing them rigorously in the broader setting of operators: first, because most of the notions and results we introduce are, to the best of our knowledge, new (although fairly intuitive); and second, because once the framework is in place, it becomes easy to formulate concentration inequalities in this more general and flexible setting.

\textbf{Section~II} is devoted to probabilistic applications. \textbf{Subsection~\ref{sse:pivot_concentration}} explains how to deal with different choices of concentration pivot, such as medians, independent copies, or expectations. \textbf{Subsection~\ref{sub:concentration_in_high_dimension}} gathers key results on concentration in high dimension and establishes concentration for transformations with concentrated variations. \textbf{Subsection~\ref{sec:heavy_tailed_random_vector_concentration}} presents several examples of heavy-tailed concentration in high dimension. In \textbf{Subsection~\ref{sec:multilevel_concentration}} we introduce appropriate parallel-operations techniques that explain the appearance of multilevel concentration and we prove Theorem~\ref{the:concentration_differentiable_mapping}. Finally, in \textbf{Subsection~\ref{sub:consequences_for_hanson_wright_concentration_inequality}} we apply our results to derive a heavy-tailed version of the Hanson--Wright inequality.

\section{Functional-analytic results}

\subsection{Maximally monotone operators -- Notation}\label{sub:maximally_monotone_operators_notation}

We denote $\mathbb R_+$ (resp. $\mathbb R_-$) the set of nonnegative (resp. nonpositive) real numbers, $\mathbb R^* \equiv \mathbb R\setminus\{0\}$ and $\mathbb R_+^* = \mathbb R_+\setminus \{0\}$. 
Given $A\subset \mathbb{R}$, we denote by $\bar A$ the closure of $A$, by $\mathring A$ the interior of $A$, and by $\partial A$ the boundary of $A$ (so that $\partial A = \bar A \setminus \mathring A$). 
In addition to those classical notations, we denote:
\begin{align*}
A_+ = \{x \in \mathbb R : \exists a\in A,\ a\leq x\},
\qquad
A_- = \{x \in \mathbb R : \exists a\in A,\ a\geq x\}.
\end{align*}
We extend the relation ``$\leq$'' to a relation between intervals.
For any intervals\footnote{We define this relation on intervals because closed intervals represent the main example that we will consider. Besides, although the relation is reflexive and transitive for general subsets of $\mathbb R$, it becomes anti-symmetric when restricted to intervals.} of $\mathbb R$ $I, J\subset \mathbb R$ we define:
\begin{align}\label{eq:order_relation_intervals}
&I \leq J
\qquad \Longleftrightarrow \qquad
J \subset I_+
\quad \et \quad
I \subset J_-.
\end{align}
and we denote $I\geq J$ when $-I\leq -J$. Note that given $a,b,\alpha, \beta \in \mathbb R$, $[a,b] \leq [\alpha, \beta]$ iff $a\leq \alpha$ and $b\leq \beta$. The above definitions extend easily to cases where $I$ or $J$ are scalars $a, b \in \mathbb{R}$, by identifying them with the singletons $\{a\}$ or $\{b\}$.

The Minkowski sum and product between sets $A,B\subset \mathbb R$ are defined as
\begin{align*}
A+B=\{a+b:a\in A,b\in B\}
&&\et&&
A\cdot B=\{ab:a\in A,b\in B\}.
\end{align*}
and we define similarly $A-B$ and $A/B = A/ (B\setminus \{0\})$.
Given two intervals $I,J \subset \mathbb{R}$ and $x\in\mathbb{R}$, one has in particular:
\begin{align}\label{eq:equiv_p_m_interval}
I+\{x\} = J
\quad \Leftrightarrow \quad 
I = J-\{x\}
&&\et&&
I+\{x\}\leq J
\quad \Leftrightarrow \quad 
I\leq J-\{x\},
\end{align}
and if $I,J\subset \mathbb R_+^*$ and $x\in \mathbb R_+^*$: $I\cdot \{x\}\leq J \ \Leftrightarrow I\leq J/\{x\}$.

Given an operator $f: \mathbb R\to 2^{\mathbb R}$, the graph of $f$ is the set $\gra(f) = \{(x,y)\in \mathbb R^2, y\in f(x)\}$. The inverse of $f$ is the set-valued mapping $f^{-1}$ satisfying:
\begin{align*}
\gra(f^{-1}) = \{(y,x)\in \mathbb R^2, (x,y) \in \gra(f)\},
\end{align*}
we will repeatedly use the equivalence:
\begin{align}\label{eq:equivalence_x_y_f_f_m1}
y\in f(x) \qquad \Longleftrightarrow \qquad x\in f^{-1}(y).
\end{align}
The domain of $f$ is denoted $\dom(f) = \{x \in \mathbb R, f(x) \neq \emptyset\}$ and the range $\ran(f) = \{y \in \mathbb R, \exists x\in \mathbb R: y\in f(x)\}$. 
We denote $\id : \mathbb R \to \mathbb R$ the function defined for all $x\in \mathbb R$ as $\id(x) = x$ ($\dom(\id) =\ran(\id) = \mathbb R$).
 (Scalar-valued) functions are identified with operators whose images are all singletons. Given a set $A \subset \mathbb R$, we naturally define:
\begin{align*}
f(A) = \left\{ y\in \mathbb R, \exists x\in A, y\in f(x) \right\} = \bigcup_{x\in A}f(x).
\end{align*} 
Given two operators $f,g$, the composition of $f$ and $g$ is the operator defined for any $x\in \mathbb R$ as $f\circ g(x) = f(g(x))=\bigcup_{y\in g(x)} f(y)$.

One then has:
\begin{align}\label{eq:compo_inverse}
(f\circ g)^ {-1} = g^ {-1}\circ f^ {-1}.
\end{align}
The definition of the sum (resp. the product) between two operators $f,g : \mathbb R \to 2^{\mathbb R}$ simply relies on the Minkowski sum and product between sets: $f+g: x \mapsto f(x) +g(x)$ (resp. $f\cdot g: x \mapsto f(x) \cdot g(x)$); their domain is exactly $\dom(f) \cap \dom(g)$.
\begin{definition}\label{def:closed_parallel_sum}
Given two operators $f,g: \mathbb R \to 2^{\mathbb R}$, we denote the parallel sum and the parallel product of $f$ and $g$ as follows:
\begin{align*}
f \boxplus g = (f^{-1}+g^{-1})^{-1}
&&\quad\text{and}\quad&&
f \boxtimes g = (f^{-1} \cdot g^{-1})^{-1}.
\end{align*}
\end{definition}

The parallel operations are commutative and associative as standard addition and multiplication.
\begin{lemma}\label{lem:associativity_parallel_sum_product}
Given three operators $f, g, h : \mathbb R \to 2^{\mathbb R}$, one has the identities:
\begin{itemize}
\item $f \boxplus g = g \boxplus f$ and $f \boxtimes g = g \boxtimes f$,
\item $(f \boxplus g) \boxplus h = f \boxplus (g \boxplus h)$ and $(f \boxtimes g) \boxtimes h = f \boxtimes (g \boxtimes h)$,
\end{itemize}
\end{lemma}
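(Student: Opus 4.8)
\textbf{Proof strategy for Lemma~\ref{lem:associativity_parallel_sum_product}.}

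The plan is to reduce every identity to the involution $(f^{-1})^{-1} = f$ (Lemma~\ref{lem:inverse_involution}) together with the elementary algebra of ordinary pointwise sums and products of operators, which are manifestly commutative and associative on $\mathbb{R}$. For the commutativity items, I would simply write $f \boxplus g = (f^{-1} + g^{-1})^{-1} = (g^{-1} + f^{-1})^{-1} = g \boxplus f$ using commutativity of the ordinary operator sum, and the identical computation with $\cdot$ in place of $+$ for $\boxtimes$.

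For associativity, the key point is that inverting, summing, and inverting again is ``transparent'': $(f \boxplus g)^{-1} = ((f^{-1}+g^{-1})^{-1})^{-1} = f^{-1} + g^{-1}$ by the involution. Hence
\begin{align*}
  (f \boxplus g) \boxplus h
  = \bigl( (f\boxplus g)^{-1} + h^{-1} \bigr)^{-1}
  = \bigl( f^{-1} + g^{-1} + h^{-1} \bigr)^{-1},
\end{align*}
and the same expression is obtained from $f \boxplus (g \boxplus h)$, using associativity of the ordinary sum of operators; the $\boxtimes$ case is identical with $\cdot$. One should be slightly careful that these manipulations respect domains: the ordinary sum $f^{-1}+g^{-1}$ has domain $\dom(f^{-1})\cap\dom(g^{-1})$ by definition, so $f^{-1}+g^{-1}+h^{-1}$ has domain $\dom(f^{-1})\cap\dom(g^{-1})\cap\dom(h^{-1})$ regardless of the order of association, and inversion does not alter graphs; so the set-valued identities hold exactly, not merely up to domain.

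The last item, $f \boxtimes (g \boxplus h) = (f \boxtimes g) + (f \boxtimes h)$, is the one to watch, since it mixes a parallel operation with an ordinary sum. I would unfold the left side via the involution as above: $(g \boxplus h)^{-1} = g^{-1} + h^{-1}$, so
\begin{align*}
  f \boxtimes (g \boxplus h)
  = \bigl( f^{-1} \cdot (g\boxplus h)^{-1} \bigr)^{-1}
  = \bigl( f^{-1} \cdot (g^{-1} + h^{-1}) \bigr)^{-1}
  = \bigl( f^{-1}\cdot g^{-1} + f^{-1}\cdot h^{-1} \bigr)^{-1}.
\end{align*}
where the last step is pointwise distributivity of the product over the sum in $\mathbb{R}$, applied inside the graph. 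Now I want to identify $(\,(f^{-1}\cdot g^{-1}) + (f^{-1}\cdot h^{-1})\,)^{-1}$ with $(f\boxtimes g)\boxplus(f\boxtimes h) = ((f\boxtimes g)^{-1} + (f\boxtimes h)^{-1})^{-1} = (f^{-1}\cdot g^{-1} + f^{-1}\cdot h^{-1})^{-1}$ — so in fact the stated right-hand side $(f\boxtimes g)+(f\boxtimes h)$ should be understood with the parallel sum, i.e. the displayed identity is $f\boxtimes(g\boxplus h) = (f\boxtimes g)\boxplus(f\boxtimes h)$, matching the text's assertion that ``the parallel product distributes on the parallel sum.'' I would state it in that corrected form, noting that this is the genuine analogue of ordinary distributivity.

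The main obstacle is purely bookkeeping: verifying that each pointwise identity in $\mathbb{R}$ (commutativity, associativity, distributivity of $+,\cdot$) lifts correctly to the set-valued setting where $f(x)\subset\mathbb{R}$ can be empty or multi-valued, and that the domains computed on both sides genuinely coincide. Since Lemma~\ref{lem:inverse_involution} guarantees inversion is an involution on graphs with no loss, and the ordinary operator sum and product are defined pointwise with domain the intersection of domains, no subtlety arises beyond careful unwinding of definitions.
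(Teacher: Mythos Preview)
Your proposal is correct and matches the paper's approach, which is essentially a one-line pointer (the paper only says the lemma is ``a simple consequence of Lemma~\ref{lem:order_fm1f_id}''). Your unfolding via the involution $(f^{-1})^{-1}=f$ from Lemma~\ref{lem:inverse_involution} is in fact the cleaner reference for this argument, and you correctly spotted that the ``$+$'' in the third item is a typo for ``$\boxplus$'', as confirmed by the sentence preceding the lemma (``the parallel product distributes on the parallel sum'').
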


Our approach relies on the notion of maximally monotone operators since it provides a natural framework where we can rely on algebraic distributiveness identities presented in Subsection~\ref{sub:distributive_properties_of_parallel_operations_and_composition} and it also allows us to properly introduce an order relation in Subsection~\ref{sub:pointwise_resolvent_order}.
Although maximality for operators on $\mathbb{R}$ is not entirely trivial, it is relatively straightforward because the class of convex sets coincides with the class of connected sets (i.e., intervals in $\mathbb{R}$).
\begin{definition}[Monotone and Maximally Monotone Operators]\label{def:maximal_operators}
An operator $f: \mathbb{R} \to 2^{\mathbb{R}}$ is nondecreasing if:
\begin{align*}
\forall (x,u), (y,v) \in \gra(f): \quad (y-x)(v-u) \geq 0,
\end{align*}
and $f$ is nonincreasing if $-f$ is nondecreasing. Both nondecreasing and nonincreasing operators are classified as monotone operators. A monotone operator $f: \mathbb{R} \to 2^{\mathbb{R}}$ is maximally monotone if there exists no monotone operator $g: \mathbb{R} \to 2^{\mathbb{R}}$ such that $\gra(f)$ is strictly contained in $\gra(g)$. Equivalently, $f$ is maximally nondecreasing iff
\begin{align}\label{eq:characterization_equivalence_def_maximal_monotone}
(x, u) \in \gra(f)
&&\Longleftrightarrow&&
\forall (y,v) \in \gra(f): (y-x)(v-u) \geq 0.
\end{align}
This ensures that $\gra(f)$ cannot be extended without violating monotonicity. For maximally nonincreasing operators, replace $(v-u)$ with $(u-v)$ in \eqref{eq:characterization_equivalence_def_maximal_monotone}. We denote $\mathcal{M}_\uparrow$ the class of maximally nondecreasing operators and $\mathcal{M}_\downarrow$ the class of maximally nonincreasing operators. We further denote $\mathcal M = \mathcal M_\uparrow \cup \mathcal M_\downarrow$.
\end{definition}
\begin{theorem}[Minty, \cite{bauschke10convex}, Theorem 21.1]\label{pro:minty_theorem}
A nondecreasing (resp. nonincreasing) operator $f:\mathbb R\to 2^{\mathbb R} $ is maximally monotone iff $\ran(\id + f) = \mathbb{R}$ (resp. $\ran(\id - f) = \mathbb{R}$).
\end{theorem}
Minty's theorem justifies the introduction of the notion of resolvent that will be useful to characterize maximally monotone operators and also to introduce an order relation in the set of maximally nondecreasing (resp. nonincreasing) operators.

\begin{definition}[Resolvents]\label{def:resolvent}
Let $f: \mathbb{R} \to 2^{\mathbb{R}}$ be monotone. We distinguish two cases:
\begin{itemize}
\item If $f$ is nondecreasing, its resolvent is
\[
J_f \equiv (\id + f)^{-1}.
\]
\item If $f$ is nonincreasing, its resolvent is
\[
J_f \equiv (\id - f)^{-1}.
\]
\end{itemize}
\end{definition}
This choice ensures that $J_f$ is always nondecreasing in both cases.
The resolvent operation provides a trivial correspondence between maximally monotone operators and nondecreasing, $1$-Lipschitz mappings. Given $T: \mathbb R\to 2^{\mathbb R}$ nondecreasing, $1$-Lipschitz, $T^{-1}-\id$ is nondecreasing (and $\id-T^{-1}$ is nonincreasing) and we have:
\begin{align}\label{eq:correspondence_resolvent}
T = J_{T^{-1}-\id} = J_{\id - T^{-1}}.
\end{align}
\begin{proposition}[\cite{bauschke10convex}, Proposition 23.7]\label{pro:correspondence_resolvent}
Given a monotone operator $f: \mathbb{R} \to 2^{\mathbb{R}}$, one has the equivalence:
\begin{align*}
\text{$f \in \mathcal{M}$}
&&\Longleftrightarrow&&
\text{$J_f \in \mathcal{M}_\uparrow$, $\dom(J_f) = \mathbb R$ and $J_f$ is 1-Lipschitz\footnote{Firmly nonexpansive, which for single-valued maps on $\mathbb{R}$ is equivalent to being nondecreasing and 1-Lipschitz: for all $x, y \in \mathbb{R}$, $\bigl(J_f x - J_f y\bigr)^2 \leq \bigl(J_f x - J_f y\bigr)(x - y).$}}.
\end{align*}
\end{proposition}
Moreover, from the definition we obtain:
\begin{align}\label{eq:range_resolvent}
\ran(J_f) = \dom(\id \pm f)= \dom(f).
\end{align}
Given a maximally monotone operator $f\in \mathcal M_\uparrow$ (resp. $f\in \mathcal M_\downarrow$), the Minty's parametrization (see \cite{bauschke10convex}, $(23.18)$) can be expressed as:
\begin{align}\label{eq:minty_param}
M_f: x \longmapsto (J_f(x), J_{f^{-1}}(x)) \qquad \text{(resp. $M_f: x \longmapsto (J_f(x), J_{f^{-1}}(-x))$)}.
\end{align}
Noting that for all $x\in \mathbb R$:
\begin{align}\label{eq:relationJf_Jfm1}
J_{f^{-1}}(x) = x - J_{f}(x) \in f(J_{f}(x))
\qquad \text{(resp. $J_{f^{-1}}(-x) = J_{f}(x)-x \in f(J_{f}(x))$)},
\end{align}
it is not hard to see that $M_f$ is a homeomorphism (in the framework of single-valued mapping) between $\mathbb R$ and $\gra(f)$ and its inverse  is $(x,y)\mapsto x+y$ (resp. $(x,y)\mapsto x-y$). 
This remark yields:

\begin{proposition}[\cite{bauschke10convex}, Proposition 20.31, Corollary 21.12]\label{pro:ran_dom_convex}
The graph of a maximally monotone operator $f: \mathbb{R} \to 2^{\mathbb{R}}$ is closed and connected. For all $x \in \dom(f)$, $f(x)$ is a closed interval; moreover, $\ran(f)$ and $\dom(f)$ are intervals of $\mathbb{R}$ and, more generally, for any interval $I \subset \mathbb{R}$, $f(I)$ is an interval.
\end{proposition}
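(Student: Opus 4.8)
The plan is to route everything through the \emph{resolvent} $J\equiv(\id+f)^{-1}$ (replace $\id$ by $-\id$, or simply apply the result to $-f$, when $f$ is decreasing), whose good behaviour is exactly what Minty's theorem (Theorem~\ref{pro:minty_theoorem}) provides; assume then $f$ increasing. The claim about the values $f(x)$ needs no resolvent: for $x\in\dom(f)$ and $u<v$ in $f(x)$, pick $w\in(u,v)$ and any $(y,s)\in\gra(f)$; a three-line case split on the sign of $y-x$ (comparing $(y,s)$ with $(x,v)$ when $y>x$, and with $(x,u)$ when $y<x$) yields $(y-x)(s-w)\ge0$, so $w\in f(x)$ by the maximality characterization~\eqref{eq:characterizatoin_equivalence_def_maximal_monotone}; hence $f(x)$ is an interval. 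Closedness follows from the same characterization after passing to the limit in the inequalities $(y-x)(s-u_n)\ge0$.

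Next I would establish the two facts about $J$ on which the rest rests. First, $J$ is single-valued and $1$-Lipschitz: if $w_i\in J(t_i)$, i.e.\ $t_i-w_i\in f(w_i)$, monotonicity of $f$ gives $(w_1-w_2)\big((t_1-w_1)-(t_2-w_2)\big)\ge0$, which rearranges to $(w_1-w_2)^2\le(w_1-w_2)(t_1-t_2)\le|w_1-w_2|\,|t_1-t_2|$, so $|w_1-w_2|\le|t_1-t_2|$ (and $t_1=t_2$ forces $w_1=w_2$). Second, by Minty $\dom(J)=\ran(\id+f)=\mathbb R$, so $J:\mathbb R\to\mathbb R$ is a genuine continuous function, and it is increasing since $\id+f$ is. Since $\ran(J)=\dom(\id+f)=\dom(f)$ and the image of the connected set $\mathbb R$ under a continuous map is connected, $\dom(f)$ is an interval; applying this to $f^{-1}$, which is maximally monotone by the first item of Example~\ref{ex:max_mon_operator}, and using $\ran(f)=\dom(f^{-1})$, shows $\ran(f)$ is an interval as well.

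Finally, for an interval $I$ I would prove $f(I)=\bigcup_{x\in I}f(x)$ is an interval by the intermediate value theorem. Take $a<b$ in $f(I)$ and $w\in(a,b)$, and choose $x_1,x_2\in I$ with $a\in f(x_1)$ and $b\in f(x_2)$; monotonicity rules out $x_1>x_2$, and if $x_1=x_2$ then $w\in f(x_1)\subseteq f(I)$ since $f(x_1)$ is an interval. If $x_1<x_2$, consider $h(x)\equiv J(x+w)-x$, continuous on $[x_1,x_2]$. From $a\in f(x_1)$ we get $J(x_1+a)=x_1$, and since $J$ is increasing and $a<w$, $h(x_1)=J(x_1+w)-x_1\ge0$; symmetrically $b\in f(x_2)$ and $w<b$ give $h(x_2)\le0$. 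By the intermediate value theorem some $x\in[x_1,x_2]\subseteq I$ satisfies $J(x+w)=x$, i.e.\ $x+w\in(\id+f)(x)=x+f(x)$, i.e.\ $w\in f(x)\subseteq f(I)$, which is what was needed.

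The computations in the first two paragraphs are routine once Minty's theorem is in hand; the one place that takes a genuine idea is the last step, where one recognizes that ``$w$ has a preimage under $f$ inside $I$'' is precisely a fixed-point statement for $J(\,\cdot\,+w)$ amenable to the intermediate value theorem — and it is here that the full-range conclusion of Minty does the real work, since without $\dom(J)=\mathbb R$ the function $h$ need not even be defined on all of $[x_1,x_2]$.
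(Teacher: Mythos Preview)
Your proof is correct. Note, however, that the paper does not supply its own proof of this proposition: it is quoted from \cite{bauschke10convex} (Proposition~20.31 and Corollary~21.12), so there is no in-paper argument to compare against. Your argument is essentially the standard one from that reference, specialized to $\mathbb R$: the resolvent $J=(\id+f)^{-1}$ is single-valued, $1$-Lipschitz, and globally defined by Minty, whence $\dom(f)=\ran(J)$ is connected; the closedness and convexity of each $f(x)$ come straight from the maximality characterization. The final clause ``$f(I)$ is an interval for every interval $I$'' is not explicitly in the cited results (which concern $\dom f$ and $\ran f$), and your fixed-point reformulation $J(x+w)=x$ together with the intermediate value theorem is a clean way to get it; this is exactly the kind of elementary argument the paper has in mind when it remarks that maximality in $\mathbb R$ is ``quite elementary'' because convex sets coincide with intervals.
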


In particular, given $f\in \mathcal M_\uparrow$, and two intervals $I,J\subset \mathbb R$, Proposition~\ref{pro:ran_dom_convex} imply that $f(I)$, $f(J)$ are two intervals and one has the trivial implication:
\begin{align}\label{eq:f_increasing_on_intervals}
  I\leq J
  \qquad \Longrightarrow \qquad
  f(I)\leq f(J).
\end{align}
Let us finally give a simple characterization of maximally monotone operators in $\mathbb R$.

\begin{proposition}[Domain/Range Characterization of Maximality on $\mathbb R$]\label{pro:characterization_maximality_R}
A nondecreasing (resp. nonincreasing) monotone operator $f: \mathbb{R} \to 2^{\mathbb{R}}$ is maximally monotone iff $\ran(f + \id)$ (resp. $\ran(f-\id)$) is an interval and $\ran(f) + \dom(f) = \mathbb R$ (resp. $\ran(f) - \dom(f) = \mathbb R$).
\end{proposition}

\begin{remark}\label{rem:}
This proposition implies that for $f\in \mathcal M_\uparrow$, if $\dom(f)$ is bounded inferiorly then $\ran(f)$ is unbounded from below, and a symmetric property for the upper bound. In other words:
\begin{align*}
    \dom(f) \neq \dom(f)_- \quad \Rightarrow \quad \ran(f) = \ran(f)_-,
  \end{align*}
  and the same holds replacing ``$+$'' with ``$-$'' or interchanging ``$\dom$'' and ``$\ran$'' all at once.
  It could be seen as an alternative formulation of the Rockafellar--Veselý Theorem in $\mathbb R$ (\cite[Theorem 21.15]{bauschke10convex}). This theorem states that a maximally monotone operator $f: \mathbb{R} \to 2^{\mathbb{R}}$ is locally bounded at a point $x \in \mathbb{R}$ iff $x \notin \partial \dom(f)$. In other words $x\in \mathring{\dom}(f)$ iff there exists $\varepsilon > 0$ such that $f([x - \varepsilon, x + \varepsilon])$ is bounded.
\end{remark}

\begin{proof}[Proof of Proposition~\ref{pro:characterization_maximality_R}]
Let us first assume that, say, $f\in \mathcal M_\uparrow$. We already know from Proposition~\ref{pro:ran_dom_convex} that $\gra(f)$ is connected and introducing the continuous mapping $T: (x,y)\mapsto (x,x+y)$, we deduce that $\gra(f+\id) = T(\gra(f))$ is also connected and therefore that $\ran(f+\id)$ is an interval. Besides, recalling the definition of the Minty parametrization given in \eqref{eq:minty_param}, we see that:
\begin{align*}
\mathbb R  = M_f^{-1}(\gra(f)) \subset \dom(f) + \ran(f)\subset \mathbb R,
\end{align*}
since $\ran(J_f) = \dom(f)$ and $\ran(J_{f^{-1}}) = \dom(f^{-1}) = \ran(f)$. That allows us to conclude that $\dom(f) + \ran(f) = \mathbb R$.

Let us now assume, conversely, that $\ran(f+\id)$ is an interval and $\dom(f) + \ran(f) = \mathbb R$ and $f$ nondecreasing. One can assume, without loss of generality that $\sup(\ran(f)) = +\infty$ (otherwise $\sup(\dom(f)) = \sup(\ran(f^{-1})) $ and one can replace $f$ with $f^{-1}$). That yields $\sup \ran(f+\id) = +\infty$. Now if $\inf(\ran(f)) = -\infty$ the same way, $\inf\ran(f+\id) = -\infty$. If $\inf(\ran(f)) > -\infty$, the hypothesis $\dom(f) + \ran(f) = \mathbb R$ implies $\inf(\dom(f)) = -\infty$ which yields again $\inf\ran(f+\id) = -\infty$. Finally, $\ran(f+\id)$ being an interval, $\ran(f+\id) = (-\infty, +\infty) = \mathbb R$ and one can conclude with Minty theorem.
\end{proof}

\begin{example}\label{ex:max_mon_operator}
\begin{enumerate}
\item\label{itm:empty_operator} The operator with empty domain is monotone but not maximally monotone, as its graph is contained in the graph of any monotone operator.
\item\label{itm:inverse_maximal} (\cite{bauschke10convex}, Proposition 20.22) $f \in \mathcal{M}_\uparrow \ \Leftrightarrow \ f^{-1} \in \mathcal{M}_\uparrow$ and $f \in \mathcal{M}_\downarrow \ \Leftrightarrow \ f^{-1} \in \mathcal{M}_\downarrow$.
\item Given a random variable $X \in \mathbb{R}$, $S_X\in \mathcal M_\downarrow$ (see \eqref{eq:operator_concentration_function} for definition).
\item\label{itm:power_ope} Given $a>0$, $\id^a\in \mathcal M_\uparrow$ (see \eqref{eq:power_operator} for definition).
\item Be careful that $\alpha \circ \beta$ is not necessarily maximally monotone even if $\alpha$ and $\beta$ are maximally monotone (see Proposition~\ref{pro:composition_maximally_monotone} for a characterization of maximality).
\end{enumerate}
\end{example}

\subsection{Distributive properties of parallel operations and composition}\label{sub:distributive_properties_of_parallel_operations_and_composition}
Given three sets $A,B,C \subset \mathbb R$, one has the inclusion:
\begin{align*}
A \cdot (B + C) \subset A \cdot B + A\cdot C.
\end{align*}
This yields for operators $f, g, h : \mathbb R \to 2^{\mathbb R}$ and any\footnote{Recall from the definition of the sum of operators that $\dom(f + g) = \dom(f \cdot g) = \dom(f) \cap \dom(g)$.} $x\in \dom(f)\cap \dom(g)\cap \dom(h)$:
\begin{align*}
(f \cdot (g + h))(x) \subset (f \cdot g)(x) + (f \cdot h)(x),
\end{align*}
which extends, for any\footnote{We will see later that, under some assumptions, $\dom(f \boxtimes (g \boxplus h)) = \dom((f \boxtimes g) \boxplus (f \boxtimes h)) = \dom(f) \cdot (\dom(g) + \dom(h))$.} $x\in \dom(f \boxtimes (g \boxplus h)) \cap \dom((f \boxtimes g) \boxplus (f \boxtimes h))$:
\begin{align}\label{eq:distribution_prod_incl}
f \boxtimes (g \boxplus h)(x) \subset (f \boxtimes g) \boxplus (f \boxtimes h)(x),
\end{align}
thanks to \eqref{eq:equivalence_x_y_f_f_m1}. Looking at distribution under \textit{left} composition (note that classical sums/products distribute \textit{on the right}), one can further observe that, for any $x\in \dom(f\circ(g\boxplus h))\cap \dom(f\circ g \boxplus f\circ h)$:
\begin{align}\label{eq:distribution_comp_incl}
f \circ (g\boxplus h)(x)
\subset
\left( (f \circ g)\boxplus (f\circ h) \right)(x).
\end{align}
To facilitate manipulations of operators and later concentration inequalities, we seek conditions for equality
in \eqref{eq:distribution_prod_incl} and \eqref{eq:distribution_comp_incl}.
That will be settled through the study of the maximality of $f \boxplus g$, $f \boxtimes g$, and $f\circ g$. Note that trivially:
\begin{align*}
\dom(f + g) =   \dom(f \cdot g)  &= \dom(f)\cap \dom(g),\\
\ran(f \boxplus g)= \ran(f \boxtimes g)  &= \ran(f)\cap \ran(g).
\end{align*}

\begin{proposition}[Stability of maximality through operations]\label{pro:sum_maximally_monotone}
Given $f, g \in \mathcal{M}_\uparrow$ (resp. $f, g \in \mathcal{M}_\downarrow$), if $\dom(f)\cap \dom(g)\neq \emptyset$:
\begin{align*}
f + g \in \mathcal{M}_\uparrow \quad \text{(resp. $f + g \in \mathcal{M}_\downarrow$)}
&&\et&&
\ran(f + g) = \ran(f) + \ran(g).
\end{align*}
if, in addition, $\ran(f),\ran(g)\subset \mathbb R_+$:
\begin{align*}
f \cdot g \in \mathcal{M}_\uparrow \quad \text{(resp. $f \cdot g \in \mathcal{M}_\downarrow$)}
&&\et&&
\ran(f \cdot g) = \ran(f) \cdot \ran(g).
\end{align*}
Now, if we rather assume that $\ran(f) \cap \ran(g) \neq \emptyset$, then:
\begin{align*}
f \boxplus g \in \mathcal{M}_\uparrow \quad \text{(resp. $f \boxplus g \in \mathcal{M}_\downarrow$)}
&&\et&&
\dom(f \boxplus g) = \dom(f) + \dom(g).
\end{align*}
and assuming in addition that $\dom(f),\dom(g)\subset \mathbb R_+$ one gets:
\begin{align*}
f \boxtimes g \in \mathcal{M}_\uparrow \quad \text{(resp. $f \boxtimes g \in \mathcal{M}_\downarrow$)}
&&\et&&
\dom(f \boxtimes g) = \dom(f) \cdot \dom(g).
\end{align*}
\end{proposition}

In \cite[Corollary 24.4]{bauschke10convex}, the assumption $\dom(f)\cap\mathring\dom(g)\neq \emptyset$ is required. We see that in $\mathbb R$, it is not necessary to consider the interior of $\dom(g)$ (or of $\dom(f)$).

\begin{remark}\label{rem:use_exponential_for_max_parallel_sum}
One might be tempted to extend the maximality of the parallel sum to the parallel product using the identity, valid for $f, g: \mathbb{R} \to 2^{\mathbb{R}}$ with $\dom(f), \dom(g) \subset \mathbb{R}_+^*$:
\begin{align}\label{eq:exp_parallel_plus2times}
f \boxtimes g
&= \left( (f \circ \exp \circ \log)^{-1} \cdot (g \circ \exp \circ \log)^{-1} \right)^{-1} \nonumber \\
&= \left( \exp \circ (f \circ \exp)^{-1} \cdot \exp \circ (g \circ \exp)^{-1} \right)^{-1}  
= \left( f \circ \exp \boxplus g \circ \exp \right) \circ \log,
\end{align}
where $f \circ \exp \circ \log$ represents $f$ restricted to $\mathbb{R}_+^*$. However, although Proposition~\ref{pro:composition_maximally_monotone} will later ensure that $f \circ \exp \boxplus g \circ \exp$ is maximally monotone when $f$ and $g$ are maximal,
composition with $\log$ does not preserve maximality\footnote{Indeed, $\gra(\exp \circ \log) = \{(x,x): x>0\}$ extends to $\gra(\id) = \{(x,x): x\in \mathbb R\}$, so $\exp \circ \log$ is not maximally monotone by definition.}.
Moreover, the condition $\dom(f), \dom(g) \subset \mathbb{R}_+^*$
is often too restrictive; in applications, assuming $\dom(f), \dom(g) \subset \mathbb{R}_+$ is more natural.
\end{remark}
With the purpose of presenting arguments that work both for sums and products,
we choose to reprove the maximality of sum from scratch and not rely on \cite[Corollary 24.4]{bauschke10convex}.
The proof of Proposition~\ref{pro:sum_maximally_monotone} relies on the following two lemmas of independent interest.

\begin{lemma}\label{lem:union_of_ran_interval}
Given $I\subset\mathbb{R}$, a closed interval and $m,M:I\to\mathbb{R}$ two nondecreasing (scalar-valued) functions, if $\forall x,y\in I$:
\begin{align*}
\lim_{\genfrac{}{}{0pt}{2}{u\uparrow x}{u\in I}} m(u) = m(x)\le M(x)=\lim_{\genfrac{}{}{0pt}{2}{u\downarrow x}{u\in I}} M(u)
&&\et&&
x<y\implies M(x)\le m(y),
\end{align*}
then the set $U\equiv \bigcup_{x\in I}[m(x),M(x)]$ is an interval.
\end{lemma}

\begin{proof}
Let us consider $s,t\in U$ such that $s<t$ and choose $x,y\in I$ with $s\in[m(x),M(x)]$ and $t\in[m(y),M(y)]$. We can assume without loss of generality that $x\le y$. 
Given $r\in(s,t)$, consider the sets
\[
A_r\equiv \{z\in I:\ m(z)\le r\},\qquad
B_r\equiv \{z\in I:\ M(z)\ge r\},
\]
and define $a\equiv \sup A_r$, $b\equiv \inf B_r$ (the two sets are nonempty since $x\in A_r$ and $y\in B_r$).
Thanks to our hypotheses, $m(a) = \lim_{\genfrac{}{}{0pt}{2}{z\uparrow a}{z\in A_r}} m(z)\leq r$, thus $a\in A_r$ and, for the same reasons, $b\in B_r$. If $a=b$, then $r\in[m(a), M(a)]\subset U$. If $a<b$, pick $z\in I$ with $a<z<b$ and by definition of $a$ and $b$, we would have $M(z)<r<m(z)$ which contradicts $m(z)\le M(z)$. The last case is $b< a$, which implies, by hypothesis, $r\leq M(b)\leq m(a) \leq r$, and therefore $r = M(b) = m(a)\in [m(a), M(a)]\subset U$.
\end{proof}
The second preliminary lemma is provided without proof since it simply relies on the connectedness of intervals and a mere comparison of their bounds.
\begin{lemma}\label{lem:when_mink_sum_equal_R}
Given four intervals $I,J,K,L\subset \mathbb R$, if we assume that $K\cap L\neq \emptyset$, $I + K = \mathbb R$ and $J + L = \mathbb R$ then $I+J + K\cap L = \mathbb R$.
\end{lemma}

\begin{proof}[Proof of Proposition~\ref{pro:sum_maximally_monotone}]
The main difficulty is to show that $\ran(f+g) = \ran(f) +\ran(g)$. Let us first show that $\ran(f + g)_+ = \ran(f)_+ + \ran(g)_+$ (the case for $f,g\in\mathcal{M}_\downarrow$ and lower sets is symmetric). The inclusion $\ran(f + g)_+ \subset \ran(f)_+ + \ran(g)_+$ is immediate. To show the converse inclusion, consider $y \in \ran(f)_+ + \ran(g)_+$ and $x_1, x_2$ such that $y \in f(x_1)_+ + g(x_2)_+$. There are two cases.

If $\dom(f + g)=\dom(f)\cap \dom(g)$ is unbounded below. If, say, $x_1 \leq x_2$, then the monotonicity of $g$ yields $g(x_2)_+\subset g(x_1)_+$ and $y \in f(x_1)_+ + g(x_1)_+ \subset \ran(f + g)_+$.

Let us now assume that $\inf \dom(f + g) = a > -\infty$ and, say, $a = \inf\dom(f)$. If $a\in \dom(f)\cap \dom(g)$ then the maximality of $f$ implies with Proposition~\ref{pro:characterization_maximality_R} that $f(a)$ is unbounded from below and consequently that $(f+g)(a)_+ = \mathbb R\ni y$. If $a\notin \dom(f)\cap \dom(g)$, still, $\dom(f)\cap\dom(g)$ being an interval, there exists $a_+$ such that $(a,a_+)\subset \dom(f)\cap\dom(g)$, and since, again, $\inf\ran(f)=-\infty$ we know that there exists $x\in (a,a_+)$ and $z\in f(x)+g(x)\leq f(x)+g(a_+)$ such that $z\leq y$ or in other words $y\in \ran(f + g)_+$.

Once the two identities
\begin{align}\label{eq:ran_f_g_p_m}
\ran(f + g)_+ = \ran(f)_+ + \ran(g)_+
&&\et&&\ran(f + g)_- = \ran(f)_- + \ran(g)_-
\end{align}
are proven, one still need to show that $\ran(f+g)$ is an interval to be able to conclude that $\ran(f+g) = \ran(f)+\ran(g)$.

For that, introduce the mappings $m,M:\dom(f)\cap \dom(g)\to \mathbb R \cup \{\pm \infty\}$ defined as:
\begin{align*}
m:x\mapsto \inf f(x) +g(x)
&&\et &&
M:x \mapsto \sup f(x) +g(x).
\end{align*}
then we have $\ran(f+g) = \bigcup_{x\in \dom(f+g)} [m(x), M(x)]$.

The monotonicity of $f,g$ allows us to deduce that $\forall x,y\in \dom(f)\cap \dom(g)$:
\begin{align*}
m(x)\leq M(x)
&&\et&&
x<y\quad \Longrightarrow \quad M(x)\leq m(y).
\end{align*}
(since $f,g\in\mathcal M_\uparrow$, one has $\sup f(x)\le \inf f(y)$ and $\sup g(x)\le \inf g(y)$, hence $\sup(f(x)+g(x))\le \inf(f(y)+g(y))$).
Besides, the closedness of $\gra(f)$ and $\gra(g)$ given by Proposition~\ref{pro:ran_dom_convex} and the monotonicity of $f,g$ imply that $(x,m(x)) = \lim_{u\uparrow x} (u,m(u))$ and $(x,M(x)) = \lim_{u\downarrow x} (u,M(u))$. All the hypotheses of Lemma~\ref{lem:union_of_ran_interval} are satisfied, one can deduce that $\ran(f+g)$ is an interval and therefore that $\ran(f+g) = \ran(f) +\ran(g)$ thanks to \eqref{eq:ran_f_g_p_m}.

One can show the same way that $\ran(f+g+\id)$ is an interval and the hypotheses combined with Lemma~\ref{lem:when_mink_sum_equal_R} imply:
\begin{align*}
\ran(f+g) + \dom(f+g) = \ran(f)+\ran(g) + \dom(f)\cap \dom(g) = \mathbb R,
\end{align*}
which allows us to deduce from Proposition~\ref{pro:characterization_maximality_R} that $f+g$ is maximal.

The result on $f\boxplus g$ is a simple consequence of the result on the standard sum replacing $f$ and $g$ with $f^{-1}$ and $g^{-1}$ and relying on Example~\ref{ex:max_mon_operator}, Item~\ref{itm:inverse_maximal}. The product is treated with similar arguments noticing that Lemmas~\ref{lem:when_mink_sum_equal_R} and~\ref{lem:union_of_ran_interval} translate smoothly to result on Minkowski product and that the mappings $m,M:\dom(f)\cap \dom(g)\to \mathbb R \cup \{\pm \infty\}$ defined as $m:x\mapsto \inf f(x) \cdot g(x)$ and $M:x \mapsto \sup f(x) \cdot g(x)$ satisfy the required properties.
\end{proof}

Let us provide then a possibly more expressive reformulation of the first results of Proposition~\ref{pro:sum_maximally_monotone}. For simplicity, we introduce the notation $\emptyset:\mathbb R \to 2^{\mathbb R}$ defining the operator:
\begin{align*}
\forall x\in \mathbb R:\qquad \emptyset(x) = \emptyset.
\end{align*}

\begin{corollary}\label{cor:operation_maximally_monotone}
Given $f, g\in \mathcal M_\uparrow$ (resp. $f, g\in \mathcal M_\downarrow$):
\begin{align*}
f\boxplus g\in \mathcal M_\uparrow \ \text{(resp. $\in \mathcal M_\downarrow$)} \ \ &\Longleftrightarrow f\boxplus g \neq \emptyset.
\end{align*}
and, if we assume, in addition, that $\dom(f), \dom(g)\subset \mathbb R_+$:
\begin{align*}
f\boxtimes g\in \mathcal M_\uparrow \ \text{(resp. $\in \mathcal M_\downarrow$)} \ \ \Longleftrightarrow f\boxtimes g \neq \emptyset.
\end{align*}
\end{corollary}

We can now turn the inclusion properties of distribution of the parallel product over parallel sum introduced in \eqref{eq:distribution_prod_incl} into equalities.
\begin{proposition}[Distributivity between sum and product under maximality]\label{pro:distributivity_parallel_sum_product}
Given three operators $f, g, h \in \mathcal M_\uparrow$ (resp. $f, g, h \in \mathcal M_\downarrow$) satisfying $\dom(f), \dom(g), \dom(h) \subset \mathbb R_+$:
\begin{align*}
f \boxtimes (g \boxplus h)\neq \emptyset
\ \  \Longrightarrow \qquad
f \boxtimes (g \boxplus h) &= (f \boxtimes g) \boxplus (f \boxtimes h).
\end{align*}
\end{proposition}
\begin{proof}
Corollary~\ref{cor:operation_maximally_monotone} allows us to establish that $f \boxtimes (g \boxplus h)$ is maximally monotone and, trivially, thanks to Proposition~\ref{pro:sum_maximally_monotone} and elementary properties of Minkowski operations on intervals:
\begin{align}\label{eq:dom_equality_dist_parallel_prod_sum}
\dom(f  \boxtimes (g \boxplus h))
&= \dom(f) \cdot (\dom(g) + \dom(h)) \nonumber
= \dom(f) \cdot \dom(g) + \dom(f) \cdot  \dom(h) \\
&= \dom ((f \boxtimes g) \boxplus (f \boxtimes h)).
\end{align}
Thus, $(f \boxtimes g) \boxplus (f \boxtimes h)$ being monotone, the definition of maximally monotone operators allows to deduce equality from the graph inclusion $\gra(f \boxtimes (g \boxplus h)) \subset \gra((f \boxtimes g) \boxplus (f \boxtimes h))$ given by \eqref{eq:distribution_prod_incl} and~\eqref{eq:dom_equality_dist_parallel_prod_sum}.
\end{proof}

Let us now look at the stability of maximality through composition.
\begin{proposition}[Maximality of composition]\label{pro:composition_maximally_monotone}
Given two maximally monotone operators $f, g: \mathbb{R} \to 2^{\mathbb{R}}$ such that $\ran(f) \cap \dom(g) \neq \emptyset$ and $g\circ f$ monotone, we have the two equivalences:
\begin{align*}
&\text{if $g\circ f$ is nondecreasing:} \qquad
&\dom(f) + \ran(g) &= \mathbb{R}
\quad \Longleftrightarrow  \quad
g \circ f \in \mathcal M_\uparrow,\\
&\text{if $g\circ f$ is nonincreasing:} \qquad
&\dom(f) - \ran(g) &= \mathbb{R}
\quad \Longleftrightarrow \quad
g \circ f \in \mathcal M_\downarrow.
\end{align*}
\end{proposition}
The assumption that $g\circ f$ is monotone is crucial to apply Minty’s theorem. Its importance can be easily checked considering $f:\mathbb R\to 2^{\mathbb R}$ satisfying $f(0) = \mathbb R_+$, $f((0,\infty)) = \{0\}$ and $g = -f$, one has indeed:
\begin{align*}
\dom(f\circ g)=\mathbb R_+
&&\et&&
\forall x\in \mathbb R_+:\quad f\circ g(x) = \mathbb R_+.
\end{align*}
This proposition provides an interesting side result.
\begin{corollary}\label{cor:side_result_compsition}
Given two maximally monotone operators $f, g: \mathbb{R} \to 2^{\mathbb{R}}$ such that $g\circ f$ is monotone, we have the equivalence:
\begin{align*}
\left\{\begin{aligned}
\ran(f) \cap \dom(g) &\neq \emptyset\\
\dom(f) +\ran(g) &= \mathbb R
\end{aligned}\right.
&&\Longleftrightarrow&&
f^{-1}(\dom(g)) + g(\ran(f)) = \mathbb R.
\end{align*}
\end{corollary}
\begin{proof}
The implication is obvious from Proposition~\ref{pro:characterization_maximality_R} since $\dom(g\circ f) = f^{-1}(\dom(g))$ and $\ran(g\circ f) = g(\ran(f))$. The converse implication relies on both Proposition~\ref{pro:characterization_maximality_R} and~\ref{pro:composition_maximally_monotone} noticing that $\ran(g\circ f + \id)$ is an interval thanks to Lemma~\ref{lem:union_of_ran_interval} and the fact that $\forall x\in \mathbb R$, $g\circ f(x)$ is an interval thanks to Proposition~\ref{pro:ran_dom_convex}.
\end{proof}
The proof of Proposition~\ref{pro:composition_maximally_monotone} relies on the following identity for ranges.
\begin{lemma}\label{lem:ran_f_I}
Given three operators $f, g, h : \mathbb R \to 2^{\mathbb R}$:
\begin{align*}
\ran(g \circ h + f) = \ran(g + f\circ h^{-1}).
\end{align*}
\end{lemma}
\begin{proof}
We have the chain of equivalences:

\vspace{0.3cm}
$\begin{aligned}[b]
\hspace{1cm}&z \in \ran(g \circ h + f)\\
&\hspace{0.3cm}\ \Longleftrightarrow \
\exists y\in \mathbb R, \exists x\in h(y) \ \text{s.t.} \ z \in g(x) + f(y)\\
&\hspace{0.3cm}\ \Longleftrightarrow \
\exists x\in \mathbb R, \exists y\in h^{-1}(x) \ \text{s.t.} \ z \in g(x) + f(y)
\ \Longleftrightarrow \
z \in \ran(g + f\circ h ^{-1}).
\end{aligned}$
\end{proof}

\begin{proof}[Proof of Proposition~\ref{pro:composition_maximally_monotone}]
Let us do the proof in the case $g \circ f$ nondecreasing. By Lemma~\ref{lem:ran_f_I} and Proposition~\ref{pro:sum_maximally_monotone}:
\begin{align*}
\ran(g \circ f + \id) = \ran(f^{-1} + g)
= \ran(f^{-1}) + \ran(g)
= \dom(f) + \ran(g).
\end{align*}
The result is thus simply an application of Theorem~\ref{pro:minty_theorem} to $g\circ f$. The result in the case $g\circ f$ nonincreasing relies on the symmetric identity $\ran(g \circ f - \id) = \ran(g) - \ran(f^{-1}) = \ran(g)-\dom(f) $.
\end{proof}

\begin{proposition}[Distributivity with composition under maximality]\label{pro:distribution_of_composition}
Given three operators $f, g, h : \mathbb R \to 2^{\mathbb R}$, if $f,g,h\in \mathcal M_\uparrow$ or $f,g,h\in \mathcal M_\downarrow$ (resp. $f\in \mathcal M_\downarrow$ and $g,h\in \mathcal M_\uparrow$ or $f\in \mathcal M_\uparrow$ and $g,h\in \mathcal M_\downarrow$) and if $\ran(f) + \dom(g) + \dom(h) = \mathbb R$ (resp. $\ran(f) - \dom(g)  - \dom(h) = \mathbb R$) then:
\begin{align*}
f\circ (g\boxplus h)\neq \emptyset\ \  \Longrightarrow \qquad
f \circ (g \boxplus h) &= (f \circ g) \boxplus (f \circ h).
\end{align*}
If we assume in addition that $\dom(g), \dom(h)\subset \mathbb R_+$:
\begin{align*}
f\circ (g\boxtimes h)\neq \emptyset\ \  \Longrightarrow \qquad
f \circ (g \boxtimes h) &= (f \circ g) \boxtimes (f \circ h).
\end{align*}
\end{proposition}
\begin{remark}\label{rem:distribution_of_composition}
Note that the hypothesis of Proposition~\ref{pro:distribution_of_composition} impose $g\boxplus h$ and $f\circ (g\boxplus h)$ (resp. $g\boxtimes h$ and $f\circ (g\boxtimes h)$) to be maximally monotone but it is possible that $f \circ g$ or $f \circ h$ are not maximally monotone. For instance, consider the case $f = \exp$, $g=\id$ and\footnote{Here $h=\delta^{-1}$ denotes the inverse of the singleton-valued operator 
$\delta : x \mapsto \{\delta\}$, so that $h(x)=\emptyset$ if $x\ne\delta$ 
and $h(\delta)=\mathbb R$.} $h = \delta^ {-1}$, for a certain $\delta\in \mathbb R$. Then one still has:
\begin{align*}
f\circ (g\boxplus h) =(f \circ g) \boxplus (f \circ h) : t \mapsto \exp(t-\delta).
\end{align*}
However note that $f\circ h = \exp \circ \delta^{-1}$ is not maximally monotone: it satisfies $f\circ h(x) = \emptyset$ if $x\neq \delta$ and $f\circ h(\delta) = \mathbb R_+^*$; we see that $\ran(f\circ h) + \dom(f\circ h) = \mathbb R_+^ * + {\delta}\neq \mathbb R$ which contradicts Proposition~\ref{pro:characterization_maximality_R}.
\end{remark}
\begin{proof}[Proof of Proposition~\ref{pro:distribution_of_composition}]
Without loss of generality, we will only prove the result concerning the parallel sum in the case $f,g,h\in \mathcal M_\downarrow$. The proof follows the same strategy as the proof of Proposition~\ref{pro:distributivity_parallel_sum_product}. The assumption $f\circ (g\boxplus h)\neq \emptyset$ yields $g\boxplus h\neq \emptyset$ which implies that $g\boxplus h$ is maximally monotone by Corollary~\ref{cor:operation_maximally_monotone}. Then Proposition~\ref{pro:composition_maximally_monotone} implies that $f\circ (g \boxplus h)$ is maximally monotone, since
\begin{align*}
\ran(f) + \dom (g\boxplus h) = \ran(f) + \dom(g) + \dom(h) = \mathbb R.
\end{align*}
Again one can conclude from \eqref{eq:distribution_comp_incl}, monotonicity of $(f \circ g) \boxplus (f \circ h)$ and by definition of maximality that $f\circ (g\boxplus h) = (f \circ g) \boxplus (f \circ h)$.
\end{proof}
Further note that the parallel sum (resp. parallel product) distributes with composition with translation (resp. homothety) \textit{on the right}. This result is independent of the rest of the subsection and can be proven by basic operations that we skip here.
\begin{lemma}\label{lem:composition_translation}
Given two operators $f, g : \mathbb R \to 2^{\mathbb R}$ and two scalars $\lambda, \delta \in \mathbb R$:
\begin{align*}
(f \circ (\id + \delta)) \boxplus (g \circ (\id + \lambda)) &= (f \boxplus g) \circ (\id + \lambda + \delta)
\quad &&\left( = (f \circ (\id + \lambda)) \boxplus (g \circ (\id + \delta)) \right), \\
(f \circ (\delta \cdot \id)) \boxtimes (g \circ (\lambda \cdot \id)) &= (f \boxtimes g) \circ (\lambda \delta \cdot \id)
\quad &&\left( = (f \circ (\lambda \cdot \id)) \boxtimes (g \circ (\delta \cdot \id)) \right).
\end{align*}
\end{lemma}

\subsection{Pointwise Resolvent Order and characterizations}\label{sub:pointwise_resolvent_order}

Because our central objective is to derive concentration inequalities for the survival function, we naturally need to introduce an order relation between operators.
For this, we rely on resolvents (see Definition~\ref{def:resolvent}), which allow us to construct the pointwise order from the natural order relation on the class of nondecreasing $1$-Lipschitz scalar-valued functions (see Proposition~\ref{pro:correspondence_resolvent}). Be careful that this definition is different from the so-called ``resolvent order'' presented in \cite{ba2017resolvent_order}.
\begin{definition}[Pointwise Resolvent Order]\label{def:resolvent_pw_order}
We define the pointwise resolvent order on $\mathcal{M}_\uparrow$ and $\mathcal{M}_\downarrow$ by:
\begin{align*}
  f \leq g \quad\Longleftrightarrow\quad
  \begin{cases}
    \forall x \in \mathbb{R}: J_f(x) \geq J_g(x) & \text{if $f, g \in \mathcal{M}_\uparrow$}, \\
    \forall x \in \mathbb{R}: J_f(x) \leq J_g(x) & \text{if $f, g \in \mathcal{M}_\downarrow$}.
  \end{cases}
\end{align*}
We naturally write $f \geq g$ when $g \leq f$.
\end{definition}
The reversal in the definition for nonincreasing operators is motivated by the following results on inverses (some of which were already presented in \eqref{eq:relationJf_Jfm1}).
\begin{lemma}\label{lem:formula_J_f_m1}
If $f \in \mathcal{M}_\uparrow$: $J_{f^{-1}} = \id - J_f$, if $f \in \mathcal{M}_\downarrow$: $J_{f^{-1}} = J_f\circ (-\id) + \id$.
\end{lemma}
\begin{lemma}\label{lem:swapping_nonincreasing}
\begin{minipage}[t]{8cm}
\begin{itemize}
  \item If $f, g \in \mathcal{M}_\uparrow$: \qquad $f \leq g \quad \Longleftrightarrow \quad f^{-1} \geq g^{-1}$.
  \item If $f, g \in \mathcal{M}_\downarrow$: \qquad $f \leq g \quad \Longleftrightarrow \quad f^{-1} \leq g^{-1}$.
\end{itemize}
\end{minipage}
\end{lemma}
\begin{lemma}\label{lem:properties_pw_res_order}
Given $f,g,h \in \mathcal{M}_\uparrow$ (resp. $f,g,h \in \mathcal{M}_\downarrow$):
  \begin{itemize}
    \item $f\leq f$
    \item $f\leq g\leq h \quad \Longrightarrow \quad f\leq h$,
    \item $f\leq g \ \text{and} \ g\leq f \quad \Longrightarrow \quad f = g$,
  \end{itemize}
\end{lemma}

The following alternative characterization shows that—after the appropriate ordering of domains—the resolvent order naturally extends pointwise scalar inequalities for single-valued functions to pointwise interval inequalities for general set-valued operators.
We refer to this as the ``intermediate'' characterization, since a stronger and a weaker version will be given later in Propositions \ref{pro:charac_res_order_inequ} and \ref{pro:characterization_resolvent_order_2}, respectively.
\begin{proposition}[Intermediate characterization of Resolvent Order]\label{pro:characterization_resolvent_order}
  Given $f,g\in \mathcal{M}_\uparrow$ (resp. $f,g\in \mathcal{M}_\downarrow$), $f\leq g$ if and only if:
  \begin{align}\label{eq:charact_resolvent_pw_g_y_p}
    \left\{
  \begin{aligned}
    &\dom(g) \leq \dom(f) \quad \text{ (resp. $\dom(f) \leq \dom(g)$)}\\
    &\forall y\in \dom(f) \cap \dom(g):\quad g(y) \subset f(y)_+.
  \end{aligned}\right.
  \end{align}
\end{proposition}
We can directly deduce from this proposition the following more intuitive (but also stronger) characterization. 
\begin{proposition}[Strong characterization of Resolvent Order]\label{pro:charac_res_order_inequ}
  Given $f,g\in \mathcal{M}_\uparrow$ (resp. $f,g\in \mathcal{M}_\downarrow$), $f\leq g$ if and only if:
  \begin{align}\label{eq:charact_inequality}
    \left\{
  \begin{aligned}
    &\dom(g) \leq \dom(f) \quad \text{ (resp. $\dom(f) \leq \dom(g)$)}\\
    &\forall y\in \dom(f) \cap \dom(g):\quad f(y) \leq g(y) .
  \end{aligned}\right.
  \end{align}
\end{proposition}
\begin{proof}
The implication $\eqref{eq:charact_inequality}\implies \eqref{eq:charact_resolvent_pw_g_y_p}$ is trivial, therefore, we directly assume that $f\leq g$. 
We already know from Proposition~\ref{pro:characterization_resolvent_order} that $\forall y \in \dom(f)\cap \dom(g)$, $g(y)\subset f(y)_+$.
Besides, the resolvent characterization provided in Definition~\ref{def:resolvent_pw_order} yields $-g\circ-\id\leq -f\circ -\id$ and then Proposition~\ref{pro:characterization_resolvent_order} yields that for all $y\in -\dom(f) \cap -\dom(g)$, $-f(-y) \subset -g(-y)_+\implies f(-y) \subset g(-y)_-$. That means that $\forall y \in \dom(f)\cap \dom(g)$, $f(y)\subset g(y)_-$, added to the fact that $g(y)\subset f(y)_+$, that exactly means that $f(y)\leq g(y)$.
\end{proof}

The proof of Proposition~\ref{pro:characterization_resolvent_order} relies strongly on the maximality properties of $f$ and $g$ through Minty's theorem.
\begin{proof}[Proof of Proposition~\ref{pro:characterization_resolvent_order}]
We assume, without loss of generality, that $f,g\in \mathcal{M}_{\uparrow}$.
Assuming $f\leq g$, naturally $J_g \leq J_f$ and we know from \eqref{eq:range_resolvent} that:
  \begin{align*}
    \dom(g) = \ran(J_g) \leq \ran(J_f) = \dom(f).
  \end{align*}
  Besides, given $y\in \dom(f) \cap \dom(g)$ and $x\in g(y)$ we know that:
  \begin{itemize}
     \item $x+y\in y+ g(y)$ and therefore $y = J_g(x+y)$
     \item $\exists u\in \mathbb{R}$ such that $J_f(u)=y$ (since $y\in\dom(f) \cap \dom(g) \subset \ran(J_f)$), therefore $u\in y+f(y)$.
   \end{itemize}
   Now, from $f\leq g$, we know that $y = J_g(x+y)\leq J_f(x+y)$. 
   If $J_f(x+y)=y$, then $x +y\in y + f(y)$, so $x \in f(y)$.
   If $J_f(x+y)>y=J_f(u)$, the nondecreasing monotonicity of $J_f$ implies $u<x+y$ and therefore $x> u-y\in f(y)$.
   We check that in both cases $x \in f(y)_+$, this being set for any $x\in g(y)$, that implies $g(y)\subset f(y)_+$.

Let us then assume \eqref{eq:charact_resolvent_pw_g_y_p}. Considering $x\in \mathbb{R}$, we know that $y \equiv J_g(x) \in\dom(g)$.
  
  If $y \in \dom(f)$, $x\in y + g(y)\subset y + f(y)_+$ by hypothesis, and, consequently, there exists $u\leq x$ such that $u\in y+f(y)$ and therefore, since $J_f$ is increasing, we have $J_f(x)\geq J_f(u) = y = J_g(x)$.
  
  If $y \notin \dom(f)$, then we know from the hypothesis $\dom(g) \leq \dom(f)$ and from \eqref{eq:range_resolvent} that $J_g(x) = y \leq\inf \dom(f) = \inf\ran(J_f) \leq J_f(x)$.
  
  In both cases, we have exactly proved that $J_f\geq J_g$ or, in other words, that $f\leq g$.
\end{proof}

This characterization of pointwise resolvent order, easily yields stability properties under composition and addition. To set a result as strong as possible, let us first define restriction of operators on intervals.
\begin{definition}[Restriction of maximally monotone operators]\label{def:maximally_monotone_restriction}
  Given a maximally monotone operator $f:\mathbb R\to 2^{\mathbb R}$ and an interval $A\subset \mathbb R$, the restriction $\restrict{f}{A}$ is defined as being $\emptyset$ if $A\cap \dom(f) =\emptyset$ and otherwise as the maximally monotone operator such that $\dom(\restrict{f}{A} ) = \bar A\cap \dom(f)$ and $\forall x\in \mathring A, \restrict{f}{A} (x) = f(x)$. Note that $\restrict{f}{A}=\restrict{f}{\mathring A}=\restrict{f}{\bar A}$.
 \end{definition}
Note that given $f,g\in \mathcal M_{\uparrow}$ (resp. $f,g\in \mathcal M_{\downarrow}$), since $\dom(f+g) = \dom(f)\cap \dom(g)$:
\begin{align*}
  f+g = f+\restrict{g}{\dom(f)} =  \restrict{f}{\dom(g)} + g.
\end{align*}
and, naturally, for any interval $A\subset \mathbb R$:
\begin{align*}
  f\leq g &&\Longrightarrow && \restrict{f}{A} \leq \restrict{g}{A}.
\end{align*}
\begin{lemma}\label{lem:stability_inequality_composition}
  Considering $f,g,h \in \mathcal{M}_\uparrow$ (resp. $f,g,h \in \mathcal{M}_\downarrow$) such that $\dom(f)\cap \ran(h) \neq \emptyset$, $\dom(g)\cap \ran(h) \neq \emptyset$ and $\ran(f)+\dom(h) = \ran(g)+\dom(h) = \mathbb R$, we have the implication:
  \begin{align*}
    \restrict{f}{\ran(h)}\leq \restrict{g}{\ran(h)}
    &&\Longrightarrow&&
    f\circ h \leq g\circ h.
  \end{align*}
\end{lemma}
\begin{proof}
  First recall from Proposition~\ref{pro:composition_maximally_monotone} that $f\circ h$ and $g\circ h$ are both maximally monotone.
  Applying the characterization \eqref{eq:charact_inequality} in the case, say, $f,g \in \mathcal M_\uparrow$. Note that $\dom(f\circ h) = h^{-1}(\dom(f))$ and the nondecreasing character of $h^{-1}$ implies that for all interval $I\subset \mathbb R$ $h^{-1}(I_+)_+ = h^{-1}(I)_+$ and consequently: 
  \begin{align*}
    \dom(f\circ h)_+ 
    &= h^{-1}(\dom(f))_+
    = h^{-1}(\dom(\restrict{f}{\ran(h)}))_+\\
    &= h^{-1}(\dom(\restrict{f}{\ran(h)})_+)_+
    \subset h^{-1}(\dom(\restrict{g}{\ran(h)})_+)_+
    = h^{-1}(\dom(g))_+.
  \end{align*}
  Similarly, $\dom(g\circ h)_-
    \subset h^{-1}(\dom(f))_-$. That allows us to conclude that $\dom(f\circ h)\leq \dom(g\circ h)$. Besides, $\forall x\in \dom(f\circ h)\cap \dom(g\circ h)$, for all $y\in h(x)$, $f(y)\leq g(y)$ by hypothesis, which naturally implies $f(h(x))\leq g(h(x))$.
    \end{proof}

\begin{lemma}\label{lem:prop_sum_stable_res_order}
  Let us consider $f,g,h \in \mathcal{M}_\uparrow$ (resp. $f,g,h \in \mathcal{M}_\downarrow$). 

  If $\dom(f) \cap \dom(h) \neq \emptyset$ and $\dom(g) \cap \dom(h) \neq \emptyset$:
  \begin{align*}
    f\leq g 
    \qquad \Longrightarrow \qquad 
    f+h\leq g+h.
    \qquad \Longrightarrow \qquad 
    \restrict{f}{\dom(h)}\leq \restrict{g}{\dom(h)}.
  \end{align*}

  If $\ran(f) \cap \ran(h) \neq \emptyset$ and $\ran(g) \cap \ran(h) \neq \emptyset$:
  \begin{align*}
    f\leq g \qquad \Longrightarrow \qquad f\boxplus h\leq g\boxplus h;
  \end{align*}
  and if, in addition, $\dom(f), \dom(g), \dom(h)\subset \mathbb R_+$
  \begin{align*}
    f\leq g \qquad \Longrightarrow \qquad f\boxtimes h\leq g\boxtimes h.
  \end{align*}
\end{lemma}
\begin{proof}
  We will rely here on the characterization given in Proposition~\ref{pro:characterization_resolvent_order}. Let us assume $f,g,h \in \mathcal{M}_\uparrow$:
  \begin{align*}
    &\left\{\begin{aligned}
      &\forall y\in \dom(f) \cap \dom(g):\quad g(y) \subset f(y)_+\\
      &\dom(g)\leq \dom(f)
    \end{aligned}\right. 
    \quad 
    &&(\Leftrightarrow \ \ f\leq g)\nonumber\\
    &\implies
    \left\{\begin{aligned}
      &\forall y\in \dom(f) \cap \dom(g) \cap \dom(h):\\ 
      &\hspace{2cm} g(y) + h(y) \subset (f(y) + h(y))_+\\
      &\dom(g)\cap \dom(h)\leq \dom(f)\cap \dom(h)
    \end{aligned}\right.\quad 
    &&(\Leftrightarrow \ \ f+h\leq g+h)\nonumber\\
    &\implies 
    \left\{\begin{aligned}
      &\forall y\in \dom(f) \cap \dom(g) \cap \mathring\dom(h):\\ 
      &\hspace{2cm} g(y) \subset f(y)_+\\
      &\dom(g)\cap \overline{\dom}(h)\leq \dom(f)\cap \overline{\dom}(h)
    \end{aligned}\right.
    &&(\Leftrightarrow \ \ \restrict{f}{\dom h}\leq \restrict{g}{\dom h}).\nonumber
  \end{align*}
\end{proof}

As a simple corollary of the reflexivity of pointwise resolvent order and Lemma~\ref{lem:prop_sum_stable_res_order}, one gets this non-obvious result when it comes to set-valued operators (maximality is an important element here).
\begin{corollary}\label{cor:equality_through_sum}
  Given $f,g,h \in \mathcal M_\uparrow$  (resp. $f,g,h \in \mathcal M_\downarrow$):
  \begin{align*}
    \restrict{f}{\dom(h)}= \restrict{g}{\dom(h)} \qquad \Longleftrightarrow \qquad f+h = g+h.
  \end{align*}
\end{corollary}
Lemma~\ref{lem:prop_sum_stable_res_order} takes as assumption that $\dom(f)$ and $\dom(g)$ both intersect $\dom(h)$, however, when it is not the case, one can simply conclude from the trivial identity $ \restrict{f}{\dom(h)}= \restrict{g}{\dom(h)} = f+h = g+h = \emptyset$.

Be careful that $\restrict{f}{\dom(h)}= \restrict{g}{\dom(h)}$ only means that $\forall x\in \mathring \dom(h)$, $f(x) = g(x)$ but it is possible that the images are different on the boundaries of $\dom(h)$. For instance $\restrict{0}{\mathbb R_+}(0) = \mathbb R_+\neq 0(0) = \{0\}$.

We finally add a weaker characterization of pointwise resolvent order than the one given by Proposition~\ref{pro:characterization_resolvent_order}. 
\begin{proposition}[Weak characterization of Resolvent Order]\label{pro:characterization_resolvent_order_2}
  Given $f,g\in \mathcal M_\uparrow$ (resp. $f,g\in \mathcal M_\downarrow$), $f\leq g$ if and only if:
  \begin{align}\label{eq:characterization_resolveny_order_2}
  \left\{
  \begin{aligned}
    &\dom(g) \leq \dom(f) \quad \text{ (resp. $\dom(f) \leq \dom(g)$)}\\
    &\forall y\in \dom(f) \cap \dom(g):\quad g(y) \cap f(y)_+ \neq \emptyset.
  \end{aligned}\right.
  \end{align}
\end{proposition}
This characterization provides a straightforward way to establish inequalities for the operator survival function.
\begin{corollary}\label{cor:inequality_on_survival}
  Given a random variable $X\in \mathbb R$, and an operator $\alpha\in \mathcal M_{\downarrow}$, 
  such that $\dom(\alpha)_+= \dom(\alpha)$, one has the implication:
  \begin{align*}
    \forall t \in \dom(\alpha): \mathbb P(X>t)_+ \cap \alpha(t) \neq \emptyset \qquad \Longrightarrow \qquad S_X\leq \alpha.
  \end{align*}
\end{corollary}
\begin{proof}
Relying on the characterization of pointwise resolvent order given by Proposition~\ref{pro:characterization_resolvent_order_2} for maximally \textit{nonincreasing} operators, let us simply note first that $\dom(\alpha) \geq \mathbb R=\dom S_X$ and second that since $\mathbb P(X>t)_+ = S_X(t)_+$:
\[
  \mathbb P(X>t)_+ \cap \alpha(t) \neq \emptyset
  \qquad \Longrightarrow \qquad
  S_X(t)_+ \cap \alpha(t) \neq \emptyset.
\]
\end{proof}
To prove Proposition~\ref{pro:characterization_resolvent_order_2}, we will rely on the following topological result on maximally monotone operators' graphs:
\begin{theorem}[Kenderov, \cite{bauschke10convex}, Theorem 21.22]\label{the:dense_subset_function}
  Given a maximally monotone mapping $f: \mathbb R \to 2^{\mathbb R}$, there exists a subset $C\subset \dom(f)$ dense in $\dom(f)$ and such that $\forall x\in C$, $f(x)$ is a singleton.
\end{theorem}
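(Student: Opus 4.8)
The plan is to reduce to the increasing case and then run the classical ``a monotone function has at most countably many jumps'' argument, transported to set-valued operators by means of Proposition~\ref{pro:ran_dom_convex}. Since $-f$ is maximally increasing whenever $f$ is maximally decreasing, and since $-f$ has the same domain as $f$ and is single-valued exactly where $f$ is, we may and do assume that $f$ is maximally increasing. Write $I := \dom(f)$, which is an interval of $\mathbb R$ by Proposition~\ref{pro:ran_dom_convex}, and let $I^\circ$ denote its interior; if $I$ is empty or reduces to a single point the statement is degenerate and should be read trivially (take $C=I$ when $f$ happens to be single-valued at that point), so assume from now on $I^\circ\neq\emptyset$.

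Next I would attach to each interior point a ``jump interval''. For $x\in I^\circ$, Proposition~\ref{pro:ran_dom_convex} gives that $f(x)$ is a closed interval, and Theorem~\ref{the:locally_bounded_inside_domain}, applied at $x\notin\partial\dom(f)$, gives that $f$ is locally bounded at $x$, so $f(x)=[a_x,b_x]$ for some reals $a_x\le b_x$. Set $U_x := (a_x,b_x)$; then $U_x\subset f(x)\subset\ran(f)$, and $U_x=\emptyset$ precisely when $f(x)$ is a singleton. The key observation is that these open intervals are pairwise disjoint: for $x<y$ in $I^\circ$ the monotonicity inequality $(y-x)(v-u)\ge 0$ for $u\in f(x)$, $v\in f(y)$ forces $b_x\le a_y$, so $U_x$ and $U_y$ do not overlap.

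I would then conclude by a counting argument. A family of pairwise disjoint nonempty open intervals of $\mathbb R$ is at most countable (choose a rational point in each), so the set $N := \{x\in I^\circ:\ f(x)\ \text{is not a singleton}\}$ is at most countable. Put $C := I^\circ\setminus N$; by construction $C\subset\dom(f)$ and $f(x)$ is a singleton for every $x\in C$. To see that $C$ is dense in $\dom(f)=I$: since $I$ is a nondegenerate interval, $I^\circ$ is dense in $I$, and any nonempty relatively open subset $V$ of $I$ meets $I^\circ$ in a nonempty open subset of $\mathbb R$, which is uncountable and hence not contained in the countable set $N$; thus $V\cap C\neq\emptyset$, and $C$ is dense in $I$.

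The only genuinely delicate point is the final density step — checking that deleting the countable ``multivalued'' set $N$ from the interior does not destroy density in the whole interval $\dom(f)$, together with the harmless bookkeeping in the degenerate case where $\dom(f)$ has empty interior. Everything else is routine: the reduction to the increasing case, the use of Proposition~\ref{pro:ran_dom_convex} and Theorem~\ref{the:locally_bounded_inside_domain} to know that $f(x)$ is a bounded closed interval and $\dom(f)$ an interval, and the elementary disjoint-open-intervals count.
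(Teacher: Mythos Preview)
The paper does not give its own proof of this theorem: it is simply quoted from \cite{bauschke10convex}, Theorem~21.22, and used as a black box (notably in the proof of Proposition~\ref{pro:concentration_sum_prod}). So there is no ``paper's proof'' to compare against.

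Your argument is correct and is in fact the natural elementary proof in dimension one, much simpler than Kenderov's general Banach-space argument. The reduction to the increasing case, the identification via Proposition~\ref{pro:ran_dom_convex} and Theorem~\ref{the:locally_bounded_inside_domain} of each $f(x)$, $x\in\mathring{\dom(f)}$, with a compact interval $[a_x,b_x]$, the disjointness of the open jump intervals $(a_x,b_x)$ by monotonicity, and the countability-and-density conclusion are all sound. One small caveat: your handling of the degenerate case $\dom(f)=\{x_0\}$ is incomplete, since then $f(x_0)$ may well be all of $\mathbb R$ (e.g.\ $\gra(f)=\{x_0\}\times\mathbb R$ is maximally monotone) and no nonempty $C$ works; strictly speaking the theorem as stated fails there, and the general Kenderov statement is usually phrased as density in $\mathring{\dom(f)}$ rather than in $\dom(f)$. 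This does not affect the paper's applications, which always involve operators with nondegenerate domain.
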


\begin{proof}[Proof of Proposition~\ref{pro:characterization_resolvent_order_2}]
  Relying on Proposition~\ref{pro:characterization_resolvent_order}, note that the implication ``\eqref{eq:charact_resolvent_pw_g_y_p} $\Rightarrow$ \eqref{eq:characterization_resolveny_order_2}'' is trivial. Let us then assume \eqref{eq:characterization_resolveny_order_2} and, say, $f,g\in \mathcal M_\uparrow$. 
  Considering $x \in \dom(f) \cap \dom(g)$,  
  if $x= \min(\dom(f))\in \dom(g)\cap \dom(f)$ then by Proposition~\ref{pro:characterization_maximality_R} (and more precisely, Remark~\ref{rem:}), $f(x)$ is unbounded below and thus $\mathbb R = f(x)_+ \supset g(x)$. If $x = \min(\dom(g))\in \dom(g)\cap \dom(f)$, then inequality $\dom(g)\leq \dom(f)$ implies $x= \min(\dom(f))$ and one can conclude as before. 

  Let us then consider the case $x> \min(\dom(g))$ and $x> \min(\dom(f)) $, Theorem~\ref{the:dense_subset_function} allows us to consider an increasing sequence $(x_n)_{n\in \mathbb N}\in ((-\infty, x) \cap \dom(f) \cap \dom(g))^{ \mathbb N}$ such that $\forall n\in \mathbb N$, $g(x_n)\geq f(x_n)$ are singletons
  and $\lim_{n\to \infty}x_n = x$.
  The monotonicity of $f,g$ implies that the sequences $(f(x_n))_{n\in \mathbb N}, (g(x_n))_{n\in \mathbb N}$ are increasing; in addition, they are bounded from above by, respectively, $\min f(x)$ and $\min g(x)$, therefore, they admit a limit $y_f, y_g\in \mathbb R$. Proposition~\ref{pro:ran_dom_convex} yields $(x,y_f)\in \gra(f)$ and $(x,y_g)\in \gra(g)$, or, in other words, $y_f\in f(x)$ and $y_g\in g(x)$. Now, the inequality $g(x_n) \geq f(x_n)$ transmits to the limit which yields $y_g\geq y_f$. Besides, the monotonicity of $g$ implies that for all $y\in g(x)$, $y\geq g(x_n)$ and, at the limit, $y\geq y_g\geq y_f$, hence $g(x)\subset [y_f,\infty)\subset f(x)_+$.
\end{proof}

\subsection{Minimum and maximum of maximally monotone operators}\label{sub:minimum_and_maximum_of_maximally_monotone_operators}

\begin{definition}[Resolvent min/max]\label{def:minimum_and_maximum_of_maximally_monotone_operators}
Given a finite set of indices $A$ and  a family of nondecreasing $1$-Lipschitz mappings $T = (T_a)_{a\in A}\in \mathcal M_\uparrow^A$, 
let us define $\max T: x \mapsto \max_{a\in A} T_a(x)$ and $\min T: x\mapsto \min_{a\in A} T_a(x)$.

If $f = (f_a)_{a\in A} \in \mathcal M_\uparrow^A$ (resp. $(f_a)_{a\in A} \in \mathcal M_\downarrow^A$), we rely on the correspondence between maximally monotone operators and resolvent to define $\max f$ and $\min f$ as\footnote{That means that for, say, $f\in \mathcal M_\uparrow$: $\max f = (\min_{a\in A} J_{f_a})^{-1} - \id$ and $\min f = (\max_{a\in A} J_{f_a})^{-1} - \id$.}:
\begin{itemize}
  \item $J_{\max f} \equiv \min_{a\in A} J_{f_a}$ (resp. $J_{\max f} \equiv \max_{a\in A} J_{f_a}$),
  \item $J_{\min f} \equiv \max_{a\in A} J_{f_a}$ (resp. $J_{\min f} \equiv \min_{a\in A} J_{f_a}$).
\end{itemize}
\end{definition}

\begin{lemma}[Commutativity and associativity of min/max]\label{lem:basic_prop_min_max}
Given $f,g,h \in \mathcal M_\uparrow$ (resp. $f,g,h \in \mathcal M_\downarrow$): 
\begin{align*}
  \min(f,g) = \min(g,f),
  &&
  \min (f,\min(g,h)) = \min(\min(f,g),h),
\end{align*}
and one can freely replace symbols ``$\min$'' with symbol ``$\max$'' all at once.
\end{lemma}
We next describe how inversion interacts with min and max. The pattern depends on whether the operators are nondecreasing or nonincreasing.

\begin{proposition}[Stability of min/max through inversion]\label{pro:inverse_min_max} Given a finite set $A$:
\begin{align*}
  &\text{If $f \in \mathcal M_\uparrow^A$:}
\qquad
  \bigl(\max f\bigr)^{-1} \;=\; \min_{a\in A} f_a^{-1},
  \qquad
  \bigl(\min f\bigr)^{-1} \;=\; \max_{a\in A} f_a^{-1}.\\
  &\text{If $f\in \mathcal M_\downarrow^A$:}
\qquad
  \bigl(\max f\bigr)^{-1} \;=\; \max_{a\in A} f_a^{-1},
  \qquad
  \bigl(\min f\bigr)^{-1} \;=\; \min_{a\in A} f_a^{-1}.
\end{align*}
\end{proposition}

\begin{proof}
Using the Lemma~\ref{lem:formula_J_f_m1}, if, first, $f\in \mathcal M_\uparrow^A$, one can compute:
\begin{align*}
  J_{(\max f)^{-1}}
  = \id - J_{\max f}
  = \id - \min J_{f}
  = \max_{a} (\id - J_{f_a})
  = \max_{a} J_{f_a^{-1}}
  = J_{\min f^{-1}}.
\end{align*}
Now, if $f\in \mathcal M_\downarrow^A$:
\begin{align*}
  J_{(\max f)^{-1}}
  =  J_{\max f}\circ(-\id) + \id
  = \max_{a} (J_{f_a}\circ(-\id) + \id)
  = \max_{a} J_{f_a^{-1}}
  = J_{\max f^{-1}}.
\end{align*}
Similar identities are valid swapping the symbols ``$\min$'' and ``$\max$''.
\end{proof}

\begin{proposition}[Stability of maximality through min/max]\label{pro:basic_prop_min_max}
Given $(f_a)_{a\in A} \in \mathcal M_\uparrow^A$ (resp. $(f_a)_{a\in A} \in \mathcal M_\downarrow$), $\max f, \min f\in \mathcal M_\uparrow$ (resp. $\max f, \min f \in \mathcal M_\downarrow^A$) and:
\begin{align*}
  \forall a\in A:\qquad \min f\leq f_a\leq \max f.
\end{align*}
\end{proposition}

\begin{proof}
We see from Proposition~\ref{pro:correspondence_resolvent} that it is sufficient to show that the maximum and minimum of the family of $1$-Lipschitz maximally nondecreasing operators $J_f =(J_{f_a})_{a\in A}\in \mathcal M_\uparrow^A$ is also $1$-Lipschitz maximally nondecreasing. That is a mere consequence of the triangle inequality:
\begin{align*}
  \forall x,y\in\mathbb R:\qquad |\,\max_a J_{f_a}(x)-\max_a J_{f_a}(y)\,|
\le \max_a |J_{f_a}(x)-J_{f_a}(y)| \le |x-y|.
\end{align*}

The inequality $\min f\leq f_a\leq \max f$ is a trivial translation of the pointwise inequality on resolvents.
\end{proof}
That leads to a classical characterization of minimum and maximum.

\begin{proposition}\label{pro:characterization_max_min}
  Given a finite set of indices $A$ and  a family of maximally monotone operators $f\in \mathcal M_\uparrow^A$ (resp. $f\in \mathcal M_\downarrow^A$), there exists a unique operator $h\in \mathcal M_\uparrow$ (resp. $h\in \mathcal M_\downarrow$) such that for all $a\in A$, $h\leq f_a$ and:
  \begin{align}\label{eq:charact_min}
    \forall g\in \mathcal M_\uparrow  \ \ \text{(resp. $\forall g\in \mathcal M_\downarrow$)}: \quad 
    (\forall a\in A: g\leq f_a ) \implies g\leq h,
  \end{align}
  this operator $h$ is exactly $\min f$. A symmetric property exists for the maximum.
\end{proposition}
In other words, $\min f$ is the greatest lower bound (infimum) of the family $(f_a)_{a\in A}$ with respect to the pointwise resolvent order.

\begin{proof}
  Let us assume that for all $a\in A$, $h\leq f_a$ and~\eqref{eq:charact_min} in the case of nondecreasing operators. Consider $x, y\in \mathbb R$ satisfying $\forall a\in A: y\geq J_{f_a}(x)$. We have naturally $y\geq \max_{a\in A} J_{f_a}(x)$. The mapping $T_y: t\mapsto \max_{a\in A} J_{f_a}(t) +y-\max_{a\in A} J_{f_a}(x)$ is $1$-Lipschitz, maximally nondecreasing and satisfies $\dom(T_y) = \mathbb R$. Proposition~\ref{pro:correspondence_resolvent} then allows to set the existence of a mapping $g_y$ such that $T_y = J_{g_y}$. Now, the identity $\forall a\in A: J_{g_y} \geq J_{f_a}$ implies $g_y\leq f_a $ and allows to deduce from~\eqref{eq:charact_min} that $g_y\leq h$, which implies, in particular that:
  \begin{align*}
    J_h(x)\leq  J_{g_y}(x)=\max_{a\in A} J_{f_a}(x) +y-\max_{a\in A} J_{f_a}(x)=y.
  \end{align*}
  Finally the fact that $\forall a\in A, J_h(x)\geq J_{f_a}(x)$ and $\forall y\in \mathbb R, (\forall a\in A: y\geq J_{f_a}(x)) \implies y\geq J_h(x)$ exactly means that $J_h(x) = \max_{a\in A}J_{f_a}(x)$. In other words $h = \min f$.
\end{proof}
To describe domains and ranges of $\min f$ and $\max f$, we first define min/max for families of intervals in a way consistent with the interval order used in \eqref{eq:order_relation_intervals}.

The domain of the minimum and of the maximum is defined thanks to this notion of minimum and maximum of intervals deduced from the order relations between intervals given in \eqref{eq:order_relation_intervals}.
\begin{definition}[Minimum and maximum of intervals]\label{def:minimum_intervals}
  Given a finite set $A$ and a family of intervals $(I^{(a)})_{a\in A}\subset (2^{\mathbb R})^A$, let us define:
  \begin{align*}
    \min_{a\in A}I^{(a)}
    = \left( \bigcup_{a\in A} I^{(a)}_+ \right) \bigcap_{a\in A} I^{(a)}_- &&\et&&
    \max_{a\in A}I^{(a)}
    = \left( \bigcup_{a\in A} I^{(a)}_- \right) \bigcap_{a\in A} I^{(a)}_+.
  \end{align*}
\end{definition}
The following characterization is standard and will be useful later; we omit the proof since it reduces to comparing endpoints and tracking whether bounds are open or closed.
\begin{proposition}[Characterization of min/max of intervals]\label{pro:characterization_interval_minimum}
Given a finite set $A$ and a family of intervals $(I^{(a)})_{a\in A}\subset (2^{\mathbb R})^A$, there exists a unique interval $H\subset \mathbb R$ such that $\forall a\in A$, $H\leq I^{(a)}$ and for all interval $G\subset \mathbb R$, one has the implication:
  \begin{align*}
    (\forall a\in A: \  G\leq I^{(a)})
    \quad \Longrightarrow \quad
    G\leq H.
  \end{align*}
  This interval is exactly the interval $\min_{a\in A} I^{(a)}$. A symmetric property holds for the maximum of intervals.
\end{proposition}
Proposition~\ref{pro:proto_min_maximally_monotone} below introduces a “point-wise-based” definition for minimum/maximum of operators and states that it preserves maximality. Propostion~\ref{pro:domain_of_min} will show later that it is consistent with the Resolvent min/max notion given in Definition~\ref{def:minimum_and_maximum_of_maximally_monotone_operators}.
Let us adopt the convention that for any interval $I\subset \mathbb R$ (possibly $I= \emptyset$):
\begin{align}\label{cov:min_intervals}
  \min(I, \emptyset ) = \max(I, \emptyset) = I,
\end{align}
and we generalize this convention to family of intervals of more than two elements. That allows us in particular to use in next proposition the expression:
\begin{align*}
  \min_{a\in A} f_a(x) = \min_{a\in A:\ x\in \dom(f_a)} f_a(x).
\end{align*}
\begin{proposition}[Maximality of point--wise min/max]\label{pro:proto_min_maximally_monotone}
  Given a finite set $A$ and $f\in \mathcal M_{\uparrow}^A$ (resp. $f\in \mathcal M_{\downarrow}^A$), the operator $h:\mathbb R \to 2^{\mathbb R}$ satisfying $\dom h = \max_{a\in A}\dom (f_a)$ (resp. $\dom h = \min_{a\in A}\dom (f_a)$) and $\forall x\in \dom(h)$:
  \begin{align}\label{eq:expression_h}
    h(x) = \min  \{f_a(x), a \in A\}
  \end{align}
  is maximally monotone. The operator obtained replacing $\min$ with $\max$ is also maximally monotone. 
\end{proposition}

\begin{proof}
  Let us employ the characterization of maximality given by Minty's Theorem (Proposition~\ref{pro:minty_theorem}). Considering $y\in \mathbb R$, we know that for all $a \in A$, there exists $x_a\in \mathbb R$ such that $y\in f_a(x_a) + x_a $. Considering $a_0\in A$ satisfying $x_{a_0} = \max_{a\in A}x_a$, for all $a\in A$, the monotonicity of $f_a$ and inequality $x_a\leq x_{a_0}$ yields:
  \begin{align*}
    y\in (f_{a}(x_{a}) +x_{a})_-\subset  (f_{a}(x_{a_0}) +x_{a_0})_-,
  \end{align*}
  which implies with Definition~\ref{def:minimum_intervals} (and the fact that $y\in f_{a_0}(x_{a_0}) +x_{a_0}\subset (f_{a_0}(x_{a_0}) +x_{a_0})_+$) that:
  \begin{align*}
    y \in \left( \cup_{a\in A}  (f_{a}(x_{a_0}) +x_{a_0})_+\right)\cap_{a\in A} (f_{a}(x_{a_0}) +x_{a_0})_- 
    &=\min \{f_{a}(x_{a_0}) +x_{a_0}, a\in A\}\\
    &=h(x_{a_0})+x_{a_0}
  \end{align*}
  One can then conclude from Proposition~\ref{pro:minty_theorem} that $h\in \mathcal M_{\uparrow}$.
\end{proof}
This point-zise notion of minmax provides a clean description of resolvent-based min/max of operators and of their domain and range.
\begin{proposition}[Consistency between point-wise and resolvent based notions of operator min/max]\label{pro:domain_of_min}
  Given a finite set $A$ and $f\in \mathcal M_{\uparrow}^ A$ (resp. $f\in \mathcal M_{\downarrow}^ A$)
  \begin{align*}
    \dom(\min f)&=\max_{a\in A}\dom(f_{a})
    \qquad 
    (\text{resp. } \ \dom(\min f)=\min_{a\in A}\dom(f_{a})),\\
    \ran(\min f)&=\min_{a\in A}\ran(f_{a}),
  \end{align*}
  and for all $x\in \dom(\min f)$:
  \begin{align*}
    (\min f)(x) = \min \{f_a(x), a \in A\}.
  \end{align*}
  The symbols ``$\min$'' and ``$\max$'' can be interchanged all at once.
\end{proposition}
\begin{proof}
As usual, we only treat here the case $f\in \mathcal M_{\uparrow}^ A$.
  The result on the domains is simply deduced from the identity :
  \begin{align*}
    \dom(\min(f))
    &=\ran(J_{\min(f)})
    =\ran(\max_{a\in A} (J_{f_{a}}))\\
    &=\max_{a\in A}(\ran(J_{f_{a}}))
     =\max_{a\in A}(\dom(f_{a})).
  \end{align*}
The result on the range is a simple consequence of Proposition~\ref{pro:inverse_min_max} and the identity $\dom(f)=\ran(f^{-1})$.

  To prove the second result let us rely on Proposition~\ref{pro:proto_min_maximally_monotone} that sets that the operator $h:\mathbb R \to 2^{\mathbb R}$ satisfying $\dom h = \max_{a\in A}\dom (f_a)$ and $\forall x\in \dom(h)$:
  \begin{align*}
    h(x) = \min_{a\in A} f_a(x)
  \end{align*}
  is maximally monotone. Further, looking at the characterization of pointwise resolvent order given by Proposition~\ref{pro:charac_res_order_inequ} we see easily that $\forall a\in A$, $h \leq f_a$.
  Besides, for any $g\in \mathcal M_\uparrow$ satisfying that $\forall a\in A$, $g\leq f_a$ then, in particular:
  \begin{itemize}
     \item $\dom(g) \geq \max_{b\in A}\dom (f_b) \geq \dom(h)$,
     \item given $ x \in \dom(g)\cap \dom(h)$: $\forall a\in A$ such that $x\in \dom (f_a)$, $x \in \dom(g)\cap \dom (f_a)$ and therefore $g(x)\leq f_a(x)$, so in particular Proposition~\ref{pro:characterization_interval_minimum} yields:
     \begin{align*}
        g(x)\leq \min_{a\in A}f_a(x)=h(x).
      \end{align*} 
   \end{itemize} 
   We can then conclude with Proposition~\ref{pro:characterization_resolvent_order} that $g\leq h$ and Proposition~\ref{pro:characterization_max_min} finally allows us to set $h=\min f$.
\end{proof}
We now record how min/max behave under composition and algebraic operations.
\begin{proposition}[Distributivity of composition with min/max]\label{pro:composition_with_minimum}
  Given three operators $f,g\in \mathcal M_{\uparrow}$ (resp. $f,g\in \mathcal M_{\downarrow}$) and $h\in \mathcal M$, if $f\circ h,g\circ h\in \mathcal M$:
  \begin{align*}
    \min(f,g)\circ h = \min(f\circ h,g\circ h)
    &&\et&&
    \max(f,g)\circ h = \max(f\circ h,g\circ h).
  \end{align*}
If instead we assume $h\circ f,h\circ g\in \mathcal M$, one similarly has:
\[
  h\circ\min(f,g)  = \min(h\circ f,h\circ g)
  \qquad\text{and}\qquad
  h\circ\max(f,g)  = \max(h\circ f,h\circ g).
\]
\end{proposition}

We will use two simple set-theoretic lemmas on images of unions, intersections and minimum.
\begin{lemma}\label{lem:h_p_hp}
   Given an operator $h: \mathbb R\to 2^{\mathbb R}$, a finite set $A$ and a family of intervals $(I_a)_{a\in A}$:
   \begin{align*}
     h(\cup_{a\in A} I_a) =\cup_{a\in A} h(I_a).
   \end{align*}
   If we assume that $h\in \mathcal M$ and $\cap_{a\in A} I_a\neq \emptyset$:
   \begin{align*}
       h(\cap_{a\in A} I_a) =\cap_{a\in A} h(I_a),
   \end{align*}
 \end{lemma} 

\begin{proof}
  Let us do the proof in the case where $A=\{1,2\}$ and deduce the general result by iteration.
  Trivially $h(I_1\cup I_2) = h(I_1) \cup h(I_2)$ and  $h(I_1\cap I_2) \subset h(I_1) \cap h(I_2)$. 

  Now, if we assume, say, $h\in \mathcal M_\uparrow$ and consider $z\in h(I_1) \cap h(I_2)$ then there exists $x_1\in I_1$, $x_2\in I_2$ such that $z = h(x_1)= h(x_2)$, and since $I_1\cap I_2\neq \emptyset$ and $h$ maximally monotone, if $x_1\neq x_2$, there exists $x\in (x_1, x_2) \cap I_1\cap I_2$ such that $h(x) = z$.
\end{proof}
\begin{lemma}\label{lem:h_min_interval}
  Given an operator $h\in \mathcal M_\uparrow$ (resp. $h\in \mathcal M_\downarrow$) a finite set $A$ and a family of intervals $(I_a)_{a\in A}$, we have the identity\footnote{Recall “$\min I$” abbreviates $\min_{a\in A} I_a$ for the family $(I_a)_{a\in A}$. If some of the intervals $I_a, a\in A$ are empty, they do not contribute to the minimum (or maximum) and $\min(\emptyset, \emptyset) = \emptyset$.}:
   \begin{align*}
     h(\min I) = \min_{a\in A} h(I_a)
     \qquad
     \text{(resp. $h(\min I) = \max_{a\in A} h(I_a)$)}.
   \end{align*}
   The same identities hold if we interchange ``$\min$'' and ``$\max$'' all at once.
\end{lemma}
\begin{proof}
  Considering the case where the family $(I_a)_{a\in A}$ only contains two \textit{nonempty} intervals $I,J$ and where $h\in \mathcal M_\uparrow$, let us check the characteristics of the minimum of intervals provided in Proposition~\ref{pro:characterization_interval_minimum}. 
  We already have $h(\min(I,J))\leq h(I)$ and $h(\min(I,J))\leq h(J)$ from \eqref{eq:f_increasing_on_intervals}. Now let us consider an interval $G\subset \mathbb R$ such that:
  \begin{align}\label{eq:assumption_G_hI_hJ}
    G\leq h(I)
    &&\et && G\leq h(J).
  \end{align}
  One can compute from Lemma~\ref{lem:h_p_hp} (and the fact that $I_-\cap J_-\cap (I_+\cup J_+) \neq \emptyset$):
  \begin{align*}
     h(\min(I,J)) = h \left( (I_+\cup J_+)\cap I_-\cap J_- \right) =  (h(I_+)\cup h(J_+))\cap h(I_-)\cap h(J_-). 
   \end{align*} 
   From this identity, one can first deduce:
   \begin{align}\label{eq:h_in_G_+}
     h(\min(I,J))\subset h(I_+)\cup h(J_+) \subset h(I)_+\cup h(J)_+\subset G_+.
   \end{align}
   Second, one can further deduce from~\eqref{eq:assumption_G_hI_hJ} the inclusions:
   \begin{align*}
   \left\{
   \begin{aligned}
     G\subset h(I)_-\cap h(J)_- = h(I_-)_-\cap h(J_-)_-\\
     G\subset h(I)_-\cup h(J)_- \subset  h(I_+)_-\cup h(J_+)_-,
   \end{aligned}\right.
   \end{align*}
   which yields:
   \begin{align}\label{eq:G_-_in_h}
     G \subset \left( h(I_+)_-\cup h(J_+)_- \right) \cap h(I_-)_-\cap h(J_-)_- = h(\min(I,J))_-,
   \end{align}
   since for any intervals $K,L\subset \mathbb R$ such that $K\cap L\neq \emptyset$: $(K\cap L)_- = K_-\cap L_-$ and $(K\cup L)_- = K_-\cup L_-$. 
   Combining \eqref{eq:G_-_in_h} and \eqref{eq:h_in_G_+}, one exactly gets $G\leq h(\min(I,J))$, which allows us to establish that $h(\min(I,J)) = \min(h(I), h(J))$.
\end{proof}

\begin{proof}[Proof of Proposition~\ref{pro:composition_with_minimum}]
  Assuming $f,g,h\in \mathcal M_\uparrow$ (and of course $f\circ h, f\circ g\in \mathcal M_\uparrow$), we know from Lemma~\ref{lem:h_min_interval} that:
  \begin{align*}
    \dom(\min(f,g)\circ h)
    &=h^ {-1}(\dom(\min(f,g)))
    =h^ {-1}(\max(\dom(f),\dom(g)))\\
    &=\max(h^ {-1}(\dom(f)),h^ {-1}(\dom(g)))
    =\dom(\min(f\circ h,g\circ h)).
  \end{align*}
  Besides, if $\dom(f)\cap \dom(g)\neq \emptyset$, Lemma~\ref{lem:h_p_hp} yields:
  \begin{align*}
     h^{-1}\circ (\dom(f)\cap \dom(g)) = h^{-1}(\dom(f)) \cap h^{-1}(\dom(g)),
   \end{align*} 
   and for any $x\in h^{-1}\circ (\dom(f)\cap \dom(g))$:
  \begin{align*}
     \min(f,g)\circ h(x) =\min(f\circ h(x),g\circ h(x)),
   \end{align*} 
   which allows us to conclude with the characterization of the minimum given by Proposition~\ref{pro:proto_min_maximally_monotone}.

   If $\dom(f)\cap \dom(g)= \emptyset$ and, say $\dom(f)\geq \dom(g)$, if $h^{-1}(\dom(f))\cap h^{-1}(\dom(g))=\emptyset$, one has trivially $\min(f,g) \circ h= f\circ h = \min(f\circ h, f\circ g)$. Now let us assume $h^{-1}(\dom(f))\cap h^{-1}(\dom(g))\neq\emptyset$ then, for all $x\in h^{-1}(\dom(f))\cap h^{-1}(\dom(g))$, the fact that $h(x)\cap\dom(f)\neq \emptyset$ and $\cap \dom(g)\neq \emptyset$, $\dom(f)\cap \dom(g)=\emptyset$ and $\dom(g)\leq \dom(f)$ implies with Proposition~\ref{pro:characterization_maximality_R} setting that $\dom(f)+\ran(f)=\dom(g)+\ran(g)=\mathbb R$ that:
   \begin{align*}
     f(h(x)) = f(h(x))_-
     &&\et&&
     g(h(x))=g(h(x))_+
   \end{align*}
   and consequently, $f(h(x))_+ = \mathbb R$ and $g(h(x))_-=\mathbb R$. Therefore
   \begin{align*}
     \min(f(h(x)),g(h(x)))
     &= \left( f(h(x))_+\cup g(h(x))_+ \right) \cap f(h(x))_- \cap g(h(x))_-\\
     &=f(h(x)) = \min(f,g)\circ h(x), 
   \end{align*}
   which allows us to conclude with Proposition~\ref{pro:proto_min_maximally_monotone}.

  Analogous arguments yield the statements for $\max$ and for nonincreasing operators.

  If we assume that $h\circ f,h \circ g\in \mathcal M$, then $f^{-1}\circ h^{-1},g^{-1}\circ h^{-1} \in \mathcal M$, and one can conclude from Proposition~\ref{pro:inverse_min_max} and previous results:
  \begin{align*}
    \min( h\circ f,h \circ g)
    &=(\max(f^{-1}\circ h^{-1},g^{-1}\circ h^{-1}))^{-1}\\
    &=(\max(f^{-1},g^{-1})\circ h^{-1})^{-1}
    =h\circ \min(f,g).
  \end{align*}
  \end{proof}
Let us now combine min/max with (parallel) sum/product. In next proposition, we do not check if the operations $f+h, f+g, f\boxplus h, f\boxplus g$ are maximally monotone, relying on Corollary~\ref{cor:operation_maximally_monotone} and the following convention that extends~\eqref{cov:min_intervals} to maximally monotone operators $f\in \mathcal M$:
\begin{align*}
  \min(f, \emptyset) = f,
  &&
  \max(f, \emptyset) = f.
  &&\et&&
  \max(\emptyset, \emptyset)=\min(\emptyset, \emptyset) = \emptyset.
\end{align*}

\begin{proposition}[Distributivity of sum and product with min/max]\label{pro:operation_and_minimum}
  Given three maximally monotone operators $f,g, h\in \mathcal M_{\uparrow}$ (resp. $f, g, h\in \mathcal M_{\downarrow}$):
  \begin{align*}
    f+\min(g,h) = \min(f+g, f+h)
    &&\et&&
    f\boxplus \min(g,h) = \min(f\boxplus g, f\boxplus h).
  \end{align*}
  Assuming, in addition, $\dom(f), \dom(g), \dom(h)\subset \mathbb R_+$:
  \begin{align*}
    f\boxtimes \min(g,h) = \min(f\boxtimes g, f\boxtimes h).
  \end{align*}
  These properties generalize to maxima and minima over finite sets of more than $2$ elements.
\end{proposition}
\begin{proof}
  We prove the sum case for nondecreasing operators; the other cases are analogous. If $f+g=\emptyset$, we know that it means that $\dom(f)\cap \dom(g) = \emptyset$ and therefore:
  \begin{align*}
     f+\min(h,g) 
     = f+\restrict{\min(h,g)}{\dom(f)}
     = f+h
     = \min(f+h, f+g).
    \end{align*} 
    Let us now assume $f+g\neq\emptyset$ and $f+h\neq\emptyset$. 
    One can first easily check that:
    \begin{align*}
      \dom(f + \min(g,h)) 
      &= \dom(f)\cap \max(\dom(g), \dom(h))\\
      &= \max(\dom(f)\cap\dom(g), \dom(f)\cap\dom(h))\\
      &=\dom(\min(f+g,f+h)).
    \end{align*}
    Second, note that for any $x\in \dom(\min(f+g,f+h))= \dom(f + \min(g,h))$:
    \begin{align*}
      (f + \min(g,h))(x)
      &=f(x) + \min(g,h)(x)\\
      &=f(x) + \min (g(x), h(x))
      =\min (f(x) +g(x), f(x) +h(x)),
    \end{align*}
    which allows us to conclude with Proposition~\ref{pro:domain_of_min}.

    The result on parallel sum is a simple consequence of Proposition~\ref{pro:inverse_min_max} and the product is treated similarly.
\end{proof}

\subsection{Concentration of the sum and product}\label{sub:concentration_of_the_sum_and_product}
We connect sums/products of random variables with the parallel sum/product of
their survival operators. Because we seek probability bounds, we work with a
slightly restricted class of (positive) probabilistic operators; this keeps the
algebra stable and the proofs short.

To ensure stability through parallel sum/product (see Lemma~\ref{lem:stability_prob_op_parallel_op} below), one might assume that ranges of probabilistic operators contain $1$ and then invoke Proposition~\ref{pro:sum_maximally_monotone}. Our issue with this approach is that, for instance, given a Gaussian random variable $X\sim \mathcal N(0,1)$, $1\notin \ran(S_X)$. However for any random variable $X\in \mathbb R$, one always has the existence of an increasing sequence $(a_n)\in \ran(S_X)\subset \mathbb R_+^{\mathbb N}$ such that $\lim _{n \in \mathbb N} a_n =1$. Let us denote this property:
\begin{align*}
  1^- \subset \ran(S_X).
\end{align*}

Note then that if two operators $\alpha, \beta: \mathbb R \to 2^{\mathbb R}$ satisfy $1^- \subset \ran(\alpha)$ and $1^- \subset \ran(\beta)$ then $\ran(\alpha)\cap \ran(\beta) \neq \emptyset$ which opens the door to  an application of Propositions~\ref{pro:sum_maximally_monotone}.
\begin{definition}[(Positive) probabilistic operators]\label{def:probabilistic_operators}
Let us define:
\begin{itemize}
  \item the class of \textit{probabilistic operators}:
  \begin{align*}
    \mathcal M_{\mathbb P} = \left\{\alpha\in \mathcal M_\downarrow: 1^- \subset \ran(\alpha)\subset \mathbb R_+ \right\}
  \end{align*}
  \item the class of \textit{positive probabilistic operators}:
  \begin{align*}
    \mathcal M_{\mathbb P_+} = \left\{\alpha\in \mathcal M_{\mathbb P}:\dom(\alpha)\subset \mathbb R_+ \right\}
  \end{align*}
\end{itemize}

\end{definition}
The positive subclass simply restricts the domain to $\mathbb R_+$: 
\begin{align*}
  \alpha \in \mathcal M_{\mathbb P}
  \qquad \Longrightarrow \qquad
  \restrict{\alpha}{\mathbb R_+}\in \mathcal M_{\mathbb P_+}.
\end{align*}
We next note that these classes are closed under the parallel operations used below. It a simple consequence of Propositions~\ref{pro:ran_dom_convex} and~\ref{pro:sum_maximally_monotone}:
\begin{lemma}\label{lem:stability_prob_op_parallel_op}
  Given two operators $\alpha,\beta : \mathbb R \to 2^{\mathbb R}$:
  \begin{align*}
    \alpha, \beta \in \mathcal M_{\mathbb P} \ \implies \ \alpha \boxplus\beta \in \mathcal M_{\mathbb P}
    &&\et&&
    \alpha, \beta \in \mathcal M_{\mathbb P_+} \ \implies \ \alpha \boxplus\beta, \alpha \boxtimes \beta \in \mathcal M_{\mathbb P_+}.
  \end{align*}
\end{lemma}

\begin{proposition}[Sum and product of concentration inequalities]\label{pro:concentration_sum_prod}
Given $\alpha_1, \ldots, \alpha_n \in \mathcal M_{\mathbb P}$ and $n$ random variables $X_1,\ldots, X_n$ satisfying, for all $k\in [n]$, $\forall t\in \dom (\alpha_k)$: 
$S_{X_k} \leq \alpha_k$, 
we have the concentration\footnote{Recall that the parallel sum, and the parallel product are both associative operations thanks to Lemma~\ref{lem:associativity_parallel_sum_product}, there is therefore no need for parentheses.}${}^,$\footnote{Given an operator $\gamma: \mathbb R\to 2^{\mathbb R}$ and $n\in \mathbb N$, $\frac{1}{n}\gamma = \frac{1}{n} \cdot \gamma$ naturally designates the operator $x \mapsto \{\frac{1}{n}\} \cdot \gamma(x)$.}:
  \begin{align*}
    \frac{1}{n}S_{\sum_{k=1}^n X_k} \leq \alpha_1 \boxplus  \cdots \boxplus  \alpha_n.
  \end{align*}
If we assume, in addition that $\forall k \in [n]$, $\alpha_k \in \mathcal M_{\mathbb P_+}$ and $X_k \geq 0$ almost surely, 
 then, we have the concentration:
  \begin{align*}
   \frac{1}{n} S_{\prod_{k=1}^n X_k} \leq  \alpha_1 \boxtimes  \cdots \boxtimes  \alpha_n.
  \end{align*}
\end{proposition}
For $n=2$ and $\alpha=\beta$, note that $\alpha\boxplus\beta=\alpha\circ(\id/2)$, recovering the symmetric two-variable bound.
In particular, the example depicted on Figure~\ref{fig:prob_opt} shows that the inequality $\mathbb P \left( X+Y > t\right) \leq 2 \alpha (\frac{t}{2})$ can be reached for some random variables $X$ and $Y$ and some values of $t$.

\begin{figure}[t]
\begin{center}
\includegraphics[width=6cm]{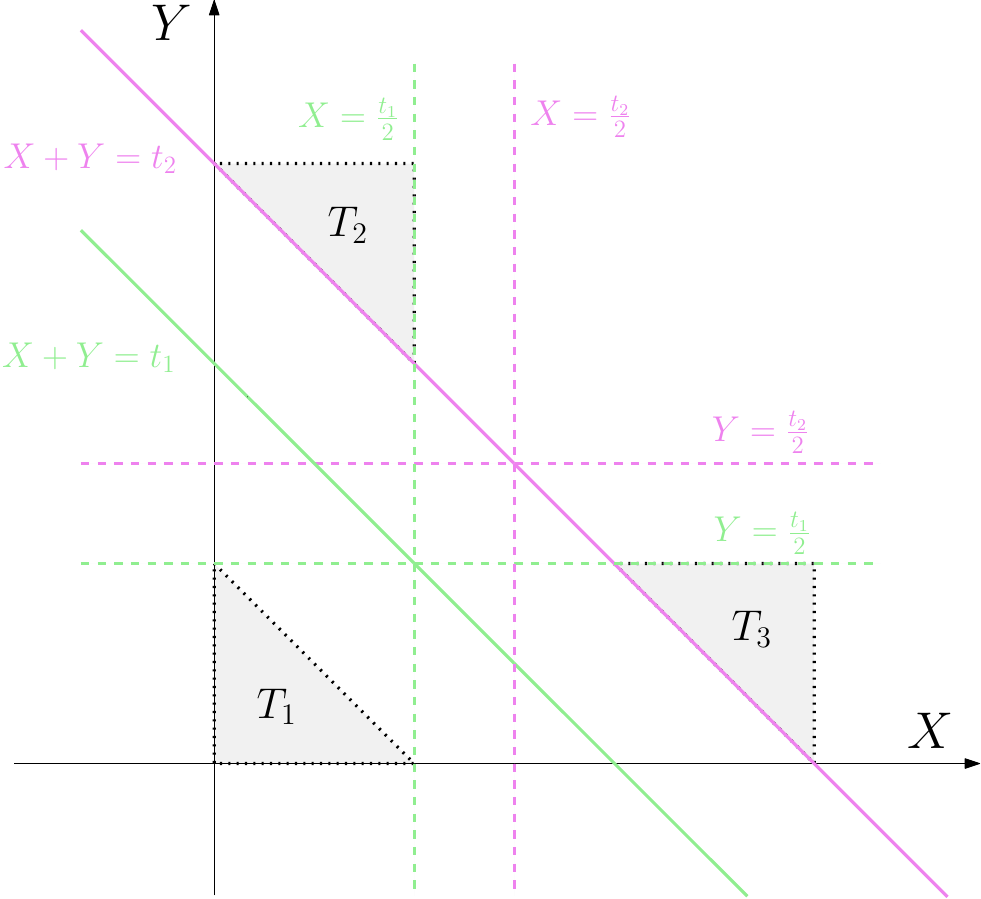}
\end{center}

\vspace{-0.5cm}

\caption{If the law of $(X, Y)\in \mathbb R^2$ is uniformly distributed on the gray triangles, then $\forall t \in [t_1, t_2]$: $\mathbb P(X+Y  > t) = \frac{2}{3} = 2 \mathbb P(X> \frac{t}{2}) = 2 \mathbb P(Y> \frac{t}{2})$. One can also unbalance the weights between $T_1$ and $T_2, T_3$ to get probabilities different from $\frac{1}{3}$.}
\label{fig:prob_opt}
\end{figure}

\begin{proof}[Proof of Proposition~\ref{pro:concentration_sum_prod}]
   Let us introduce the operator:
   \begin{align*}
     \gamma \equiv \alpha_1 \boxplus  \cdots \boxplus  \alpha_n =\left( \alpha_1^{-1} + \cdots + \alpha_n^{-1} \right)^{-1} \in \mathcal M_{\mathbb P}
   \end{align*}
   (see Lemma~\ref{lem:stability_prob_op_parallel_op}). 
   Let us consider
   \begin{align*}
     t\in \dom(\gamma) = \dom(\alpha_1)+\cdots + \dom(\alpha_n)= \ran(\alpha_1^ {-1}) +\cdots  + \ran(\alpha_n^ {-1}).
   \end{align*}
   There exists $u\in \dom(\alpha_1^ {-1}+\cdots +\alpha_n^ {-1})$ such that:
   \begin{align*}
     \forall i\in [n], \exists t_i\in \alpha_i^ {-1}(u):\qquad t=t_1+\cdots + t_n
     \in (\alpha_1^ {-1} +\cdots +\alpha_n^ {-1})(u).
   \end{align*}
    Let us then bound:
    \begin{align*}
      \mathbb P \left( X_1+\cdots +X_n > t\right)
      &= \mathbb P \left( X_1+\cdots +X_n > t_1 +\cdots + t_n\right)\\
      &\leq \mathbb P \left( X_1> t_1\right) +\cdots + \mathbb P \left( X_n> t_n\right)
      \leq \alpha_1 \left( t_1 \right) +\cdots + \alpha_n \left( t_n \right),
    \end{align*}
    and conclude from the inclusion $t_i\in \alpha_i^ {-1}(u)$ and $ t \in \alpha_1^ {-1}(u)+\cdots + \alpha_n^ {-1}(u) = \gamma^{-1}(u)$ that can be reversed thanks to~\eqref{eq:equivalence_x_y_f_f_m1}:
    \begin{align*}
       u \in (\alpha_1 \left( t_1 \right) +\cdots + \alpha_n \left( t_n \right))\cap n\gamma(t) \subset \{\mathbb P \left( X_1+\cdots +X_n > t\right)\}_+\cap n \gamma(t) = S_X(t)_+\cap n \gamma(t).
    \end{align*}
    One can then conclude with Corollary~\ref{cor:inequality_on_survival} that $S_X\leq n\gamma$.

    The proof is analogous for the parallel product thanks to the almost sure implication, true for any $t_1,\ldots, t_n >0$:
    \begin{align*}
      X_1\cdots X_n > t_1 \cdots t_n
      \quad \Longrightarrow \quad
      X_1 > t_1 \ \ou \ \ldots \ \ou \ X_n > t_n.
    \end{align*}

\end{proof}
Finally, we bound a parallel sum by min/max envelopes; equality holds precisely when the inverses coincide on the common range.
\begin{lemma}\label{lem:parallel_sum_and_min}
   Given $\alpha_1,\ldots, \alpha_n\in \mathcal M_\downarrow$ and $\theta_1,\ldots, \theta_n>0$ such that $\theta_1+\cdots + \theta_n = 1$:
   \begin{align*}
     \min_{i\in [n]}  \alpha_i \circ (\theta_i \cdot \id)
     \leq\alpha_1 \boxplus \cdots \boxplus \alpha_n 
     \leq \max_{i\in [n]} \alpha_i \circ (\theta_i\cdot \id),
   \end{align*}
   and the first (resp. second) inequality holds with equality iff $\restrict{\frac{\alpha_1^{-1}}{\theta_1}}{R}=\cdots = \restrict{\frac{\alpha_n^{-1}}{\theta_n}}{R}$, where $R=\min_i \ran(\alpha_i)$ (resp. $R=\max_i \ran(\alpha_i)$).
\end{lemma} 
\begin{proof}[Proof of Lemma~\ref{lem:parallel_sum_and_min}]
  We only prove here $\alpha_1 \boxplus \cdots \boxplus \alpha_n 
     \leq \max_{i\in [n]} \alpha_i \circ (\theta_i\cdot \id)$ and justify condition for equality.
  Let us simply bound thanks to Propositions~\ref{pro:basic_prop_min_max} and~\ref{pro:inverse_min_max}:
  \begin{align*}
    \alpha_1 \boxplus \cdots \boxplus\alpha_n
    &= \left( \theta_1 \frac{\alpha_1^{-1}}{\theta_1}  +  \cdots  + \theta_n \frac{\alpha_n^{-1}}{\theta_n} \right)^{-1}\\
  &\leq \left( \max \left( \frac{\alpha_1^{-1}}{\theta_1},  \ldots , \frac{\alpha_n^{-1}}{\theta_n} \right) \right)^{-1}
  =  \max \left( \alpha_1 \circ (\theta_1 \cdot \id),  \ldots , \alpha_n \circ (\theta_n \cdot \id) \right).
  \end{align*}

   If $\alpha_1 \boxplus \cdots \boxplus \alpha_n = \max_{i\in [n]} \alpha_i \circ (\theta_i \cdot \id)$, then, denoting $\beta_i \equiv \frac{\alpha_i^{-1}}{\theta_i}$ and $\gamma \equiv \max_{i \in [n]}\beta_i $, we have the identity:
   \begin{align*}
    \theta_1 \beta_1  +  \cdots  + \theta_n\beta_n 
  =  \gamma,
   \end{align*}
   which yields, using Lemma~\ref{lem:prop_sum_stable_res_order}, to the inequality chain valid for any $i\in [n]$:
   \begin{align*}
     \gamma = \theta_1 \beta_1  +  \cdots  + \theta_n\beta_n 
     \leq (1-\theta_i)\gamma + \theta_i \beta_i
     \leq \gamma .
   \end{align*}
   Equality
   \begin{align*}
     (1-\theta_i)\gamma + \theta_i \beta_i
     \leq (1-\theta_i)\gamma + \theta_i \gamma,
    \end{align*}
      then imply with Corollary~\ref{cor:equality_through_sum} that:
   \begin{align*}
     \restrict{\beta_i}{\dom(\gamma)} = \restrict{\gamma}{\dom(\gamma)},
   \end{align*}
   One is then simply left to identify:
   \begin{align*}
     \dom(\gamma) = \max_{i\in [n]} \dom \alpha_i^{-1} = \max_{i\in [n]} \ran \alpha_i.
   \end{align*}
   The converse is trivial by definition of the sum, the min and the max of maximally monotone operators.
\end{proof}

\section{Probabilistic results}\label{sec:probabilistic_results}

\subsection{Pivot of a concentration}\label{sse:pivot_concentration}

Proposition~\ref{pro:concentration_sum_prod} is a ``tail concentration result''; one may be more naturally interested in concentration around a deterministic value, as in the following results.

\begin{proposition}[Pivotal concentration for sums and products]\label{pro:concentration_produit_somme}
  Given two real-valued random variables $X,Y \in \mathbb R$ and two deterministic scalars $\tilde X, \tilde Y \in \mathbb R^ *$, such that $S_{|X - \tilde X|}\leq  \alpha$ and $S_{|Y - \tilde Y|}\leq  \beta$, with $\alpha, \beta \in \mathcal M_{\mathbb P_{+}}$, we have the concentrations:
  \begin{itemize}
    \item $\frac{1}{2}S_{|X+Y - \tilde X - \tilde Y| } \leq \alpha  \boxplus  \beta$,
    \item $\frac{1}{4}S_{|XY - \tilde X  \tilde Y| } \leq (\alpha \boxtimes  \beta)  \boxplus  \left( \alpha \circ \frac{\id}{|\tilde Y|} \right)  \boxplus  \left( \beta \circ \frac{\id}{|\tilde X|} \right)$.
  \end{itemize}
  \begin{proof}
    As in the proof of Proposition~\ref{pro:concentration_sum_prod}, this proof relies on Corollary~\ref{cor:inequality_on_survival}. The first result is a simple consequence of the triangle inequality and Proposition~\ref{pro:concentration_sum_prod}. Denoting $Z_X \equiv |X-\tilde X|$ and $Z_Y \equiv |Y-\tilde Y|$, 
    the triangle inequality gives:
    \begin{align*}
       |XY - \tilde X  \tilde Y| \leq Z_XZ_Y + |\tilde Y|Z_X + |\tilde X|Z_Y.
     \end{align*} 
     Applying Proposition~\ref{pro:concentration_sum_prod} three times (to set the concentration of (i) $Z_X Z_Y$, (ii) $ |\tilde Y|Z_X + |\tilde X|Z_Y$ and (iii) $(Z_X Z_Y)+ (|\tilde Y|Z_X + |\tilde X|Z_Y)$), we get:
    \begin{align*}
      S_{|XY - \tilde X  \tilde Y|}
      &\leq S_{Z_XZ_Y + |\tilde Y|Z_X + |\tilde X|Z_Y}\\
      &\leq 2 \left( \left( 2 \alpha \boxtimes \beta \right) \boxplus \left( 2 \left( \left( \alpha \circ \frac{\id}{|\tilde Y|} \right) \boxplus \left( \beta \circ \frac{\id}{|\tilde X|} \right)   \right) \right) \right)\\
      &= 4\cdot \left( (\alpha \boxtimes  \beta)  \boxplus  \left( \alpha \circ \frac{\id}{|\tilde Y|} \right)  \boxplus  \left( \beta \circ \frac{\id}{|\tilde X|} \right) \right),
    \end{align*}
    using associativity of parallel sum and Proposition~\ref{pro:distribution_of_composition}.
  \end{proof}
\end{proposition}
The previous bound simplifies in symmetric cases, yielding Hanson–Wright–type two‑regime tails.
Given an exponent $a>0$ (resp. $a<0$), the operator $\id^a$ is by convention defined only on $\mathbb{R}_+$ (resp. on $\mathbb{R}_+^*$) and satisfies
\begin{align}\label{eq:power_operator}
\forall t>0:\qquad \id^a(t) = t^a,
\end{align}
When $a>0$, we set $\id^a(0)=\mathbb{R}_-$.
Note that with this notation $\id^1 \neq \id$.
This unusual choice ensures that $\id^a$ is maximally monotone, see 
Example~\ref{ex:max_mon_operator}, Item~\ref{itm:power_ope}. Naturally $\sqrt{\id}\equiv \id^{1/2}$. 
Note that with this notation $\id^1\neq \id$.

\begin{remark}[Hanson--Wright like decay]\label{rem:pre_hanson_wright}
  In the setting of Proposition~\ref{pro:concentration_produit_somme} above, if $\alpha = \beta\in \mathcal M_{\mathbb R_+}$ and $|\tilde Y|\leq |\tilde X|$, then Lemma~\ref{lem:parallel_sum_and_min} yields:
  \begin{align*}
     (\alpha \boxtimes  \beta)  \boxplus  \left( \alpha \circ \frac{\id}{|\tilde Y|} \right)  \boxplus  \left( \beta \circ \frac{\id}{|\tilde X|} \right)
     &\leq \alpha \circ \sqrt{\id}  \boxplus  \alpha \circ \frac{\id}{|\tilde X|}  \boxplus  \alpha \circ \frac{\id}{|\tilde X|}\\
     &\leq \max \left( \alpha \circ \sqrt{\frac{\id}{3}}, \alpha \circ \frac{\id}{3|\tilde X|} \right) .
   \end{align*} 
   Therefore, one obtains:
  \begin{align*}
    \mathbb P(|XY - \tilde X  \tilde Y| > t ) \leq 4\alpha \left( \sqrt{\frac{t}{3}} \right) + 4\alpha \left( \frac{t}{3|\tilde X|} \right),
  \end{align*}
  which is reminiscent of the Hanson--Wright-type results presented in 
Theorems~\ref{the:hanson_Wright_adamczac_general_alpha} and 
\ref{the:hanson_Wright_adamczac_integrable_alpha}.
\end{remark}

Concentration around an independent copy often yields cleaner Lipschitz control, as the next lemmas formalize.
The concentration rate of a $\sigma$-Lipschitz transformation $f(X)\in\mathbb{R}$ of a random variable $X\in\mathbb{R}$ is controlled by $\sigma$, since

\begin{align*}
  |f(X) - f(X')| > t \quad \Longrightarrow \quad |X - X'| > \frac{t}{\sigma}.
\end{align*}
\begin{lemma}\label{lem:concentration_lipschitz_transformation}
Given a random variable $X \in \mathbb R$, an independent copy $X'$, $\sigma \in \mathbb R_+$,
a $\sigma$-Lipschitz map $f:\mathbb R\to\mathbb R$, and $\alpha \in \mathcal M_{\mathbb P_+}$, we have
the implication:
\begin{align*}
  S_{|X-X'|}\leq \alpha &\quad\quad& \Longrightarrow &\quad\quad& S_{|f(X)- f(X')|}\leq \alpha \circ \frac{\id}{\sigma}.
\end{align*}
\end{lemma}

We next relate concentration around an independent copy to concentration around a median.
\begin{lemma}\label{lem:median_ii}
  Given a random variable $X \in \mathbb R$, an independent copy $X'$, a median\footnote{A median $m_X$ of $X$ satisfies $\mathbb P(X\geq m_X), \mathbb P(X\leq m_X)\geq \frac{1}{2}$.} $m_X$ and $\alpha \in \mathcal M_{\mathbb P_+}$:
  \begin{align*}
    &\exists \tilde X \in \mathbb R \ | \ S_{|X-\tilde X|} \leq \alpha
    \quad \Longrightarrow \quad
    S_{|X- X'|} \leq 2\alpha \circ \frac{\id}{2}
    \quad \Longrightarrow \quad
    S_{|X- m_X|} \leq 4\alpha \circ \frac{\id}{2}.
  \end{align*}
\end{lemma}
\begin{proof}
The second implication follows from \cite[Corollary 1.5]{ledoux2005concentration}. 
The first implication follows from the simple observation that
  \begin{align*}
    \mathbb P(|X- X'| > t) \leq \mathbb P(|X- \tilde X| + | \tilde X -X'| > t) \leq 2 \mathbb P \left( |X- \tilde X| > \frac{t}{2}  \right).
  \end{align*}
\end{proof}
When $\alpha$ is integrable, the same mechanism yields deviation around the mean. Aumann introduced in \cite{Aumann1965} integrals of maximally monotone operators $f\in \mathcal M$ as being the integral of piecewise continuous mappings whose graph is included in $f$. 
However, we choose here to redo the Lebesgue construction, starting from simple functions (which will here be simple operators). The main reason is that this framework gives a natural and straightforward way to establish the identity $\int f = \int f^{-1}$ for $f\in\mathcal M_{\mathbb P_+}$ in Lemma~\ref{lem:int_inverse_op}.
We now present the definition of the integral of a maximally nonincreasing operator; a simple adaptation of the domain allows one to extend this definition to maximally nondecreasing operators.

\begin{definition}[Integral of simple operator]\label{def:simple_operator}
  An operator $h\in \mathcal M_{\downarrow}$ is called a simple operator and we denote $h\in \mathcal M_\downarrow^ s$ if there exist $n\in \mathbb N$, a nondecreasing family $(x_i)_{i\in [n]}\in \mathbb R^{n}$ and a nonincreasing family $(y_i)_{i\in [n]}\in \mathbb R^{n}$ such that:
  \begin{align*}
    h = \max_{i\in [n]}y_i\Incr_{x_i}.
  \end{align*}
  Given $a,b\in \mathbb R\cup \{-\infty, \infty\}$, such that $a\leq b$, the integral of $h$ between $a$ and $b$ is defined as:
  \begin{align*}
     \int_a^b h \equiv \sum_{i=1}^n (x_i^ b-x_{i-1}^a) y_i,
   \end{align*}
   where, for all $i\in [n]$, $x_i^a = \max(a,x_{i})$, $x_i^ b = \min(b,x_i)$ and $x_0^a=a$. 
 \end{definition} 
 \begin{definition}[Integral of maximally nonincreasing operators]\label{def:integral_operator}
  Given $a,b\in \mathbb R\cup \{-\infty, \infty\}$ and $f\in \mathcal M_{\downarrow}$, the integral of $f$ between $a$ and $b$ is defined as:
  \begin{align*}
     \int_a^b f=\int_a^b f(t)dt \equiv \sup_{h\in \mathcal M_\downarrow^ s: h\leq f} \int_a^ b h.
   \end{align*}
 \end{definition} 

With the notation of Definition~\ref{def:simple_operator}, the connection between simple operators and the simple functions underlying the Lebesgue integral is given by:
 \begin{align*}
   \int_a^ b h = \sum_{i=1}^ ny_i\int_a^ b \un_{[x_{i-1}^ a, x_i^ b]}(t)dt = \int_a^ b \sum_{i=1}^ n y_i\un_{[x_{i-1}, x_i]\cap [a,b]}(t)dt ,
 \end{align*}
 with $x_0=-\infty$.
 This identity allows us to retrieve the Aumann definition of operator integral (see Proposition~\ref{pro:Auman_integral} below), and justifies the validity of classical results (such as the Cauchy--Schwarz inequality and integration by parts) for the operator integral.
\begin{proposition}[Consistency with Aumann integral]\label{pro:Auman_integral}
  Given a maximally nonincreasing mapping $f: \mathbb R\to 2^{\mathbb R}$, two parameters $a,b \in \mathbb R\cup \{-\infty, +\infty\}$ such that $(a,b) \subset \dom(f)$ and a measurable function $g: \dom(f)\to \mathbb R$ such that $\forall x\in (a,b)$, $g(x)\in f(x)$:
\begin{align*}
  \int_{a}^b f = \int_a^b g.
\end{align*}
\end{proposition}

For positive probabilistic operators $\alpha \in \mathcal M_{\mathbb P_+}$, we will denote for simplicity:
\begin{align*}
  \int \alpha= \int_0^ \infty \alpha(t)dt \equiv \int_0^ \infty \alpha.
\end{align*}
Note in particular that if $\inf\dom(\alpha) >0$ then for all $y\in \mathbb R$, $h_y\equiv y\Incr_{\inf\dom(\alpha)} \leq \alpha$ and:
\begin{align}\label{eq:infinite_integral}
  \sup_{y\in \mathbb R}\int_0^\infty h_y = \sup_{y\in \mathbb R} y \inf\dom(\alpha) = +\infty.
\end{align}
The same way, if $\inf\ran(\alpha) >0$ then $\int\alpha = +\infty$.

For any $q>0$, we define the $q$-moment of $\alpha$ as:
\begin{align*}
  M_q^ \alpha \equiv \int \alpha\circ \id^{\frac{1}{q}}.
\end{align*}
The sequence of moments of probabilistic operators follows similar inequalities as moments of random variables thanks to H\"older inequality (the full justification is left in Appendix~\ref{sec:side_results_of_section_ref})
\begin{lemma}\label{lem:cauchy_shwarz}
Given $\alpha \in \mathcal M_{\mathbb P_+}$ and two parameters satisfying $0<q<p$:
\begin{align*}
(M_q^\alpha)^{\frac{1}{q}} \leq \alpha_0^{\frac{p-q}{pq}} (M_p^\alpha)^{\frac{1}{p}},
\end{align*}
where $\alpha_0 = \min\alpha(0)$.
\end{lemma}
\begin{lemma}\label{lem:moments_alpha_moments_X}
  Given a random variable $X\in \mathbb R$, a deterministic scalar $\tilde X\in \mathbb R$ and $\alpha\in \mathcal M_{\mathbb P_+}$ such that $S_{|X-\tilde X|}\leq \alpha$, one has:
  \begin{align*}
    \forall q>0:\qquad \mathbb E[|X-\tilde X|^q]\leq M_q^\alpha.
  \end{align*}
\end{lemma}
\begin{proof}
This is a simple application of Fubini's theorem (when $\alpha$ is integrable; otherwise the result is trivial):
  \begin{align*}
    \mathbb E[\vert X - \tilde X \vert^q]
     = \int _{\mathbb R_+} \mathbb P\left( \vert X - \tilde X \vert^q > t \right)
     = \int _{\mathbb R_+} \mathbb P\left( \vert X - \tilde X \vert > t^{\frac{1}{q}} \right)
    \leq M_q^\alpha.
  \end{align*}
\end{proof}
Let us now explain how to derive concentration around the mean from a concentration around the median (or an independent copy).
\begin{lemma}\label{lem:concentration_expectation_random_variable_case}
  Given a random variable $X \in \mathbb R$, a deterministic scalar $\tilde X \in \mathbb R$, and $\alpha \in \mathcal M_{\mathbb P_+}$ such that $S_{|X - \tilde X|} \leq \alpha$, 
one can bound\footnote{If $\alpha(\int\alpha) =\{0\}$, the result is trivial.}:
  \begin{align*}
    S_{|X - \mathbb E[ X]|} \leq  \dfrac{\alpha \circ \frac{\id}{2}}{\min \left( 1,\alpha(\int\alpha)\right) } .
  \end{align*}
\end{lemma}

Lemma~\ref{lem:concentration_expectation_random_variable_case} is proved thanks to:
\begin{lemma}\label{lem:borne_proba_decroissante}
  Given
  \footnote{Note that $\alpha(\tau)$ and $\alpha(t/2)$ may be intervals in $\mathbb{R}$. For two intervals $I,J$ with $0\notin J$, we define 
$\frac{I}{J}\equiv\{x/y : x\in I,\ y\in J\}$.}
 $\alpha\in \mathcal M_{\mathbb P_+}$, for any $t, \tau>0$:
  \begin{align*}
    \min \left( 1, \alpha(t - \tau) \right)\leq \frac{1}{\min(1,\alpha(\tau))}\alpha \left(\frac{t}{2} \right).
  \end{align*}  
\end{lemma}
\begin{proof}
  If $t< 2\tau$, $\alpha \left( t/2\right)\geq \max \alpha \left( \tau\right)$ and therefore $\frac{\alpha \left( t/2\right)}{\min(1,\alpha(\tau))}\geq \frac{\alpha \left( t/2\right)}{\alpha(\tau)}\geq \frac{\max \alpha \left( \tau\right)}{\max\alpha(\tau)}\geq 1$. If $t\geq 2\tau$, $\frac{t}{2}\leq t-\tau $ thus $\alpha(t - \tau) \leq \alpha(\frac{t}{2}) $.
\end{proof}
\begin{proof}[Proof of Lemma~\ref{lem:concentration_expectation_random_variable_case}]
 Lemma~\ref{lem:moments_alpha_moments_X} yields:
  \begin{align*}
    \vert \mathbb E[X]  - \tilde X\vert 
    \leq \mathbb E[\vert X - \tilde X \vert] \leq M_1^\alpha 
    =\int \alpha.
  \end{align*}
  We can then apply Lemma~\ref{lem:borne_proba_decroissante} to obtain
  \begin{align*}
    \mathbb P \left( \left\vert X - \mathbb E[X] \right\vert > t \right) 
    &\leq \mathbb P \left( \left\vert X - \tilde X \right\vert > t- \left\vert \mathbb E[X]  - \tilde X\right\vert \right)\\
    &\leq\min \left( 1, \alpha \left( t - \int \alpha \right) \right) \leq \frac{1}{\min \left( 1,\alpha \left(  \int  \alpha \right)  \right)}\alpha \left(\frac{t}{2} \right).
  \end{align*}
\end{proof}
We now introduce a simple “increment operator” to convert pivot bounds into tail bounds. For any $\delta\in \mathbb R$, the operator $\Incr_\delta\in \mathcal M_{\downarrow}$ is defined as:
\begin{align}\label{eq:increment_operator}
   \Incr_\delta((-\infty,\delta)) = \{1\},&&
   \Incr_\delta((\delta, +\infty) ) = \{0\}.
 \end{align} 
 We further denote $\Incr_\delta^ {\mathbb R^+} = \restrict{\Incr_\delta}{\mathbb R_+}$ (it satisfies $\Incr_\delta^ {\mathbb R^+} (0)=[1,\infty)$).
\begin{lemma}\label{lem:incr_to_Incr}
  Given $\alpha\in \mathcal M_{\mathbb P}$ and $\delta\in \mathbb R$:
  \begin{align*}
    \alpha \boxplus \Incr_\delta
    =\min(\alpha \circ (\id-\delta), 1).
  \end{align*}
  If we further assume that $\alpha\in \mathcal M_{\mathbb P_+}$ and $\delta>0$:
  \begin{align*}
    \alpha \boxtimes \Incr_\delta^{\mathbb R_+} 
    &= \min \left( \alpha \circ \frac{\id}{\delta}, 1\right).
  \end{align*}
  In particular, if $\alpha\in \mathcal M_{\mathbb P_+}$ satisfies $\alpha\leq 1$ and $\delta > 0$:
  \begin{align*}
    \alpha \boxplus \Incr_\delta = \alpha \boxplus \Incr_\delta^{\mathbb R_+}=\alpha \circ (\id-\delta)
    &&\et&&
    \alpha \boxtimes \Incr_\delta^{\mathbb R_+} 
    = \alpha \circ \frac{\id}{\delta}.
  \end{align*}
\end{lemma}
\begin{proof}
  In this proof we will repeatedly identify any constant $c\in \mathbb R$ with the operator $x\mapsto \{c\}$. Then $c^{-1}$ is the operator whose domain is $\dom(c^{-1}) = \{c\}$ and $c^{-1}(c) = \mathbb R$. Note that $c, c^{-1} \in \mathcal M_\uparrow\cap \mathcal M_\downarrow$.

  Let us first prove the identity:
  \begin{align*}
    \Incr_\delta = \min(\max(0, \delta^{-1}), 1).
  \end{align*}
  Then, first note that:
  \begin{align}\label{eq:sum_delta}
    \alpha\boxplus \delta^{-1} = (\alpha^{-1}+\delta)^{-1} 
    = ((\id+\delta)\circ \alpha)^{-1}
    = \alpha\circ(\id-\delta).
  \end{align}
  Second, for any $\alpha,\beta\in \mathcal M_{\mathbb {P}}$, recall that $\ran(\alpha),\ran(\beta)\subset \mathbb R_+$ and:
  \begin{align*}
    \max(\alpha \boxplus 0, \beta)=\beta,
  \end{align*}
  because if $0\in \ran(\alpha)$, then $\alpha \boxplus 0 = (\alpha^{-1}+0^{-1})^{-1} = 0$ and if $0\notin \ran(\alpha)$, then $\alpha \boxplus 0=\emptyset$, and, by convention, $\max(\emptyset, \beta)=\beta$.
  Third, for similar reasons:
  \begin{align*}
    \min(\beta, \alpha \boxplus 1) = \min(\beta, 1).
  \end{align*}
  Combining these identities with Proposition~\ref{pro:operation_and_minimum} yields
  \begin{align*}
    \alpha \boxplus \Incr_\delta
    =  \min(\max(\alpha\boxplus 0, \alpha\boxplus \delta^{-1}), \alpha\boxplus 1)
    = \min(\alpha(\id - \delta), 1).
  \end{align*}
To treat the case $\alpha\in\mathcal{M}_{\mathbb P_+}$ and $\delta>0$, we rely on the identity
  \begin{align*}
    \Incr_\delta^{\mathbb R_+} = \min \left( \max(0, \delta^{-1}), \max(1, 0^{-1})  \right).
  \end{align*}
  We have:
  \begin{align*}
    \alpha \boxtimes \Incr_\delta^{\mathbb R_+}
    &=\min \left( \alpha \boxtimes\max(0, \delta^{-1}), \alpha \boxtimes\max(1, 0^{-1})  \right)\\
    &=\min \left( \alpha \circ \frac{\id}{\delta}, \max(1, 0^{-1})  \right)\ \
    =\min \left( \alpha \circ \frac{\id}{\delta}, 1  \right).
  \end{align*}
\end{proof}

\begin{lemma}[Pivot to Tail]\label{lem:variable_centre_0}
  Given a random variable $\Lambda\in \mathbb R$, a probabilistic operator $\alpha \in \mathcal M_{\mathbb P_+}$, 
  and a parameter $\delta \in \mathbb R$:
  \begin{align*}
   S_{|\Lambda - \delta|} \leq \alpha
    && \Longrightarrow &&
    S_{|\Lambda|} \leq \alpha \boxplus \Incr_{\left\vert \delta \right\vert}.
  \end{align*}
\end{lemma}
\begin{proof}
  Note that $|\Lambda| \leq \left\vert \Lambda - \delta \right\vert + |\delta|$, therefore, Lemma~\ref{lem:incr_to_Incr} allows us to conclude that, for all $t\geq 0$:
  \begin{align*}
    \mathbb P \left(  |\Lambda|  > t \right)
    &\leq \mathbb P \left( \left\vert \Lambda - \delta \right\vert + |\delta|> t\right) \\
    &\leq \min \left( 1, \alpha \left( \max \left( t - |\delta|, 0 \right) \right) \right)
    \ \leq \alpha \boxplus \Incr_{|\delta|}.
    \end{align*}
    and we conclude with Corollary~\ref{cor:inequality_on_survival}.
\end{proof}

\begin{remark}\label{rem:product_with_incr}
Note, in particular, from Proposition~\ref{pro:distributivity_parallel_sum_product}, that for any $\alpha \in \mathcal M_{\mathbb P_+}$ and any $\delta, \eta >0$:
\begin{align*}
  \alpha \boxtimes \left( \Incr_\delta \boxplus \left( \alpha \circ \frac{\id}{\eta} \right) \right) 
  &\leq \alpha \boxtimes \left( \Incr_\delta^{\mathbb R_+} \boxplus \left( \alpha \circ \frac{\id}{\eta} \right) \right) \\
  &= \left(\alpha \boxtimes  \Incr_\delta^{\mathbb R_+} \right)\boxplus \left(\alpha \boxtimes  \alpha \circ \frac{\id}{\eta} \right) 
  \leq  \alpha \circ \frac{\id}{\delta} \boxplus \alpha \circ \sqrt{\frac{\id}{\eta}}.
\end{align*} 
Once again we obtain a formula very similar to the Hanson--Wright theorem. This is precisely the structural mechanism that leads to such two-regime concentration.

\end{remark}

\subsection{Concentration in high dimension}\label{sub:concentration_in_high_dimension}
Concentration phenomena are most meaningful in high dimensions. As Talagrand observed in \cite{talagrand1996new}, “A random variable that depends (in a ‘smooth’ way) on the influence of many independent variables (but not too much on any of them) is essentially constant.” We recall two fundamental results; see \cite{ledoux2005concentration} for more
examples and \cite{gozlan2017kantorovich,gozlan2018characterization} for recent, general characterizations.

The concentration of a random variable $Z$ on a metric space is expressed through the concentration of $f(Z)\in\mathbb{R}$ for functions $f$ belonging to a given regularity class.

The random variables $f(Z)$ are classically called ``observations'' of $Z$.
Depending on which class of observations is required to satisfy the concentration inequality, one obtains different notions of concentration, typically, from strongest to weakest: Lipschitz (see Theorem~\ref{the:concentration_gaussian}), convex (see Theorem~\ref{the:concentration_talagrand}), and linear (see Theorems~\ref{the:hanson_Wright_adamczac_general_alpha} and 
\ref{the:hanson_Wright_adamczac_integrable_alpha}).

Unless otherwise specified, we endow $\mathbb R^n$ with the Euclidean norm.
\begin{theorem}[Lipschitz Gaussian concentration\cite{ledoux2005concentration}]\label{the:concentration_gaussian}
  Given a standard Gaussian vector $Z \sim \mathcal N(0, I_n)$, for any $1$-Lipschitz mapping $f:\mathbb R^ n \to \mathbb R$ and any median $m_f$ of $f(Z)$:
  \begin{align*}
    \forall t\geq 0: \mathbb P \left( |f(Z) - m_f|\geq t \right)  \leq 2 e^{- t^2/2}.
  \end{align*}
  Given an independent copy $Z'$ of $Z$, one further has:
  \begin{align*}
    \forall t\geq 0: \mathbb P \left( |f(Z) - f(Z')|\geq t \right)  \leq 2 e^{- t^2/4}.
  \end{align*}
\end{theorem}
The second concentration inequality simply results from the fact that $f(Z)-f(Z')$ is a $\sqrt{2}$-Lipschitz functional of $(Z,Z')$ admitting $0$ as median. More generally, if $\Phi:\mathbb R^n\to M$ is $\lambda$-Lipschitz into a metric space $(M,d)$, then for any $g:M\to \mathbb R$, $1$-Lipschitz, $S_{|g(\Phi(Z))-g(\Phi(Z'))|}\le \alpha\circ(\id/\lambda)$ with $\alpha(t)=2e^{-t^2/4}$. 
This ``stability through Lipschitz transformation'' property also holds for the next exponential-type inequality.

\begin{theorem}[Lipschitz Exponential concentration\cite{bobkov1997isoperimetric}, (1.8)]\label{the:exponential_concentration}
  For any $n\in \mathbb N$, for any random vector $Z\in \mathbb R^n$ whose entries are i.i.d. with Laplace (``exponential'') density $t\mapsto \frac{1}{2}e^{-|t|}$, for any $1$-Lipschitz mapping $f: \mathbb R^n \to \mathbb R$ and any independent copy $Z'$ of $Z$:
  \begin{align*}
    \forall t\geq 0: \qquad \mathbb P \left( |f(Z) - f(Z')|\geq t \right) \leq e^{- c_{\nu}t },
  \end{align*}
  where $c_\nu \equiv \frac{1}{8\sqrt 3}$.
\end{theorem}
This second concentration result will be exploited in Subsection~\ref{sec:heavy_tailed_random_vector_concentration} to set sharper heavy-tailed concentration inequalities that one would obtain relying solely on Theorem~\ref{the:concentration_gaussian}.
The bound can be improved if one assumes in addition that $f$ is $\lambda$-Lipschitz for the $\ell_1$ norm, but we will not need this refinement.

We next recall Talagrand’s convex concentration for product measures that concerns only the concentration of Lipschitz \textit{and convex} observation. Although this result could seem weaker it allows to study discrete distributions that can not be obtained through Lipschitz transformation of the Gaussian or Laplace vectors mentioned in Theorems~\ref{the:concentration_gaussian} and~\ref{the:exponential_concentration}.
\begin{theorem}[Talagrand Theorem\cite{TAL95}]\label{the:concentration_talagrand}
  Given a random vector $Z \in [0,1]^n$ with independent entries and a $1$-Lipschitz and convex mapping $f: \mathbb R^ n \to \mathbb R$:
  \begin{align*}
    \forall t\geq 0: \mathbb P \left( |f(Z) - \mathbb E[f(Z)]|\geq t \right)  \leq 2 e^{- t^2/4}.
  \end{align*}
  Given any $Z'$, an independent copy of $Z$, one further has:
  \begin{align*}
    \forall t\geq 0: \mathbb P \left( |f(Z) - f(Z')|\geq t \right)  \leq 2 e^{- t^2/8}.
  \end{align*}
\end{theorem}
These concentration inequalities could equally well be restricted to $1$-Lipschitz and concave $f$.
\begin{remark}\label{rem:convex_concentration_stability}
  This theorem can be generalized to any random vector $AZ + b$, for deterministic $A \in \mathcal{M}_{n}$ and $b\in \mathbb{R}^n$ with $\|A\|\le 1$ (the convexity of $f\circ \Phi$ when $f$ is convex cannot be ensured for a general transformation $\Phi$).
One can summarize this by saying that the class of convexly concentrated random vectors is stable under bounded affine transformations.

  For some specific transformations $\Phi$ that preserve some convexity properties it is sometimes possible to show the linear concentration of $\Phi(Z)$ (for instance when $\Phi$ is built from entrywise products or matrix products as in Theorems~\ref{the:hanson_Wright_adamczac_general_alpha} and 
\ref{the:hanson_Wright_adamczac_integrable_alpha}, one can refer to \cite[Theorem 1]{meckes2013concentration} for more general results on polynomials of random matrices). 
\end{remark}

Finally, in finite dimension\footnote{One could provide a definition of the expectation easily in any reflexive space or even any vector space of functions taking value in a reflexive space. However, for the definition, we require $u \mapsto \mathbb E[u(X)]$ to be continuous on $E^*$ (the dual set of $E$). Without further information on $\mathbb E[u(X)]$ (like a bound) the lemma can only be true on a finite dimensional space where all linear forms are continuous. If instead of assuming that for all linear mapping $u: E\to \mathbb R$ satisfying $\|u\|\leq 1$, for all $t>0$, $\mathbb P \left( \left\vert u(X -X') \right\vert>  t \right)\leq \alpha(t)$, one rather assumes $\mathbb P \left( \left\vert u(X -\tilde X) \right\vert>  t \right)\leq \alpha(t)$, then $X$ is in a sense centered, and it is possible to deduce the result in a general reflexive space.}, linear observations concentrate around their means. This follows from Lemma~\ref{lem:concentration_expectation_random_variable_case}.
\begin{lemma}\label{lem:linear_concentration_inferior_lipschitz_concentration}
  Given a finite-dimensional vector space $E$, a random vector $X \in E$, an integrable $\alpha \in \mathcal M_{\mathbb P_+}$ such that for all $1$-Lipschitz mapping $f:E \to \mathbb R$, and any median of $f(X)$, $m_f$, $S_{\left\vert f(X) -m_f \right\vert}\leq \alpha$, 
  then $\mathbb E[X] \in E$ is well defined and one has the concentration for any linear form\footnote{The space $\mathcal L^1(E, \mathbb R)$ is exactly the dual space of $E$, usually denoted $E^*$.} $u \in \mathcal L^1(E, \mathbb R)$, such that $\|u\|\leq 1$:
  \begin{align*}
    S_{\left\vert u(X -\mathbb  E[X]) \right\vert}\leq \frac{\alpha \left( \frac{\id}{2} \right)}{\min \left( 1, \alpha(\int\alpha)\right)}  .
  \end{align*}
\end{lemma}
Pursuing the idea of Lemma~\ref{lem:moments_alpha_moments_X} that concerned scalar random variables, given a random vector $X\in \mathbb R^n$ and $d\in \mathbb N$ let us bound the moments $M_d^X\in \mathcal L^d(\mathbb R^d,\mathbb R)$ of this vector that we defined as the $d$-linear maps:
\begin{align*}
  \forall u_1,\ldots, u_d\in \mathbb R^d:\qquad M_d^X(u_1, \ldots, u_d) = \mathbb E \left[ u_1^T(X-\mathbb E[X]) \cdots u_d^T(X-\mathbb E[X]) \right],
\end{align*}
Linear concentration of $X$ directly yields bounds on $\|M_d^X\|$ thanks to Lemma~\ref{lem:moments_alpha_moments_X} (and the fact that $M_d^X$ being symmetric, one can take $u_1=\cdots=u_d$ when bounding the norm $\|M_d^X\|$). See \eqref{eq:linear_map_operator_norm} for a definition of the operator norm of $d$-linear maps.
\begin{proposition}[Concentration in terms of moments]\label{pro:bound_moments_linear_concentration}
  Given a random vector $X\in \mathbb R^n$ and $\alpha\in \mathcal M_{\mathbb P_+}$ such that for any $u\in \mathbb R^n$ such that $\|u\|\leq 1$, $S_{|u^T(X-\mathbb EX)|}\leq \alpha$, one has the bound, for any $d\in \mathbb N$:
  \begin{align*}
    \|M^X_d\| \leq M_d^\alpha.
  \end{align*}
\end{proposition}

We now extend concentration to certain non‑Lipschitz transformations via local control.
\begin{theorem}[Concentration under Randomized Lipschitz Control]\label{the:concentration_bounded_variations}
  Let $(E,d)$ be a metric space, $Z\in E$, a random variable and $\Lambda_1,\dots,\Lambda_n:E\to\mathbb R_+$ measurable.
  Assume there exist $\alpha,\beta_1,\dots,\beta_n\in\mathcal M_{\mathbb P_+}$ such that, for every
  $1$‑Lipschitz $f:E\to\mathbb R$ and independent copy $Z'$ of $Z$:
  \begin{align*}
      S_{\left\vert f(Z) - f(Z') \right\vert} \leq  \alpha,\quad &
      &\text{and}&
      &\forall k \in [n]:\quad S_{\Lambda_k(Z)} \leq \beta_k.
  \end{align*}
  Given another metric space $(E', d')$, and $\Phi: E \to E'$, if we assume that $\forall z,z' \in E$:
  \begin{align}\label{bounding_hypothesis_lambda_1_lambdan}
  d'(\Phi(z), \Phi(z')) \leq \max(\Lambda_1(z),\Lambda_1(z') )\cdots \max(\Lambda_n(z),\Lambda_n(z') ) \cdot d(z, z'),
  \end{align}
  then for any $g: E'\to \mathbb R$, $1$-Lipschitz:
  \begin{align*}
    S_{\left\vert g(\Phi(Z)) - g(\Phi(Z')) \right\vert} \leq (2n+1) \cdot \alpha \boxtimes  \beta_1\boxtimes  \cdots\boxtimes  \beta_n.
  \end{align*}
\end{theorem}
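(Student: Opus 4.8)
The plan is to fix a $1$-Lipschitz mapping $g:E'\to\mathbb R$, put $h\equiv g\circ\Phi:E\to\mathbb R$, and turn the merely \emph{conditional} Lipschitzness of $h$ implied by \eqref{bounding_hypothesis_lambda_1_lambdan} into a genuine concentration bound via a truncation-and-extension argument. Given deterministic thresholds $s_1,\dots,s_n>0$, set $\sigma\equiv s_1\cdots s_n$ and $S\equiv\{z\in E:\ \Lambda_k(z)\le s_k\text{ for all }k\in[n]\}$, a deterministic subset of $E$. For $z,z'\in S$ one has $\max(\Lambda_k(z),\Lambda_k(z'))\le s_k$ for each $k$, so \eqref{bounding_hypothesis_lambda_1_lambdan} combined with $|g(x)-g(y)|\le d'(x,y)$ gives $|h(z)-h(z')|\le\sigma\, d(z,z')$; thus $h$ is $\sigma$-Lipschitz on $S$, and I would take its McShane extension $\tilde h(z)\equiv\inf_{w\in S}\big(h(w)+\sigma\, d(z,w)\big)$, a \emph{deterministic}, everywhere $\sigma$-Lipschitz function on $E$ coinciding with $h$ on $S$ (the case $S=\emptyset$ is degenerate and handled at the end).

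Because $\tilde h/\sigma$ is $1$-Lipschitz and deterministic, the concentration hypothesis for $\alpha$ gives $\mathbb P(|\tilde h(Z)-\tilde h(Z')|>t)\le\alpha(t/\sigma)$, all inequalities being read in the operator sense (a scalar bounded by a set means bounded by its infimum, exactly as in Proposition~\ref{pro:concentration_sum_prod}). Since $\tilde h=h$ on $S$, since $Z'$ is distributed as $Z$, and since the union bound with the hypotheses on the $\beta_k$ gives $\mathbb P(Z\notin S)+\mathbb P(Z'\notin S)\le2\sum_{k=1}^n\beta_k(s_k)$, splitting on $\{Z\in S,\ Z'\in S\}$ and its complement yields, for every choice of $s_1,\dots,s_n$,
\[
  \mathbb P\big(|h(Z)-h(Z')|>t\big)\ \le\ \alpha\!\left(\frac{t}{s_1\cdots s_n}\right)+2\sum_{k=1}^n\beta_k(s_k).
\]

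It remains to optimise the thresholds, and here I would follow the operator argument of the proof of Proposition~\ref{pro:concentration_sum_prod} almost verbatim. Set $\gamma\equiv\alpha\boxtimes\beta_1\boxtimes\cdots\boxtimes\beta_n$, a positive probabilistic operator by Lemma~\ref{lem:stability_prob_op_parallel_op} and associativity (Lemma~\ref{lem:associativity_parallel_sum_product}); in particular $\gamma$ is maximally decreasing, Theorem~\ref{the:dense_subset_function} applies, and (as in that proof) every $t\in\dom(\gamma)$ admits some $u>t$ in $\dom(\gamma)$. Fix $t\in\dom(\gamma)$ and suppose first that $\gamma(t)=\{y\}$ is a singleton. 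Since $\gamma^{-1}=\alpha^{-1}\cdot\beta_1^{-1}\cdots\beta_n^{-1}$, Lemma~\ref{lem:order_fm1f_id} gives $t\in\gamma^{-1}(\gamma(t))=\alpha^{-1}(y)\cdot\beta_1^{-1}(y)\cdots\beta_n^{-1}(y)$, so there are $a\in\alpha^{-1}(y)$ and $b_k\in\beta_k^{-1}(y)$ with $t=a\,b_1\cdots b_n$. Plugging $s_k=b_k$ into the displayed bound (so that $t/(s_1\cdots s_n)=a$) and using $y\in\alpha(a)$, $y\in\beta_k(b_k)$ — hence $\inf\alpha(a)\le y$ and $\inf\beta_k(b_k)\le y$ — gives $\mathbb P(|h(Z)-h(Z')|>t)\le y+2ny=(2n+1)\gamma(t)$. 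For a general $t\in\dom(\gamma)$ one chooses $u>t$ in $\dom(\gamma)$ and, by Theorem~\ref{the:dense_subset_function}, a subset $C\subset(t,u)$ dense in $(t,u)$ on which $\gamma$ is singleton-valued, applies the previous step at every $v\in C$, and passes to the limit using the right-continuity of $v\mapsto\mathbb P(|h(Z)-h(Z')|>v)$ and $\gamma(v)\le\gamma(t)$ for $v>t$, exactly as at the end of the proof of Proposition~\ref{pro:concentration_sum_prod}. As $h=g\circ\Phi$, this is the claimed inequality.

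The one genuinely delicate point is the interface between the pointwise, set-valued choice of thresholds produced by Lemma~\ref{lem:order_fm1f_id} and the target inequality: the clean constant $(2n+1)$ is only available when $\gamma(t)$ is a single point, which forces the density-plus-semicontinuity detour; everything else is routine — the McShane extension is $\sigma$-Lipschitz on $E$ and equal to $h$ on $S$, rescaling by $1/\sigma$ turns $\alpha$ into $\alpha(\cdot/\sigma)$, and in the degenerate case $S=\emptyset$ one has $1=\mathbb P(Z\notin S)\le\sum_k\inf\beta_k(b_k)\le ny$, whence $(2n+1)\gamma(t)\ge2>1\ge\mathbb P(|h(Z)-h(Z')|>t)$ and the bound is trivial.
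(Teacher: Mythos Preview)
Your proof is correct and follows essentially the same route as the paper's: define a good set where $\Phi$ is $\sigma$-Lipschitz, extend via McShane (the paper uses the $\sup$ form in Lemma~\ref{lem:continuation_Lipschitz}, you use the $\inf$ form, both are fine), split the probability into the good set and its complement, and optimise the thresholds through the parallel-product structure $\gamma=\alpha\boxtimes\beta_1\boxtimes\cdots\boxtimes\beta_n$. Your treatment is in fact slightly more complete, since you spell out the density-plus-right-continuity argument for non-singleton $\gamma(t)$ (which the paper leaves as an exercise) and you explicitly handle the degenerate case $S=\emptyset$; one tiny point you might add for completeness is that for $t>0$ the factorisation $t=a\,b_1\cdots b_n$ automatically forces $a,b_1,\dots,b_n>0$, so $\sigma>0$ and the extension is well defined.
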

In the Hanson--Wright setting, $E = \mathbb R^n$, $\Phi: X\mapsto X^TAX$, for some deterministic matrix $A \in \mathcal M_n$ and $\Lambda_1: z \mapsto  2\|Az\|$  (then $|\Phi(Z) - \Phi(Z')| \leq 2 \max(\Lambda_1(Z), \Lambda_1(Z'))\|Z-Z' \|$).

Theorem~\ref{the:concentration_bounded_variations} is a simple consequence of the following result that was employed for similar purposes in \cite{guionnet2009large}. Refer to \cite[Theorem 1]{mcshane1934extension} for the proof.
\begin{lemma}[Extension of Lipschitz maps]\label{lem:extension_Lipschitz}
  In a metric space $(E, d)$, given a subset $A \subset E$ and a mapping $f: A \to \mathbb R$, $\lambda$-Lipschitz, $\lambda > 0$ the extension $\tilde f : E \to \mathbb R$ defined as:
  \begin{align}\label{eq:lipschitz_extension}
    \forall x\in E : \tilde f(x) = \sup_{y \in A} (f(y) - \lambda d(x,y))
  \end{align}
  is $\lambda$-Lipschitz, and satisfies $\forall x \in A:$ $\tilde f(x) = f(x)$.
\end{lemma}
\begin{proof}[Proof of Theorems~\ref{the:concentration_bounded_variations}]
  Let us introduce the notation $\gamma = \alpha \boxtimes  \beta_1 \boxtimes  \cdots \boxtimes  \beta_n$. Considering $t \in \dom(\gamma) = \ran(\alpha^ {-1} \cdot \beta_1^{-1}\cdots \beta_n^ {-1})$, 
  there exists $u\in \dom(\alpha^ {-1} \cdot \beta_1^{-1}\cdots \beta_n^ {-1})$ such that:
   \begin{align}\label{eq:theta_alpha_t}
     \exists t_\alpha \in \alpha^ {-1}(u), \, \forall i\in [n], \exists t_i\in \beta_i^ {-1}(u):\qquad t=t_\alpha t_1\cdots t_n
     \in \gamma^ {-1}(u).
   \end{align} 
   Introducing the set $\mathcal A \equiv \{z \in  E \ | \ \forall k\in [n], \Lambda_k(z) < t_k\}$, one gets:
   \begin{align}\label{eq:P_non_lipschitz_variation_concentrated}
      \mathbb P \left( \left\vert f(\Phi(Z)) - f(\Phi(Z')) \right\vert > t\right)
      &\leq \mathbb P \left( \left\vert f(\Phi(Z)) - f(\Phi(Z')) \right\vert > t, \  (Z,Z') \in \mathcal A^2\right)\\
      &\hspace{0.5cm}\hspace{1cm}
      + \mathbb P \left( \left\vert f(\Phi(Z)) - f(\Phi(Z')) \right\vert > t , \  (Z,Z') \notin \mathcal A^2\right). \nonumber
   \end{align}
   Denoting $t_\beta \equiv t_1\cdots t_n$, we know from \eqref{bounding_hypothesis_lambda_1_lambdan} that $f\circ \Phi$ is $t_\beta$-Lipschitz on $\mathcal A$.
   Let us then denote $\tilde f$, the $t_\beta$-Lipschitz extension of $\restrict{f\circ \Phi}{\mathcal A}$ defined in Lemma~\ref{lem:extension_Lipschitz}, 
   we can rescale the concentration inequality for this $t_\beta$-Lipschitz mapping (see Lemma~\ref{lem:concentration_lipschitz_transformation}) and obtain:
   \begin{align*}
     \mathbb P \left( \left\vert f(\Phi(Z)) - f(\Phi(Z')) \right\vert > t, \  (Z,Z') \in \mathcal A^2\right)
     &\leq \mathbb P \left( \left\vert \tilde f(Z) - \tilde f(Z') \right\vert > t\right)\\
     &\leq \alpha \left( \frac{t}{t_\beta} \right)
     = \alpha(t_\alpha)\ni u,
   \end{align*}
   since we know from~\eqref{eq:theta_alpha_t} that $\frac{t}{t_\beta}= t_\alpha$ and $u\in \alpha (t_\alpha) $.

   We can also use the assumptions on the concentration of $\Lambda_1(Z),\ldots,\Lambda_n(Z)$ to bound
   \begin{align*}
     &\mathbb P \left( \left\vert f(\Phi(Z)) - f(\Phi(Z')) \right\vert > t , \  (Z,Z') \notin \mathcal A^2\right)\leq \mathbb P \left( (Z,Z') \notin \mathcal A^2\right)\\
     &\hspace{1.5cm}\leq \mathbb P \left( \max(\Lambda_1(Z), \Lambda_1(Z')) \geq t_1\right) + \cdots +\mathbb P \left(\max(\Lambda_n(Z), \Lambda_n(Z')) \geq  t_n\right)\\
     &\hspace{1.5cm}\leq 2\cdot \beta_1(t_1) +\cdots + 2\cdot \beta_n(t_n) 
     \ni 2n u,
   \end{align*}
   again, since \eqref{eq:theta_alpha_t} yields $\forall i\in [n]: u\in  \beta_i(t_i)$.

   Combining the two last inequalities with \eqref{eq:P_non_lipschitz_variation_concentrated} we get $(2n+1) u \in S_{\left\vert f(\Phi(Z)) - f(\Phi(Z')) \right\vert}(t)_+$. Now \eqref{eq:theta_alpha_t} provides $u\in \gamma(t)$ and allows us to deduce that 
   \begin{align*}
      (2n+1) \gamma(t)\cap S_{\left\vert f(\Phi(Z)) - f(\Phi(Z')) \right\vert}(t)_+\neq \emptyset,
    \end{align*} and conclude with Corollary~\ref{cor:inequality_on_survival}.
\end{proof} 

To adapt Theorem~\ref{the:concentration_bounded_variations} to convex concentration, we need
a convex and $1$‑Lipschitz extension.
When the original mapping is differentiable, a good suggestion was provided in \cite{adamczak2015note}; we will adapt their definition to merely convex settings in Lemma~\ref{lem:convex_Lipschitz_extension} thanks to the notion of subgradient that we recall below.
\begin{definition}[Subgradient]\label{def:subgradient}
  Given an Euclidean vector space $E$, a convex mapping $f: E \to \mathbb R \cup \{-\infty, +\infty\}$ and a point $x \in E$, the subgradient of $f$ on $x$ is the set:
  \begin{align*}
    \partial f(x) = \left\{ g \in E, \forall y \in E: f(y)\geq f(x) + \langle g, y-x \rangle \right\}.
  \end{align*} 
\end{definition}
The subgradient is well suited to the study of convex Lipschitz mappings thanks to the following property. 
\begin{lemma}\label{lem:subgrad_non_empty}
  Given an Euclidean vector space $E$, $A \subset E$ an open set and $f :A \to \mathbb R$ convex and $\lambda$-Lipschitz, 
   for all $x \in A$, $\partial f(x)\neq \emptyset$ and $g \in \partial f(x) \implies \|g\| \leq \lambda$.
\end{lemma}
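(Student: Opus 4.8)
The plan is to construct a candidate subgradient from the one-sided directional derivatives of $f$ at $x$ and then invoke a Hahn--Banach type extension. Both conclusions --- that $\partial f(x)$ is nonempty and that every element has norm at most $\lambda$ --- come out of the same construction, so I would not treat them separately.

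First I fix $x\in A$ and choose $r>0$ with $B(x,r)\subset A$, which is possible since $A$ is open. For $v\in E\setminus\{0\}$ and $0<t<r/\|v\|$, put $q_v(t)=\bigl(f(x+tv)-f(x)\bigr)/t$. Convexity of $f$ makes $t\mapsto q_v(t)$ nondecreasing on its domain, while the $\lambda$-Lipschitz hypothesis gives $|q_v(t)|\le\lambda\|v\|$; hence the limit $\delta(v)=\lim_{t\to0^+}q_v(t)=\inf_t q_v(t)$ exists and satisfies $|\delta(v)|\le\lambda\|v\|$. Extending by $\delta(0)=0$, I would check that $\delta:E\to\mathbb R$ is sublinear: positive homogeneity is immediate, and subadditivity follows from $x+t(v+w)=\tfrac12(x+2tv)+\tfrac12(x+2tw)$ together with convexity of $f$, which gives $q_{v+w}(t)\le q_v(2t)+q_w(2t)$, and then letting $t\to0^+$.

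Since $\delta$ is a finite sublinear functional on the whole space $E$, a standard extension argument (Hahn--Banach, trivial here as $E$ is finite-dimensional) yields a linear form $g$ with $g(v)\le\delta(v)$ for all $v\in E$; I identify $g$ with a vector of $E$ via the euclidean inner product. The bound $\|g\|\le\lambda$ is then immediate, since $\langle g,v\rangle\le\delta(v)\le\lambda\|v\|$ for every $v$. For membership in $\partial f(x)$ (Definition~\ref{def:subgradient}), take $y\in A$ with the segment $[x,y]\subset A$, set $v=y-x$, and use that $q_v$ is nondecreasing on $(0,1]$ to get $f(y)-f(x)=q_v(1)\ge\delta(v)\ge\langle g,y-x\rangle$, i.e.\ $f(y)\ge f(x)+\langle g,y-x\rangle$. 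Thus $g\in\partial f(x)$, and in particular $\partial f(x)\neq\emptyset$.

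I expect no genuinely hard step: this is the classical derivative-based construction of a subgradient. The only point to watch is the passage from the subgradient inequality near $x$ to the inequality on all of $A$, which is why the argument above is written along segments $[x,y]\subset A$; if $A$ is open and convex (or, more generally, if one only needs the local statement invoked in Lemma~\ref{lem:convex_Lipschits_continuation}) this reduces to nothing.
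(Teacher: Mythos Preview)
Your construction of the directional derivative $\delta$ and the Hahn--Banach extension is correct and gives a self-contained proof that $\partial f(x)\neq\emptyset$, whereas the paper simply cites \cite{bertsekas2009convex}, Proposition~5.4.1, for this part. So on nonemptiness your argument is more informative.

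There is, however, a genuine gap on the second claim. You assert that ``every element has norm at most $\lambda$'' comes out of the same construction, but what you actually wrote only bounds the norm of the \emph{particular} $g$ produced by Hahn--Banach. The lemma requires the bound for \emph{every} $g\in\partial f(x)$. The fix is short and uses your own machinery: any $g'\in\partial f(x)$ satisfies $\langle g',v\rangle\le q_v(t)$ for all admissible $t$ (directly from the subgradient inequality at $y=x+tv$), hence $\langle g',v\rangle\le\delta(v)\le\lambda\|v\|$, which gives $\|g'\|\le\lambda$. You should write this out explicitly.

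For comparison, the paper's argument for the norm bound is more direct and does not pass through $\delta$ at all: given $g\in\partial f(x)$, apply the subgradient inequality at $y=x+g/\|g\|$ (or a suitable rescaling so that $y\in A$) to get
\[
\lambda\ \ge\ \bigl|f(x)-f\!\left(x+\tfrac{g}{\|g\|}\right)\bigr|\ \ge\ \Bigl\langle g,\tfrac{g}{\|g\|}\Bigr\rangle\ =\ \|g\|.
\]
This handles arbitrary $g$ in one line. Your route via directional derivatives buys you the existence part without an external reference; the paper's route buys brevity on the norm bound and covers all subgradients immediately.

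Your closing caveat about segments $[x,y]\subset A$ is appropriate: the very notion ``$f:A\to\mathbb R$ convex'' already presupposes $A$ convex, so this is not an additional restriction in practice.
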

\begin{proof}
  The non emptiness is provided for instance in \cite{bertsekas2009convex}, Proposition 5.4.1. Now, note that given $x \in A$ and $g \in \partial f(x)$, since $A$ is open, one can consider $\delta>0$ small enough such that $x + \frac{\delta g}{\|g\|}\in A$ and:
  \begin{align*}
    \delta\lambda  \geq \left\vert f(x) - f \left( x + \frac{\delta g}{\|g\|} \right) \right\vert \geq \left\langle g,  \frac{\delta g}{\|g\|} \right\rangle = \delta\|g\|.
  \end{align*}
\end{proof}
This lemma allows us to define rigorously our Lipschitz convex extension.

\begin{lemma}\label{lem:convex_Lipschitz_extension}
  Given an Euclidean vector space $E$, a non-empty open set $A \subset E$ and a $\lambda$-Lipschitz, convex mapping $f : A \to \mathbb R$, the mapping:
  \begin{align}\label{eq:convex_lipschitz_extension}
    \tilde f : 
    \begin{aligned}[t]
      E & \longrightarrow & \mathbb R \cup \{+\infty\}\hspace{2.5cm}&\\
      y  &\longmapsto &\sup_{\genfrac{}{}{0pt}{2}{x\in A}{g \in \partial f(x)}} \langle g, y-x \rangle  + f(x)&
    \end{aligned}
  \end{align}
    is convex, $\lambda$-Lipschitz, and satisfies:
    \begin{align*}
      \forall x \in A: \quad \tilde f(x) = f(x).
    \end{align*}
\end{lemma}
\begin{proof}
  First, the convexity is obvious as convexity is stable through supremum. Second, the triangle inequality satisfied by suprema allows us to establish that $\forall y,z \in E$:
  \begin{align*}
    \left\vert \tilde f(y) - \tilde f(z) \right\vert
    &= \big\vert \sup_{\genfrac{}{}{0pt}{2}{x\in A}{g \in \partial f(x)}}\langle g, y-x\rangle + f(x)- \sup_{\genfrac{}{}{0pt}{2}{x'\in A}{g \in \partial f(x')}}\langle g, z-x' \rangle + f(x') \big\vert\\
    &\leq  \big\vert \sup_{\genfrac{}{}{0pt}{2}{x\in A}{g \in \partial f(x)}}\langle g, y-z\rangle \big\vert 
     \leq \lambda \|y-z\|,
  \end{align*}
  thanks to Lemma~\ref{lem:subgrad_non_empty}.
  Finally, for all $y \in A$ and for all $g \in \partial f(y)$:
  \begin{align*}
    f(y) =  f(y) + \langle g, y-y\rangle \leq \sup_{\genfrac{}{}{0pt}{2}{x\in A}{g \in \partial f(x)}} \langle g, y-x\rangle + f(x)  \leq f(y),
  \end{align*}
  by definition of the subgradient. In other words $f(y)\leq \tilde f(y) \leq f(y)$ which implies $f(y) = \tilde f(y)$.
\end{proof}
\begin{theorem}[Convex Concentration under Randomized Lipschitz Control]\label{the:concentration_bounded_variations_convex}
  Let $E$ be a Euclidean space, $Z\in E$, a random vector and $\Lambda_1,\dots,\Lambda_n:E\to\mathbb R_+$ continuous.
  Assume $\alpha,\beta_1,\dots,\beta_n\in\mathcal M_{\mathbb P_+}$ such that for every $1$‑Lipschitz
  convex $f:E\to\mathbb R$ and independent $Z'$, we have $S_{|f(Z)-f(Z')|}\le \alpha$ and
  $S_{\Lambda_k(Z)}\le \beta_k$. If $\Phi:E\to\mathbb R$ is convex and satisfies the same
  local Lipschitz control as \eqref{bounding_hypothesis_lambda_1_lambdan}, then:
  \begin{align*}
    S_{\left\vert \Phi(Z) - \Phi(Z') \right\vert}\leq (2n+1) \cdot \alpha \boxtimes  \beta_1 \boxtimes \cdots \boxtimes \beta_n.
  \end{align*}
\end{theorem}
\begin{proof}
The proof is very similar to that of Theorem~\ref{the:concentration_bounded_variations}, except that we must now check that the sets $\mathcal A \equiv \{z \in  E \ | \ \forall k\in [n], \Lambda_k(z) < t_k\}$ are open in order to employ Lemma~\ref{lem:convex_Lipschitz_extension} instead of Lemma~\ref{lem:extension_Lipschitz}. 
Simply rewrite:
\begin{align*}
  \mathcal A =  \bigcap_{k\in[n]} \Lambda_k^{-1}\left((-\infty, t_k)\right),
\end{align*}
and conclude with the fact that $\forall k\in[n]$, $\Lambda_k$ is continuous.
\end{proof}

\subsection{Heavy-tailed random vector concentration}\label{sec:heavy_tailed_random_vector_concentration}

In \cite{CGGR-HeavyTailIso} the authors derive high-dimensional heavy-tailed concentration inequalities from so-called ``weak-Poincaré inequalities''. 
Given a random variable $Y\in \mathbb R$, they assume that for any locally Lipschitz $f: \mathbb R\to \mathbb R$:
\begin{align}\label{eq:weak_poincarre}
\mathbb V[f(Y)] \equiv \mathbb E \left[ \left\vert f(Y) - \mathbb E[f(Y)] \right\vert^2 \right] \leq \beta(s) \mathbb E \left[ \left\Vert \restrict{\nabla f}{Y} \right\Vert^2 \right] + s \osc(f)^2,
\end{align}
where $\beta:\mathbb R_+^* \to \mathbb R_+$ is nonincreasing and $\osc(f) = \sup f - \inf f$
(note that $\mathbb V[f(X)]\leq \frac{1}{4}\osc(f)^2$, thus the inequality is only non-trivial for $s\in (0,\frac{1}{4})$).
Assuming~\eqref{eq:weak_poincarre}, Theorem~8 in \cite{CGGR-HeavyTailIso} states that, for a random vector $X=(X_1,\ldots,X_n)\in\mathbb{R}^n$ consisting of $n$ independen copies of $Y$, for any $\lambda$-Lipschitz $f:\mathbb{R}^n\to\mathbb{R}$ and any median $m_f$ of $f(X)$:

\begin{align}\label{eq:weak_poincarre_dep_s}
\forall t\geq 0:\qquad \mathbb P \left( \left\vert f(X) -m_f \right\vert \geq t\right)
\leq 2s + \frac{\sqrt e}{2}\exp \left( \frac{-t}{4\lambda\sqrt{\beta(s)}} \right).
\end{align}
If one can express $\beta(s)$, it is then possible to optimize on $s\in (0, \frac{1}{4}]$. In \cite{CGGR-HeavyTailIso}, the author study two main examples that will allow us to illustrate our results:
\begin{itemize}
  \item the \textbf{$q$-subexponential} measure denoted $\nu_q$ and having the density $f_{\nu_q}: t \mapsto \frac{q}{2\,\Gamma(1/q)} e^{-|t|^q}$,
  \item the \textbf{$q$-Cauchy} measure denoted $\kappa_q$ and having the density $f_{\kappa_q}: t \mapsto \frac{q}{2}\,(1+|t|)^{-(q+1)}$.
\end{itemize}
The two propositions below follow from \eqref{eq:weak_poincarre_dep_s}
\begin{proposition}[$q$-subexponential concentration, \cite{CGGR-HeavyTailIso}]\label{pro:q_subsexponential_concentration_weak_poincare}
  Given $q\in(0,1)$ and $n$ i.i.d. random variables $X_1, \ldots, X_n\in \mathbb R$ having the density $f_{\nu_q}$, there exists a constant $C>0$ independent of $n$ such that for all $f: \mathbb R^n\to \mathbb R$, $1$-Lipschitz and $m_f\in \mathbb R$, median of $f(X)$:
  \begin{align*}
\forall t\geq 0:\qquad \mathbb P \left( \left\vert f(X) -m_f \right\vert \geq t\right)
\leq C \max \left( \exp \left( \frac{-ct}{(\log n)^{\frac{1}{q}-1}} \right), e^{-ct^q} \right).
\end{align*}
\end{proposition}
\begin{proposition}[$q$-Cauchy concentration, \cite{CGGR-HeavyTailIso}]\label{pro:q_Cauchy_concentration_weak_poincare}
  Given $q>0$ and $n$ i.i.d. random variables $X_1, \ldots, X_n\in \mathbb R$ having the density $f_{\kappa_q}$, there exist two constants $C,t_0>0$ independent of $n$ such that for all $f: \mathbb R^n\to \mathbb R$, $1$-Lipschitz and any $m_f\in \mathbb R$, median of $f(X)$:
  \begin{align*}
\forall t\geq t_0:\qquad \mathbb P \left( \left\vert f(X) -m_f \right\vert \geq t n^{\frac{1}{q}}\right)
\leq C \left( \frac{\log(t)}{t} \right)^q,
\end{align*}
\end{proposition}

Using our methodology, we reach the same -- if not slightly better -- results through techniques that are hopefully easier to implement.
The main idea is to combine Theorems~\ref{the:concentration_gaussian} and~\ref{the:concentration_talagrand} with, respectively Theorem~\ref{the:concentration_bounded_variations} and~\ref{the:concentration_bounded_variations_convex}, using either a vector with Gaussian entries, or a vector with independent bounded entries, as a pivot to reach more general concentration decay.

To simplify notation, let us introduce the operator $\mathcal E_1 ,\mathcal E_2 \in \mathcal M_{\mathbb P_+}$ defined, with $\mathcal E_1(0)=[1,+\infty)$, $\mathcal E_2(0)=[2,+\infty)$ and:
\begin{align*}
\begin{aligned}
\forall t > 0: \qquad \mathcal E_1(t) =  e^{-t}
&&\et&& \mathcal E_2(t) =  2e^{-t^2/2}.
\end{aligned}
\end{align*}
\begin{theorem}[Concentration from Gaussian transport]\label{the:concentration_general_exponential_concentration_based}
Let us consider a random vector $X=(X_1, \ldots, X_n) \in \mathbb R^{n}$ 
such that there exist i.i.d.\ random variables $Z_1,\ldots,Z_n\sim\mathcal{N}(0,1)$ and continuous, piecewise differentiable maps $\phi_1,\ldots,\phi_n:\mathbb{R}\to\mathbb{R}$ satisfying
\begin{align*}
\forall i\in [n]:\quad X_i = \phi_i(Z_i)\quad a.s.
\end{align*}
and let us introduce a nondecreasing mapping $h: \mathbb R_+ \to \mathbb R_+$ satisfying\footnote{If $\phi_1=\cdots=\phi_n$ are convex mappings, then one can simply choose, for all $t>0$: $h(t) = \sup_{i\in[n]}\partial \phi_i (t) $, where $\partial \phi_i$ is the subgradient presented in Definition~\ref{def:subgradient}.}
\begin{align*}
\forall t> 0: \qquad h(t) \geq \sup_{i\in [n]}\sup_{\genfrac{}{}{0pt}{2}{|u|,|v|\leq t}{u\neq v}} \frac{|\phi_i(u) - \phi_i(v)|}{|u-v|},
\end{align*}

For any $\theta\in (0,1)$, any $f: \mathbb R^n \to \mathbb R$, $1$-Lipschitz and any independent copy $X'$ of $X$, we have the concentration:
\begin{align}\label{eq:heavy_tailed_concentration}
S_{\left\vert  f(X) - f(X')\right\vert} \leq
3  \cdot \mathcal E_2 \circ \min\left(  \left(\frac{\sqrt 2 }{\theta}\id \cdot h \right) ^{-1} ,   \frac{\id/\sqrt 2}{h \left( \frac{\sqrt{2\log n}}{1-\theta} \right)} \right).
\end{align}
\end{theorem}
The proposition applies for instance to cases where each $\phi_i$ is $1$‑Lipschitz, one can then choose $h=1$, let $\theta$ tend to $1$ and recover Theorem~\ref{the:exponential_concentration}, up to a factor $2$.

\begin{remark}\label{rem:standard_deviation_heavy_tail}
When Theorem~\ref{the:concentration_general_exponential_concentration_based} is satisfied, given $r>0$, if $t\mapsto \mathcal E_2\circ (\id \cdot h/\theta)^{-1}(t^{1/r})$ is integrable then one can bound by Lemma~\ref{lem:moments_alpha_moments_X}:
\begin{align}\label{eq:heavy_tailed_moment}
\mathbb E[|f(X) - m_f|^ r] \underset{n\to \infty}{=} O \left( h \left(\frac{\sqrt{2\log (n)} }{1-\theta} \right)^r  \right),
\end{align}
\end{remark}

This proposition allows us to slightly improve the result of Proposition~\ref{pro:q_subsexponential_concentration_weak_poincare}. 
Essentially, we recover the same result with the $t$-decay being replaced from $e^{-\id}$ to $e^{-\id^2/2}$ and the $n$-decay from $\log^{\frac{1}{q}-1}(n)$ to $\log^{\frac{1}{q}- \frac{1}{2}}(n)$.
\begin{corollary}[$q$-subexponential concentration improved]\label{cor:q_subsexponential_concentration_weak_poincare_improved}
  Given $q\in(0,1)$ and $n$ i.i.d. random variables $X_1, \ldots, X_n\in \mathbb R$ having the density $f_{\nu_q}$, there exists a constant $C>0$ independent of $n$ such that for all $f: \mathbb R^n\to \mathbb R$, $1$-Lipschitz and $X'$, an independent copy of $X$:
  \begin{align*}
\forall t\geq 0:\qquad \mathbb P \left( \left\vert f(X) - f(X') \right\vert \geq t\right)
\leq C \mathcal E_1\circ \min \left( \frac{ct^2}{(\log n)^{\frac{2}{q}- 1}} , ct^q \right).
\end{align*}
\end{corollary}
\begin{proof}
We show in Proposition~\ref{pro:bound_of_transport_functions_subexponential} (in Appendix~\ref{app:derivative_bounds_for_monotone_gaussian_transport_to_subexponential_and_power_law_targets}) that in this case one can choose
\begin{align*}
h(t) = \max(h_0, C  t^{\frac{2}{q}-1}),
\end{align*}
for some constants $h_0,C>0$. One then obtains the existence of some constant $c>0$ such that for $n$ large enough:
\begin{align*}
\left( \frac{\id \cdot h}{\theta} \right)^{-1} = \min \left( \frac{\theta \id}{h_0}, \left( \frac{\theta \id}{C} \right)^{\frac{q}{2}} \right)\geq c \id ^{\frac{q}{2}}
&&\et&&
h \left( \frac{\sqrt{2\log n}}{1-\theta} \right)
\leq c\log^{\frac{1}{q}- \frac{1}{2}}(n),
\end{align*}
and apply Theorem~\ref{the:concentration_general_exponential_concentration_based}.
\end{proof}
\begin{remark}[Weak $q$-Cauchy concentration]\label{rem:q_Cauchy_still_not_reached}
From Proposition~\ref{pro:transport_cauchy} when studying a $q$-Cauchy distribution, one rather chooses:
\begin{align*}
h(t) = \max(h_0, C  t^{1+\frac{1}{q}}e^{t^2/2q}),
\end{align*}
for some constants $h_0,C>0$. One then obtains for $n$ large enough:
\begin{align*}
   h \left( \frac{\sqrt{2\log n}}{1-\theta} \right)
\geq C (\log n)^{\frac{q+1}{2q}} n^{\frac{1}{q(1-\theta)^2}}.
 \end{align*} 
We see that if $\theta>0$, $h \left( \frac{\sqrt{2\log n}}{1-\theta} \right) \gg n^{\frac{1}{q}}$, which means that the concentration inequality of Proposition~\eqref{the:concentration_general_exponential_concentration_based} provides a far slower $n$-decay than the one provided in Proposition~\ref{pro:q_Cauchy_concentration_weak_poincare}.
\end{remark}

We propose below an alternative approach to Theorem~\ref{the:concentration_general_exponential_concentration_based} in order to reach the good $n$-decay proportional to $n^{\frac{1}{q}}$ given by Proposition~\ref{pro:q_Cauchy_concentration_weak_poincare}.
\begin{theorem}[Concentration when transport variations are $\log$-subadditive]\label{the:concentration_heavy_heavy_tail_subadditive_assumption_h}
In the setting of Theorem~\ref{the:concentration_general_exponential_concentration_based}, if we assume this time that for all $i\in [n]$, $Z_i$ follows the Laplace density $t\mapsto e^{-|t|}/2$ and, in addition, that\footnote{That implies taking $u,v\in [h(0),+\infty)$: $h^{-1}(u v)\geq h^{-1}(u)+h^{-1}(v)$ taking $u = h(x)$ and $v=h(y)$.}:
\begin{align}\label{eq:assumption_h}
\forall x,y \geq 0:\qquad h(x)h(y)\geq h(x+y ).
\end{align}
then:
\begin{align*}
S_{\left\vert  f(X) - f(X')\right\vert} \leq
3   \cdot \mathcal E_1 \circ \left(\id \cdot h \right) ^{-1} \circ \left(\id/(8\sqrt 3 h \left(\log n \right))\right),
\end{align*}
for some independent copy $X'$ of $X$.
\end{theorem}
\begin{remark}\label{rem:symmetric_gaussian_laplace}
There exists a Gaussian symmetric setting for Theorem~\ref{the:concentration_heavy_heavy_tail_subadditive_assumption_h}, assuming this time, instead of \eqref{eq:assumption_h}:
\begin{align*}
\forall x,y \geq 0:\qquad h(\sqrt{x})h(\sqrt{y})\geq h(\sqrt{x+y} ).
\end{align*}
However, when applied on the $q$-Cauchy density and others, while the $t$-decay is quicker, the $n$-decay is slower. Typically, for the $q$-Cauchy distribution (see Remark~\ref{rem:q_Cauchy_still_not_reached}), one would obtain an $n$-decay of order $(\log n)^{\frac{1+q}{2q}} n^{\frac{1}{q}}$ instead of $n^{\frac{1}{q}}$. For this reason, we just provide the result originating from concentration of Laplace random vectors here.
\end{remark}
To reach the result of \cite{CGGR-HeavyTailIso} for $q$-Cauchy densities, let us first introduce a notation that will be useful several times.
Given $a\in \mathbb R$ and $b\geq 0$, we denote
\begin{align}\label{def:philambert}
\begin{aligned}[t]
\philambert_{a,b}: \quad &[1,\infty)&\longrightarrow&\hspace{0.4cm}\mathbb R_{+}&\\
&\hspace{0.3cm}t&\longmapsto& \ (\log t)^{a}t^{b}.&
\end{aligned}
\end{align} 
Then one can rely on the following result that is proven in Appendix~\ref{sec:side_results_of_section_ref}.
\begin{lemma}\label{lem:phi_ab_bound}
Given $a,b> 0$, for all $u \geq e^{b}$,
\begin{align*}
  \philambert_{a,b}^{-1}(u) \geq b^{a/b} \philambert_{- \frac{a}{b}  ,\frac{1}{b}}(u),
\end{align*}
with equality at $u=e^b$ and asymptotically as $u\to\infty$.
\end{lemma}
We now have all the elements to deduce state-of-the-art $q$-Cauchy concentration from Theorem~\ref{the:concentration_heavy_heavy_tail_subadditive_assumption_h}.
\begin{proof}[Alternative Proof of Proposition~\ref{pro:q_Cauchy_concentration_weak_poincare}]
 Following Proposition~\ref{pro:transport_cauchy} in Appendix~\ref{app:derivative_bounds_for_monotone_gaussian_transport_to_subexponential_and_power_law_targets}, when studying the concentration of $X=(X_1,\ldots, X_n)$ when each $X_i$ has the density  $t \mapsto \frac{q}{2} (1+|t|)^{-1-q}$, one is led to consider, for some constant $C\geq 1$ the mapping:
\begin{align*}
h:t\longmapsto C e^{\frac{t}{q}}.
\end{align*}
With this choice, \eqref{eq:assumption_h} is satisfied (since $C\geq 1$), one can besides bound $h \left(\log n \right)
\leq C n^{\frac{1}{q}}$ and:
\begin{align*}
\mathcal E_1 \circ \left(\id \cdot h \right) ^{-1}
&= \mathcal E_1 \circ  \left( C \id \cdot e^{\id/q} \right)^ {-1}
= \mathcal E_1 \circ \left( C\philambert_{1,\frac{1}{q}}  \circ e^{\id} \right)^ {-1}
= \frac{1}{\philambert_{1,\frac{1}{q}}^ {-1}  \circ (\id/C)}.
\end{align*}
Lemma~\ref{lem:phi_ab_bound} then yields:
\begin{align}\label{eq:lambert_func_bound_q_power}
\forall t\geq Ce^{\frac{1}{q}}:\qquad \mathcal E_1 \circ \left(\id \cdot h \right) ^{-1}(t)\leq \frac{q^q}{\philambert_{- q,q} (t/C)} = \left( \frac{qC}{t} \log(t) \right)^ q,
\end{align}

which naturally leads to the result of \cite{CGGR-HeavyTailIso} applying Theorem~\ref{the:concentration_heavy_heavy_tail_subadditive_assumption_h}.
\end{proof}

In the case where certain moments of $h(|Z|)$ are bounded, one can provide the ready-to-use result below.

\begin{theorem}[Concentration when transport's derivative has bounded moment]\label{the:Concentration_general_moment_onhZ_gaussian_concentration_based}
In the setting of Theorem~\ref{the:concentration_general_exponential_concentration_based}, if we assume that $M_q \equiv \mathbb E[h(|Z|)^q] < \infty$ for a certain $q>0$ then, for any $n\in \mathbb N$ and any $1$-Lipschitz mapping $f:\mathbb{R}^n\to\mathbb{R}$, one has the concentration inequality:
\begin{align}\label{eq:pro_conc_moments_h_Z}
\forall t\geq e^{\frac{1}{q}}:\qquad \mathbb P \left( \left\vert f(X) - f(X') \right\vert \geq  \frac{(M_q n)^ {\frac1q}t}{8\sqrt 3}\right) \leq 3\left( \frac{q\log(t)}{t} \right)^ q,
\end{align}
for some independent copy $X'$ of $X$.
\end{theorem}
\begin{remark}\label{rem:extension_to_general_psi}
Actually this proposition could be generalized to any case where $M_\psi \equiv \mathbb E[\psi(h(|Z|))]$ is bounded for some increasing mappings $\psi: \mathbb R_+ \to \mathbb R_+$ possibly different from $\id^ q$.
Refer to the proof of Theorem~\ref{the:Concentration_general_moment_onhZ_gaussian_concentration_based} 
for how such an extension would proceed.
Note in particular that the $n$-decay in Theorem~\ref{the:Concentration_general_moment_onhZ_gaussian_concentration_based} is proportional to $\psi^{-1}(M_\psi n)$ only because, for power mappings $\psi: t \to t^q$: $\frac{\psi(t)}{M_qn}\geq \psi(t/\psi^{-1}(M_\psi n))$. This is not true for general increasing mappings $\psi: \mathbb R_+ \to \mathbb R_+$.
\end{remark}

\begin{remark}\label{rem:th_not_H}
In the setting of Propositions~\ref{the:concentration_general_exponential_concentration_based} (resp. Theorem~\ref{the:Concentration_general_moment_onhZ_gaussian_concentration_based}), Lemma~\ref{lem:median_ii} allows us to replace the random variable $f(X' )$ by any of its medians at the cost of an extra factor~2 in the right-hand side of inequalities~\eqref{eq:heavy_tailed_concentration} and~\eqref{eq:heavy_tailed_moment} (resp. in inequality~\eqref{eq:pro_conc_moments_h_Z}). When the concentration function $\mathcal E_1 \circ \left( \id \cdot h \right)^{-{1}}$ is integrable (resp. when $q>1$), one can also replace $f(X' )$ with $\mathbb E[f(X)]$ with a slight modification of the constants as explained in Lemma~\ref{lem:concentration_expectation_random_variable_case}.
\end{remark}
Let us now provide the proofs of Theorem~\ref{the:concentration_general_exponential_concentration_based} and~\ref{the:Concentration_general_moment_onhZ_gaussian_concentration_based}.

\begin{lemma}\label{lem:extrem_laplace}
Given $n$ random variables $Z_1,\dots,Z_n$ satisfying $\forall i\in [n]:\  S_{|Z_i|}\leq \mathcal E_2$:
\begin{align*}
  S_{\max_{1 \le i \le n} |Z_i|}
\le \mathcal E_2 \boxplus \Incr_{\sqrt{2\log n}}.
\end{align*}
where the increment operator $\Incr_\delta$ was introduced, for all $\delta>0$ in \eqref{eq:increment_operator}.

If instead we assume that $\forall i\in [n]:\  S_{|Z_i|}\leq \mathcal E_1$, then:
\begin{align*}
  S_{\max_{1 \le i \le n} |Z_i|}
\le \mathcal E_1 \boxplus \Incr_{\log n}.
\end{align*}
\end{lemma}

\begin{proof}
By the union bound and the basic inequality $(a+b)^ 2\geq a^ 2+b^ 2$, valid for any $a,b>0$:
\begin{align*}
  \mathbb{P} \left(  \max_{1 \le i \le n} |Z_i| > \sqrt{2\log n} + t \right)
\le \sum_{i=1}^n \mathbb{P} \left( |Z_i| > \sqrt{2\log n} + t \right)\le 2n e^{-(\sqrt{2\log n} + t)^ 2/2} \leq \mathcal E_2(t),
\end{align*}
and one can conclude with Lemma~\ref{lem:incr_to_Incr}. The proof for the $\mathcal E_1$ decay is even simpler.
\end{proof}
\begin{proof}[Proof of Theorem~\ref{the:concentration_general_exponential_concentration_based}]
Let us introduce the notation:
\begin{align*}
\phi: \begin{aligned}[t]
\mathbb R^n&\longrightarrow&&\hspace{1.1cm}\mathbb R^n\\
z&\longmapsto&&(\phi_1(z_1), \ldots, \phi_n(z_n)),
\end{aligned}
\end{align*}
and start from the identity $X  = \phi(Z)$ (where we naturally defined $Z \equiv (Z_1, \ldots, Z_n)$).
We want to apply Theorem~\ref{the:concentration_bounded_variations} to the random vector $Z$ and the mapping $\Phi \equiv \phi$. Given an independent copy $Z'{}$ of $Z$, let us bound (recall that $h$ is nondecreasing):
\begin{align}\label{eq:control_of_variation_general_concentration}
\|\phi(Z)- \phi(Z'{})\|
&= \sqrt{\sum_{k=1}^n (\phi_k(Z_k)- \phi_k(Z_k'))^2}
\ \leq \max_{k\in [n]} \max( h(|Z_k|),  h(|Z_k'|)) \|Z - Z'{}\|.
\end{align}

Let us then employ Lemma~\ref{lem:extrem_laplace} (be careful that the composition distributes with the parallel sum on the left and not on the right, that is why we need here to bound the parallel sum with maximum thanks to Lemma~\ref{lem:parallel_sum_and_min} and then employ Proposition~\ref{pro:composition_with_minimum}):
\begin{align}\label{eq:method_concentration_sup}
\forall t\geq 0:\mathbb P \left( \max_{k\in [n]}  h(|Z_k|) \geq t \right)
&=\mathbb P \left( \max_{k\in [n]} | Z_k| \geq h^{-1}(t) \right)
\leq \left( \mathcal E_2\boxplus \Incr_{\sqrt{2\log n}}\right) \circ h^{-1}(t) \nonumber\\
&\leq  \max \left( \mathcal E_2 \circ (\theta \cdot \id), \Incr_{\sqrt{2\log n}}\circ ((1-\theta)\id) \right) \circ h^{-1} \nonumber \\
&\leq \max \left( \mathcal E_2 \circ (\theta \cdot h^{-1}), \Incr_{h \left(\frac{\sqrt{2\log n}}{1-\theta}\right)} \right),
\end{align}
for any parameter $\theta\in(0,1)$.

One can then combine Theorems~\ref{the:concentration_gaussian} and~\ref{the:concentration_bounded_variations} to finally get:
\begin{align*}
S_{\left\vert  f(X) - f(X')\right\vert}
&\leq 3 (\mathcal E_2\circ(\id/\sqrt{2})) \boxtimes \max \left( \mathcal E_2 \circ (\theta \cdot h^{-1}), \Incr_{h (\frac{\sqrt{2\log n}}{1-\theta})} \right)\\
&\leq 3 \cdot \max \left( \mathcal E_2 \circ  \left( \frac{\sqrt 2\id \cdot h}{\theta} \right) ^{-1}, \mathcal E_2\circ \frac{\id/\sqrt 2}{h (\frac{\sqrt{2\log n}}{1-\theta})} \right),
\end{align*}
thanks to Proposition~\ref{pro:composition_with_minimum} and Lemma~\ref{lem:composition_translation} and following the same identities as the one presented in Remark~\ref{rem:product_with_incr}.
\end{proof}
\begin{remark}\label{rem:extension_prop}
In the proof, the independence between the entries of $Z$ is never used, actually any sequence of random variables $(Z_i)_{i\in \mathbb N}\in \mathbb R^{\mathbb N}$ satisfying the results of Theorem~\ref{the:exponential_concentration} and Lemma~\ref{lem:extrem_laplace} will work.
\end{remark}

\begin{proof}[Proof of Theorem~\ref{the:concentration_heavy_heavy_tail_subadditive_assumption_h}]
Assumption~\eqref{eq:assumption_h} allows us to set:
\begin{align*}
h^{-1}(t)\geq h^{-1}(t/h(\log n))+h^{-1}(h(\log n)),
\end{align*}
and therefore:
$h^{-1}(t)-\log n\geq h^{-1}(t/h(\log n))$, which
allows us to improve \eqref{eq:method_concentration_sup} to get with Lemma~\ref{lem:extrem_laplace} (note indeed that $S_{|Z_k|}\leq \mathcal E_1$):
\begin{align*}
\forall t\geq 0:\mathbb P \left( \max_{k\in [n]}  h(|Z_k|) \geq t \right)
\leq  \mathcal E_1(h^{-1}(t) - \log n )
\leq \mathcal E_1 \circ h^{-1}\left( \frac{t }{h(\log n)} \right).
\end{align*}
Then the rest of the proof is similar to the proof of Theorem~\ref{the:concentration_general_exponential_concentration_based}, bounding, with a combination of Theorems~\ref{the:exponential_concentration} and~\ref{the:concentration_bounded_variations}:
\begin{align*}
S_{\left\vert  f(X) - f(X')\right\vert}
&\leq 3 (\mathcal E_1 \circ (c_{\nu_1}\id)) \boxtimes \left( \mathcal E_1 \circ h^{-1}\left( \frac{\id }{h(\log n)} \right)\right)\
&\leq 3 \cdot \mathcal E_1 \circ (\id \cdot h)^{-1}\circ \left( \frac{c_{\nu_1} \id }{h(\log n)} \right) ,
\end{align*}
thanks to Proposition~\ref{pro:distribution_of_composition}, Lemma~\ref{lem:composition_translation} and with the notation $c_{\nu_1} \equiv \frac{1}{8\sqrt 3}<1$ that was introduced in Theorem~\ref{the:exponential_concentration}.
\end{proof}

\begin{proof}[Proof of Theorem~\ref{the:Concentration_general_moment_onhZ_gaussian_concentration_based}]
The scheme of the proof is very similar to the proof of Theorem~\ref{the:concentration_general_exponential_concentration_based}, we will thus keep the same notations.
Relying on the control of variation of $\Phi(Z)$ given by \eqref{eq:control_of_variation_general_concentration} one can this time employ the following bound instead of \eqref{eq:method_concentration_sup}:
\begin{align*}
\mathbb P(\max_{k\in [n]}  h(|Z_k|)  \geq t)
&\leq n\mathbb P \left( h(|Z_1|) \geq t\right)
\leq \frac{n M_q}{t^ q},
\end{align*}
A combination of Theorems~\ref{the:exponential_concentration} and~\ref{the:concentration_bounded_variations} then yields, for any $1$-Lipschitz $f: \mathbb R^ n \to \mathbb R$, the concentration inequality:
\begin{align*}
S_{\left\vert f(\Phi(Z)) - f(\Phi(Z')) \right\vert}
\leq C\left(  \mathcal E_1 \boxtimes \id^{-q} \right) \circ \frac{c\id}{(M_q n)^{\frac{1}{q}}},
\end{align*}
for some constants $C,c>0$, thanks to Lemma~\ref{lem:composition_translation}.
Let us then express ($\id^{-q}$ is defined in \eqref{eq:power_operator}):
\begin{align*}
\left( \mathcal E_1 \boxtimes \id^{-q} \right)^{-1}(t)
=  \log(1/t) t^{- \frac{1}{q}} = \philambert_{1,\frac{1}{q}} \left( \frac{1}{t} \right),
\end{align*}
which implies thanks to Lemma~\ref{lem:phi_ab_bound} that for all for all $t>e^{1/q}$, as it was done before in~\eqref{eq:lambert_func_bound_q_power}:
\begin{align*}
\mathcal E_1 \boxtimes \id^{-q}(t)
=\frac{1}{\philambert_{1,\frac{1}{q}}^ {-1} \left( t \right)}
\leq \left( \frac{q\log(t)}{t} \right)^ q.
\end{align*}
\end{proof}

If we transport measure from a vector with independent bounded entries, we may invoke Talagrand’s concentration theorem (Theorem \ref{the:exponential_concentration}) to obtain the following result, which, to the best of our knowledge, is entirely new. The convexity and regularity assumptions required by Theorem \ref{the:exponential_concentration} make it difficult to formulate more general statements -- for instance, concerning the concentration of $\|BX\|$ for a deterministic matrix $B\in \mathcal M_n$ -- without incurring a significant loss in the rate of decay. For this reason, and for convenience, we simply restate below the result already presented in the introduction.
\begin{reptheorem}{0.2}[Heavy-tailed concentration of Euclidean norm]
Given $q>0$, there exist some constants $C,c>0$ such that for any $n\in \mathbb N$ and any random vector $X\in \mathbb R^n$ with independent entries:
\begin{align*}
\forall t\geq 0: \quad \mathbb P \left( \left\vert \|X\| - \|X'\| \right\vert > t \right)
\leq CnM'_q \left( \frac{ \log^{2}(1+ct )}{ct} \right)^q,
\end{align*}
where $X'$ is an independent copy of $X$ and $M'_q \equiv \sup_{i\in [n]} \mathbb E \left[ (e+|X_i|)^q\right]$.
\end{reptheorem}
The bound is particularly informative when $q>4$ or $q<1$.
For $q\in [1, 2]$, Proposition \ref{pro:concentration_heavy_tailed_concentration_entry_wise_lipschitz} below gives stronger (polynomial) control around expectation for all coordinate‑wise $1$‑Lipschitz functionals (including any $1$-Lipschitz for $\ell_1$ and $\ell_2$ norms). Its proof is quite elementary and therefore left in Appendix~\ref{sec:side_results_of_section_ref}.
For $2<q\leq 4$, a direct application of Fuk–Nagaev to $|X|^2$
yields a competing tail with a sub‑Gaussian part; see Remark \ref{rem:fuk_nagaev_to_bound_the_norm}.
\begin{proposition}[Concentration with Bahr–Esseen bound for \ensuremath{p\in[1,2]}]\label{pro:concentration_heavy_tailed_concentration_entry_wise_lipschitz}
Let $X=(X_1,\dots,X_n)$ have independent coordinates and $f:\mathbb{R}^n\to\mathbb{R}$ be coordinate-wise $1$-Lipschitz i.e.
\begin{align}\label{eq:comp_wise_1_lip}
\forall x\in \mathbb R^n, \forall i\in [n], \forall,h\in\mathbb{R}:\qquad  |f(x_1,\dots,x_i+h,\dots,x_n)-f(x_1,\dots,x_i,\dots,x_n)|\le |h|.
\end{align}
Then, for all $p\in [1,2]$, $t>0$,
\begin{align*}
  \mathbb{P}\big( \left\vert f(X)- \mathbb E[f(X)]\right\vert\ge t\big)\ \le\ \frac{2}{t^p}\sum_{i=1}^n\mathbb{E}[|X_i-X_i'|^p] ,
\end{align*}
where $X'= (X_1',\ldots, X_n')$ is an independent copy of $X = (X_1,\ldots, X_n)$. 
\end{proposition}

When $p > 2$, one would have to use the Rosenthal inequality instead of the Bahr–Esseen bound. That would bring a supplementary non-removable quadratic term $\frac{\sum_{i=1}^n\mathbb{E}[|X_i-X_i'|^2]}{t^2}$ making the new concentration inequality far weaker than the result of Theorem~\ref{the:concentration_l2}.

\begin{remark}[Square root of Fuk-Nagaev inequality for \ensuremath{p\in [2,4]}]\label{rem:fuk_nagaev_to_bound_the_norm}
The Fuk-Nagaev inequality provides concentration bounds for sums of independent centered random variables. For any $q>0$, there exists some constants $C,c>0$ such that for any independent centered $Y_1, \dots, Y_n\in \mathbb R$ with $\mathbb{E}[Y_i] = 0$:
\begin{align}\label{eq:fuk_nagaev_Y}
  \mathbb{P}\left( \left| \sum_{i=1}^n Y_i \right| > t \right) \leq C \frac{n M_q}{t^q} + C\exp\left( -\frac{c t^2}{n M_2} \right),
\end{align}
where $\forall r>0$: $M_r=\sup_{i\in [n]}\mathbb E[|Y_i|^r]$. When $q\in(0,2)$, Proposition~\ref{pro:concentration_heavy_tailed_concentration_entry_wise_lipschitz} gives us the same inequality without the exponential term.
To bound $\mathbb{P}( \|X\| - \|X'\| \geq t )$, where $X'$ is an independent copy of $X$, 
a naive idea is to view the norm as the square root of a sum of independent random variables and apply the Fuk--Nagaev inequality \eqref{eq:fuk_nagaev_Y} to $Y_i = X_i^2-X_i'{}^2$. When $q\geq 4$, that leads to:
\begin{align*}
\mathbb{P}( \|X\| - \|X'\| \geq t )
&\leq \mathbb{P}( |\|X\|^2 - \|X'\|^2\| \geq t^2 )
\leq \frac{n M_q}{t^q} + \exp\left( -\frac{c t^4}{n M_4} \right),
\end{align*}
where, here, $M_q =\mathbb E [| X_i^2-X_i'{}^2|^{\frac{q}{2}}]\leq C_q \mathbb E[|X_i|^q]$ for some constant $C_q>0$ independent of the distribution of $X_i$.
With this squared-norm route a concentration inequality having an ``effective'' $ n $-scale of order $ t \simeq n^{1/4} $ while it was $ t \simeq n^{1/q} $ in Theorem~\ref{the:concentration_l2}.
This shows that, for fixed $q>4$, our bound outperforms the $n^{1/4}$-type decay obtained from a naive application of the Fuk--Nagaev inequality.

\end{remark}

Let us now turn to the proof of Theorem~\ref{the:concentration_l2}. It could be seen as a naive yet powerful extension of Talagrand concentration result (Theorem~\ref{the:concentration_talagrand}) to heavy-tailed random vectors thanks to the convex mapping
\begin{align}\label{eq:def_phi}
\forall \theta>0:\qquad\hh_\theta: \begin{aligned}[t]
[0,1)&\longrightarrow&&\ \ \mathbb R_+\\
t\ \ &\longmapsto&&e^ {\frac{1}{(1-t)^\theta}}-e.
\end{aligned}
\end{align}
The map $\hh_\theta$ is convex and strictly increasing on $[0,1)$, so its inverse $\hh_\theta^{-1}$ transforms arbitrary nonnegative random variables into $[0,1)$‑valued ones, as required in Theorem~\ref{the:concentration_talagrand}
The precise choice of the mapping is not crucial, as long as $\hh_\theta$ is convex and ensures the convexity assumptions required in Talagrand's theorem.
Given a convex mapping $N: \mathbb R^n\to \mathbb R$, the application of this Theorem relies on the convexity and the following component-wise monotonicity satisfied by the norm:
\begin{align}\label{eq:component_wise_monotonicity}
\forall x,y\in \mathbb R_+^n, s.t. \ \forall i\in [n], \ (0\leq ) \ x_i\leq y_i:\ \ N(x)\leq N(y).
\end{align}
\begin{lemma}\label{lem:norm_h_convex}
Given an interval $A\subset \mathbb R$ and a convex nondecreasing mapping $f:A\to \mathbb R_+$, for any convex function $N: \mathbb R_+^n\to \mathbb R$ satisfying~\eqref{eq:component_wise_monotonicity}, the mapping $x \mapsto N\left( f(x_1),\ldots, f(x_n) \right)$ is convex on $A^n$.
\end{lemma}
\begin{proof}
Since $f$ is convex and nondecreasing, for any $x, y\in A^n$ and $t \in [0,1]$ we have component-wise inequality:
\begin{align*}
u\equiv f(t x_i + (1 - t) y_i) \leq t f(x_i) + (1 - t) f(y_i) \equiv v.
\end{align*}
Component-wise monotonicity of $N$ \eqref{eq:component_wise_monotonicity} then ensures $N(u) \leq N(v)$, and the convexity of $N$ provides $N(v) \leq t N(f(x)) + (1 - t) N(f(y))$.
\end{proof}
\begin{proof}[Proof of Theorem~\ref{the:concentration_l2}]
One can assume without loss of generality that the entries of $X$ only take positive values (if not, consider $(|X_1|,\ldots, |X_n|)$ instead of $X$). Let us introduce the random variables $Z_i \equiv \hh_\theta^{-1}(X_i)\in[0,1]$, where $\hh_\theta:[0,1]\to \mathbb R_+$ is the convex mapping defined in \eqref{eq:def_phi}. We know that $Z= (Z_1, \ldots, Z_n)$ satisfies Talagrand concentration inequality (Theorem~\ref{the:concentration_talagrand}). Besides, denoting $\HH_\theta : z\mapsto (\hh_\theta(z_1), \ldots, \hh_\theta(z_n))$, $X= \HH_\theta(Z)$ and we can then try to employ Theorem~\ref{the:concentration_bounded_variations_convex} to the convex mapping $z\mapsto |\HH_\theta(z)|$ and the random vector $Z$. Let us bound with the triangle inequality, for any $z,z'\in[0,1]^n$
\begin{align*}
\left\vert |\HH_\theta(z)| - |\HH_\theta(z')| \right\vert
&\leq \left\Vert \HH_\theta(z)-\HH_\theta(z') \right\Vert\\
&\leq \sqrt{\sum_{i=1}^ n \left\vert \hh_\theta(z_i) - \hh_\theta(z_i') \right\vert^2}
\leq \max(\Lambda(z), \Lambda(z')) \left\Vert z-z' \right\Vert,
\end{align*}
with $\Lambda(z) \equiv \sup_{i\in [n]} \hh_\theta'(z_i)$ (and since $\hh_\theta$ is convex). Let us then express the concentration of $\Lambda(Z)$. Noting that $\forall z>0$: $\hh_\theta'(z) = \frac{\theta}{(1-z)^{\theta+1}}e^{\frac{1}{(1-z)^{\theta}}} = \theta\log^{\frac{\theta+1}{\theta}}(\hh_\theta(z)+e)(\hh_\theta(z)+e)$ start with:
\begin{align*}
\mathbb P(\Lambda(Z)> t)
&\leq \mathbb P \left( \sup_{i\in[n]}\theta\log^{\frac{\theta+1}{\theta}}(\hh_\theta(Z_i)+e)(\hh_\theta(Z_i) +e)> t\right)\\
&\leq  \mathbb P \left( \sup X_i+e> \philambert_{\frac{\theta+1}{\theta},1}^{-1} \left( \frac{t}{\theta} \right) \right),
\end{align*}
with the notation $\philambert_{a,b}$ for $a,b\in \mathbb R$ defined in \eqref{def:philambert}.
Then, Markov inequality provides:
\begin{align}\label{eq:naive_bound_sup_Z}
\mathbb P(\Lambda(Z)> t)
\leq \frac{n M_q'}{\left( \philambert_{\frac{\theta+1}{\theta},1}^{-1} \left( \frac{t}{\theta} \right) \right)^{q}}
\equiv \alpha_\theta(t).
\end{align}
Then, transferring the concentration of $Z$ given by Theorem~\ref{the:concentration_talagrand} to $X$ thanks to Theorem~\ref{the:concentration_bounded_variations_convex}, one gets:
\begin{align*}
\mathbb P \left( \left\vert \|X\| - \|X'\| \right\vert > t \right)
\leq C\mathcal E_2 \boxtimes \alpha_\theta (ct),
\end{align*}
for some numerical constants $C,c>0$.
Given $u\in (0,2]$, one can express and bound for any $\delta>0$:
\begin{align}\label{eq:intermediate_step_th_conv}
\mathcal E_2^{-1}(u) \cdot \alpha_{\theta}^{-1}(u)
&= \theta \sqrt{2\log \left( \frac{2}{u} \right)} \cdot \philambert_{\frac{\theta+1}{\theta},1}\left(\left( \frac{nM_q'}{u} \right)^{\frac{1}{q}}\right)\nonumber\\
&\leq \frac{\sqrt{2}\theta}{q^{\frac{\theta+1}{\theta}}} \philambert_{\frac{\theta+1}{\theta}+\frac{1}{2},\frac{1}{q}}\left( \frac{nM_q'}{u} \right)
\leq \philambert_{2,\frac{1}{q}}\left( \frac{nM_q'}{u} \right),
\end{align}
where the two last inequalities relies on the fact that, following Proposition~\ref{pro:concentration_heavy_tailed_concentration_entry_wise_lipschitz} and Remark~\ref{rem:fuk_nagaev_to_bound_the_norm}, we assumed $q\geq 4$, then $nM_q'\geq n e^ q\geq 2$ and chose $\theta = 2$ that ensures $\sqrt{2}\theta /q^{\frac{\theta+1}{\theta}}\leq 1$.
That finally yields to for all $t\geq e^{\frac{1}{q}}/c$:
\begin{align*}
\mathbb P \left( \left\vert \|X\| - \|X'\| \right\vert > t \right)
\leq \frac{C'nM_q'}{\philambert_{2,\frac{1}{q}}^{-1} \left(ct \right)} \
\leq \frac{ C'nM_q'q^{2q}}{\philambert_{-2q,q} \left( ct\right)} \
\leq C' nM_q' \left( \frac{ q^2\log^{2}\left( ct\right)}{ct} \right)^q,
\end{align*}
for some numerical constants $C'>0$, applying Lemma~\ref{lem:phi_ab_bound} with $a=2, b=\frac{1}{q}$. Playing on the choice of $\theta$, one can improve the power on the log to $\frac{3}{2}$ but that will worsen the constants.

\end{proof}

\begin{remark}\label{rem:weak_fuk_nagaev}
This last result easily yields a weak Fuk-Nagaev concentration inequality (on the concentration of $X_1+\cdots + X_n$ when $M_q'$ is bounded for $q>2$ see \cite{fuk1973certain,nagaev1979large,rio2017fuknag}), combining again the Talagrand concentration inequality Theorem~\ref{the:concentration_talagrand} with Theorem~\ref{the:concentration_bounded_variations_convex} and the inequality (with the above notations):
\begin{align*}
\left\vert \sum_{i=1}^n \hh_\theta(z_i) - \hh_\theta(z_i') \right\vert
\leq \left( |\HH_\theta' (z)| + |\HH_\theta' (z')| \right) |z-z'|,
\end{align*}
Rigorously, the result of Theorem~\ref{the:concentration_l2} could let appear moments of $\phi_\theta'(Z_i)$ and not moments of $X_i$. So here one needs to adapt the proof of Theorem~\ref{the:concentration_l2} to obtain a result that allows to get a final Fuk-Nagaev-like result with moments of $X_k$ of the form:
\begin{align*}
\mathbb P \left( \left\vert \sum_{i=1}^n X_i - \mathbb E[X_i] \right\vert \geq t \right)
\leq Cn M_q\left(  \frac{\log^a(t)}{ct} \right)^q + C \mathcal E_2 \left( \frac{ct}{\sqrt{n M_2}}  \right),
\end{align*}
for certain constants $C,c,a>0$.
Since it is not improving the existing Fuk-Nagaev concentration inequality, we leave its proof as exercise for the reader.
\end{remark}

\subsection{Multilevel concentration}\label{sub:multilevel_concentration}\label{sec:multilevel_concentration}

Following Remark~\ref{rem:product_with_incr}, which states that, given a positive probabilistic operator $\alpha\in\mathcal{M}_{\mathbb{P}_+}$ with $\alpha\le 1$ and two parameters $\sigma_0,\sigma_1>0$,
\begin{align*}
\alpha \boxtimes  \left( \Incr^{\mathbb R_+}_{\sigma_0} \boxplus \alpha \circ \frac{\id}{\sigma_1}  \right)
=\alpha \circ \frac{\id}{\sigma_0} \boxplus \alpha \circ \left(\frac{\id}{\sigma_1}\right)^{\frac{1}{2}},
\end{align*}
(see \eqref{eq:power_operator} for the definition of $(\id/\sigma_1)^{1/2}$) one can push the mechanism further and get:
\begin{align}\label{eq:alpha_basis_concenntration_functions}
\alpha \boxtimes \left( \Incr^{\mathbb R_+}_{\sigma_0} \boxplus \left( \alpha  \boxtimes \left( \Incr^{\mathbb R_+}_{\sigma_1} \boxplus \alpha \circ \frac{\id}{\sigma_2}  \right) \right) \right)
=\alpha \circ \frac{\id}{\sigma_0} \boxplus \alpha \circ \left( \frac{\id}{\sigma_1} \right)^{\frac{1}{2}}\boxplus \alpha \circ \left( \frac{\id}{\sigma_2} \right)^{\frac{1}{3}}.
\end{align}

Basically, we get again some variation of a right composition of $\alpha$. This last concentration function can be involved in new concentration inequalities, that is why we will now describe the mechanism systematically. For that purpose, let us introduce some new (slightly  abusive\footnote{The natural convention, would rather be, for $\sigma>0$, $\left( \frac{\id}{\sigma} \right)^{\frac10} \equiv \restrict{N_{(-\infty, \sigma]}}{\mathbb R_+}$, where $N_{(-\infty, \sigma]}\in \mathcal M_\uparrow$ is defined as $\forall x<\sigma$, $N_{(-\infty, \sigma]}(x)=0$ and $N_{(-\infty, \sigma]}(\sigma) = \mathbb R_+$. However, in that case, $\alpha \circ \left( \frac{\id}{\sigma} \right)^{\frac10}$ would not be maximally monotone and, in particular, different from $\Incr_{\sigma}^{\mathbb R_+}$.}) notation conventions for all $a, \sigma\geq0$ such that $a=0$ or $\sigma=0$:
\begin{align}\label{eq:convention_power_0}
\alpha \circ \left( \frac{\id}{\sigma} \right)^{\frac1a}=
\left\{
\begin{aligned}
\Incr^{\mathbb R_+}_{\sigma}\quad &\text{if $a = 0$}\\
\Incr^{\mathbb R_+}_{0} \quad &\text{if $\sigma = 0$}.
\end{aligned}
\right.
\end{align}
With these notations at hand, one sees that
the concentration function appearing in \eqref{eq:alpha_basis_concenntration_functions} express in a general way $\op_{a\in A} \alpha \circ (\frac{\id}{\sigma_a})^{1/a}$ for finite sets $A \subset \mathbb R_+$ and $(\sigma_a)_{a\in A}\in \mathbb R_+^ A$. It is then possible to identify some simple calculation rules that are provided in the next Lemma. The proof is a simple consequence of the distributive property of the parallel product provided by Proposition~\ref{pro:distributivity_parallel_sum_product}.

\begin{lemma}\label{lem:sup_sup_inf_inf_relation}
\sloppypar{Given a positive probabilistic operator $\alpha \in \mathcal M_{\mathbb P_+}$, $n$ finite subsets $A^{(1)}, \ldots, A^{(n)} \subset \mathbb R_+$, and $n$ families of parameters $\sigma^{(1)} \in \mathbb R_+^ {A^{(1)}}, \ldots, \sigma^{(n)} \in \mathbb R_+^ {A^{(n)}}$ one has the identity:}
\begin{align*}
\optimes_{i\in[n]}\,\op_{a_i\in A^ {(i)}}  \alpha\circ \left( \frac{\id }{\sigma^ {(i)}_{a_i}} \right)^{\frac{1}{a_i}}
=  \op_{a_1\in A^ {(1)},\ldots, a_n\in A^ {(n)}} \alpha\circ\left( \frac{\id}{\sigma^ {(1)}_{a_1} \cdots \sigma^ {(n)}_{a_n}} \right)^{\frac{1}{a_1+\cdots + a_n} }.
\end{align*}
\end{lemma}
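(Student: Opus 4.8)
The plan is to dualise the whole identity through the inverse operation. Write $f^{(i)}\equiv\op_{a_i\in A^{(i)}}\big(\tfrac{\id}{\sigma^{(i)}_{a_i}}\big)^{1/a_i}$, so the claim is $f^{(1)}\boxtimes\cdots\boxtimes f^{(n)}=\op_{(a_1,\dots,a_n)}\big(\tfrac{\id}{\sigma^{(1)}_{a_1}\cdots\sigma^{(n)}_{a_n}}\big)^{1/(a_1+\cdots+a_n)}$. Since the inverse is an involution (Lemma~\ref{lem:inverse_involution}) that exchanges $\boxplus$ with the ordinary operator sum and $\boxtimes$ with the ordinary operator product (Definition~\ref{def:closed_parallel_sum}), and since both operations are associative and commutative (Lemma~\ref{lem:associativity_parallel_sum_product}), applying $(\cdot)^{-1}$ to both sides reduces the lemma to the ordinary-algebra identity
\begin{align*}
  \prod_{i=1}^n\Big(\sum_{a_i\in A^{(i)}}\big[\big(\tfrac{\id}{\sigma^{(i)}_{a_i}}\big)^{1/a_i}\big]^{-1}\Big)
  \ =\ \sum_{(a_1,\dots,a_n)}\big[\big(\tfrac{\id}{\sigma^{(1)}_{a_1}\cdots\sigma^{(n)}_{a_n}}\big)^{1/(a_1+\cdots+a_n)}\big]^{-1},
\end{align*}
where the sums and products are the (associative) iterated operator sum and product.

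Next I would identify the building-block inverses. For $a>0,\sigma>0$ the operator $\big(\tfrac{\id}{\sigma}\big)^{1/a}$ is the genuine increasing bijection $t\mapsto(t/\sigma)^{1/a}$ of $\mathbb R_+$, so $\big[\big(\tfrac{\id}{\sigma}\big)^{1/a}\big]^{-1}=\sigma\,\id^{a}$; the degenerate cases are covered by the conventions $\big(\tfrac{\id}{\sigma}\big)^{1/0}=\incr_\sigma$ and $\big(\tfrac{\id}{0}\big)^{1/a}=\incr_0$ together with $\incr_\delta^{-1}=\delta_+$ (equation~\eqref{eq:inv_incr}) and Lemma~\ref{lem:inv_f_+}, which make $[\incr_\sigma]^{-1}=\sigma_+$ play the role of the ``power'' $\sigma\,\id^{0}$ and $[\incr_0]^{-1}=0_+$ that of $0\,\id^{a}$. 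For a fixed tuple $(a_1,\dots,a_n)$ the product of building blocks then telescopes via $\id^{a}\!\cdot\id^{b}=\id^{a+b}$ to $\big(\prod_i\sigma^{(i)}_{a_i}\big)\,\id^{\,a_1+\cdots+a_n}$, which is exactly the inverse of the corresponding right-hand factor; and expanding the product of the $n$ sums on the left by the ordinary distributive law matches these term by term. Re-inverting via Lemma~\ref{lem:inverse_involution} then yields the stated identity.

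The only non-formal point — the one I would be most careful about — is the endpoint $t=0$: there $\sigma_+$ and $0_+$ are genuinely set-valued, so the pointwise distributive computation is literally valid only on $\mathbb R_+^{*}$. I would close this gap by observing that every operator occurring is a finite sum or product of maximally increasing operators, hence maximally monotone by Propositions~\ref{pro:sum_maximally_monotone} and~\ref{pro:sum_parallel_product_maximal} (whose hypotheses are immediate here, since all the domains contain $\mathbb R_+^{*}$ and all the ranges lie in $\mathbb R_+$), and a maximally monotone operator of $\mathbb R$ is determined by its restriction to a dense subset of its (interval, by Proposition~\ref{pro:ran_dom_convex}) domain, so agreement of the two sides on $\mathbb R_+^{*}$ forces agreement everywhere. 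When all $A^{(i)}\subset\mathbb R_+^{*}$ and all $\sigma^{(i)}_{a_i}>0$ there are no degeneracies at all, and the whole argument collapses to the distributivity of $\boxtimes$ over $\boxplus$ already recorded in Lemma~\ref{lem:associativity_parallel_sum_product}, exactly as the statement advertises.
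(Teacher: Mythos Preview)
Your proof is correct and follows essentially the same route as the paper, which simply records that the identity is ``a simple consequence of the distributive property of the parallel product provided by Lemma~\ref{lem:associativity_parallel_sum_product}''. Your dualisation through the inverse is exactly what underlies that distributivity, and you additionally spell out the term-by-term computation $\big(\tfrac{\id}{\sigma}\big)^{1/a}\boxtimes\big(\tfrac{\id}{\tau}\big)^{1/b}=\big(\tfrac{\id}{\sigma\tau}\big)^{1/(a+b)}$ and treat the degenerate cases $a=0$ and $\sigma=0$ more carefully than the paper does.
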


The expression of the result of this lemma contains a left composition with a probabilistic operator $\alpha$ to allow ourselves to rely on the convention~\eqref{eq:convention_power_0}. If $A^{(1)}, \ldots, A^{(n)} \subset \mathbb R_+^*$, and $\sigma^{(1)} \in \mathbb R_{+,*}^ {A^{(1)}}, \ldots, \sigma^{(n)} \in \mathbb R_{+,*}^ {A^{(n)}}$, this $\alpha$ is no longer needed since no incremental operator would appear.

\begin{remark}\label{rem:inf_convjugate}
One can set similarly thanks to Proposition~\ref{pro:inverse_min_max}:
\begin{align*}
\optimes_{i\in[n]}\min_{a_i\in A^ {(i)}} \alpha\circ \left( \frac{\id }{\sigma^ {(i)}_{a_i}} \right)^{\frac{1}{a_i}}
=  \min_{a_1\in A^ {(1)},\ldots, a_n\in A^ {(n)}} \alpha\circ\left( \frac{\id}{\sigma^ {(1)}_{a_1} \cdots \sigma^ {(n)}_{a_n}} \right)^{\frac{1}{a_1+\cdots + a_n} }.
\end{align*}

Given $A\subset \mathbb R_+$ and $(\sigma_a)_{a\in A} \in \mathbb R_+^A$:
\begin{align*}
{\inf_{a\in A}  \left( \frac{\id }{\sigma_{a}} \right)^{\frac{1}{a}}}
=\restrict{\left( \exp \circ \left( \inf_{a\in A} \frac{\id -\log(\sigma_{a})}{a}  \right)\circ \log \right)}{\mathbb R_+},
\end{align*}
and $(\inf_{a\in A} \frac{\id -\log(\sigma_{a})}{a})^{-1} = \sup_{a\in A} a\id + \log(\sigma_a)$, we recognize here the inverse of the convex conjugate of $(-\log \sigma_a)_{a\in A}$. This remark leads to some interesting, yet more laborious, proofs of Theorem~\ref{the:concentration_n_multi_lipschitz_with_conjugate_function}.
\end{remark}

\begin{theorem}[General multilevel concentration]\label{the:concentration_n_multi_lipschitz_with_conjugate_function}
Let us consider a metric space $(E, d)$, a random variable $Z \in E$, $n$ measurable mappings $\Lambda_1,\ldots, \Lambda_n \in \mathbb R_+^E$ such that there exist $\alpha \in \mathcal M_{\mathbb P_+}$, $n$ finite indices sets containing $0$, $A^{(1)}, \ldots, A^{(n)} \subset \mathbb R_+$, and $n$ families of positive parameters $\sigma^{(1)} \in \mathbb R_+^{A^{(1)}}, \ldots, \sigma^{(n)} \in \mathbb R_+^{A^{(n)}}$ such that for all $f: E\mapsto \mathbb R$, $1$-Lipschitz and for any median of $f(Z)$, $m_f$:
\begin{align}\label{eq:hypo_conc_multilevell}
S_{\left\vert f(Z) - m_f \right\vert}\leq \alpha,
\end{align}
and (with the convention that $\op_{a\in \emptyset}f_a = \Incr_0^{\mathbb R_+}$ to deal with constant $\Lambda_k(Z)$):
\begin{align*}
\forall k \in [n]: \quad
S_{\left\vert \Lambda_k(Z) - \sigma^{(k)}_0 \right\vert}\leq   \op_{a\in A^{(k)}\setminus\{0\}} \alpha \circ \left( \frac{\id}{\sigma^{(k)}_a} \right)^{\frac1{a}}.
\end{align*}
Given another metric space $(E', d')$, and a measurable mapping $\Phi: E \to E'$, if we assume that for any $z,z'\in E$:
\begin{align}\label{bounding_hypothesis_lambda_1_lambdan_2}
d'(\Phi(z), \Phi(z')) \leq \max(\Lambda_1(z),\Lambda_1(z') )\cdots \max(\Lambda_n(z),\Lambda_n(z') ) \cdot d(z, z'),
\end{align}
then for any $g: E'\to \mathbb R$, $1$-Lipschitz and any independent copy $Z'$ of $Z$:
\begin{align*}
S_{\left\vert g(\Phi(Z)) - g(\Phi(Z')) \right\vert}\leq (2n+1)  \op_{a_k\in A^{(k)}, k\in [n]} \alpha \circ\left( \frac{\id}{\sigma^{(1)}_{a_1} \cdots \sigma^{(n)}_{a_n}} \right)^{\frac{1}{1+a_1+\cdots + a_n }}.
\end{align*}
\end{theorem}
Adapting the constants, a similar result is also true in a convex concentration around independent copy setting ($E$ Euclidean vector space, $E' = \mathbb R$ and \eqref{eq:hypo_conc_multilevell} true for any $f$ $1$-Lipschitz \textit{and convex}).

Although practical instances of this setting may be uncommon, it partly explains the frequent appearance of multilevel concentration (in particular in \cite{gotze2021concentration} whose setting is quite far from the literature around our Theorem~\ref{the:concentration_differentiable_mapping}).

Let us first give some remarks on this theorem before providing its proof.
\begin{itemize}
\item If $\sigma_{0}^{(1)} = 0$, then, by convention, denoting $a= 1+a_2+\cdots +a_n$ one has:
\begin{align*}
\alpha \circ \left(  \left( \frac{\id}{0\cdot \sigma_{a_2}^{(2)} \cdots \sigma_{a_n}^{(n)}} \right)^{\frac{1}{1+a_2+\cdots +a_n}} \right) = \alpha \circ\left(  \left( \frac{\id}{0} \right)^{\frac{1}{a}} \right) = \Incr_0^{\mathbb R_+},
\end{align*}
thus we see that the contribution of $\sigma_0^ {(k)}$ will be nonexistent in the computation of the parallel sum.
\item If there exists $k \in [n]$ such that $A^ {(k)} = \{0\}$, then it means that $\Lambda^ {(k)}$ is a constant equal to $\sigma_0^ {(k)}$, and it is indeed treated as such in the final formula.
\end{itemize}
\begin{proof}[Proof of Theorem~\ref{the:concentration_n_multi_lipschitz_with_conjugate_function}]
For all $k\in[n]$, let us introduce the notation
\begin{align*}
\beta^{(k)} \equiv   \op_{a\in A^{(k)}}\alpha \circ \left( \frac{\id}{\sigma^{(k)}_a} \right)^{\frac1{a}}.
\end{align*}
First Lemma~\ref{lem:variable_centre_0} allows to set $S_{\left\vert \Lambda_k(Z) \right\vert}\leq \beta^{(k)}$. 
Second Theorem~\ref{the:concentration_bounded_variations} provides the concentration:
\begin{align*}
S_{\left\vert g(\Phi(Z)) - g(\Phi(Z')) \right\vert}
\leq (2n+1) \left( \alpha \circ \left( \frac{\id}{1} \right)^{\frac{1}{1}} \right) \boxtimes  \beta^{(1)}\ \boxtimes \ \cdots \ \boxtimes \ \beta^{(n)}.
\end{align*}
One can then conclude with Proposition~\ref{pro:distribution_of_composition} combined with Lemma~\ref{lem:sup_sup_inf_inf_relation}.
\end{proof}
The next result of multilevel concentration relies on the Taylor approximation of $d$-differentiable mappings and on the notion of modulus of continuity. To stay coherent with our framework, we introduce this definition for operators.
\begin{definition}[Modulus of continuity]\label{def:modulus_of_continuity}
A modulus of continuity $\omega : \mathbb R \to 2^{\mathbb R}$ is a maximally nondecreasing operator satisfying $\omega(0) = \{0\}$ and $\ran(\omega) = \mathbb R_+$.
Given two metric spaces $(E, d)$, $(E', d' )$, a mapping $f: (E, d) \to (E', d' )$ is said to be $\omega$-continuous iff
\begin{align*}
\forall x,y \in E:\quad d'(f(x), f(y)) \leq \omega (d (x, y)).
\end{align*}
\end{definition}

One then has the following characterization of the concentration of measure phenomenon with modulus of continuity already provided in \cite{ledoux2005concentration}. It is a particular case of Lemma~\ref{lem:lipschitz_on_subset_median} provided just below.
\begin{proposition}[\cite{ledoux2005concentration}, Proposition 1.3.]\label{pro:concentration_modulus_continuity}
Given a random vector $X \in E$ if, for any $f:E\to \mathbb R$, $1$-Lipschitz and for any median $m_f$ of $f(X)$,
$S_{\left\vert f(X) - m_f \right\vert} \leq \alpha$,
then for any $g:E\to \mathbb R$, $\omega$-continuous, and any median $m_g$ of $g(X)$, one has:
\begin{align*}
S_{\left\vert g(X) - m_g \right\vert} \leq \alpha \circ \omega^{-1}
\end{align*}
(the converse is obvious).
\end{proposition}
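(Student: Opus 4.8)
The plan is to squeeze the $\omega$-continuous observation $g$ between the median level $m \equiv m_g$ from both sides by a \emph{single} $1$-Lipschitz observation, so that the hypothesis is invoked only once and no extra constant is created. Put $A \equiv \{y \in E : g(y) \le m\}$ and $B \equiv \{y \in E : g(y) \ge m\}$. Since $m$ is a median of $g(X)$ one has $\mathbb P(X \in A) \ge \tfrac12$ and $\mathbb P(X \in B) \ge \tfrac12$, and trivially $A \cup B = E$.

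The candidate function is the signed distance $f \equiv d(\,\cdot\,, A) - d(\,\cdot\,, B)$. I would establish three properties. (i) \emph{$0$ is a median of $f(X)$}: if $g(x) \ge m$ then $x \in B$, so $d(x,B) = 0 \le d(x,A)$ and $f(x) \ge 0$; hence $\{g(X) \ge m\} \subseteq \{f(X) \ge 0\}$ and symmetrically $\{g(X) \le m\} \subseteq \{f(X) \le 0\}$, so both half-lines carry probability at least $\tfrac12$. (ii) \emph{$|g(x) - m| \le \omega(|f(x)|)$ for every $x$}: say $g(x) > m$ (the case $g(x) < m$ is symmetric and $g(x) = m$ is void); then $x \in B$, so $f(x) = d(x,A) \ge 0$, and for every $y \in A$ the $\omega$-continuity of $g$ gives $g(x) - m \le g(x) - g(y) \le \omega(d(x,y))$, whence, taking the infimum over $y \in A$ and using that $\omega$ is increasing together with Lemma~\ref{lem:order_fm1f_id}, $d(x,A) \succeq \omega^{-1}(g(x)-m)$ and therefore $\omega(|f(x)|) = \omega(d(x,A)) \succeq g(x) - m$. (iii) \emph{$f$ is $1$-Lipschitz} — the delicate point, discussed below.

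Granting (i)–(iii), for every $t \ge 0$ property (ii) gives the inclusion $\{|g(X) - m| > t\} \subseteq \{|f(X)| > \omega^{-1}(t)\}$ (because $\omega$, hence $\omega^{-1}$, is increasing one has $\omega(|f(X)|) > t \Rightarrow |f(X)| > \sup\omega^{-1}(t)$, and $\sup\omega^{-1}(t)$ is exactly where the set $\alpha\circ\omega^{-1}(t)$ attains its infimum since $\alpha$ is decreasing), and applying the hypothesis to the $1$-Lipschitz map $f$ with its median $0$,
\[
  \mathbb P\big(|g(X) - m| > t\big) \;\le\; \mathbb P\big(|f(X)| > \omega^{-1}(t)\big) \;\le\; \alpha\circ\omega^{-1}(t),
\]
which is the claim; the announced converse is immediate, a $1$-Lipschitz map being $\omega$-continuous for $\omega = \id$, for which $\omega^{-1} = \id$.

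The main obstacle is (iii): $d(\,\cdot\,,A)$ and $d(\,\cdot\,,B)$ are separately $1$-Lipschitz but their difference is a priori only $2$-Lipschitz, and the gain to $1$-Lipschitz uses that $A$ and $B$ cover $E$ and overlap on $\{g = m\}$ — in a length space such as $\mathbb R^n$ one joins a point with $g<m$ to one with $g>m$ by a path crossing $\{g=m\}$, giving $d(x,B)+d(y,A) \le d(x,y)$ for such a pair, while in an abstract metric space this is exactly the content that must be supplied by the auxiliary Lemma~\ref{lem:lipschitz_on_subset_median}. The only other thing to watch is the usual operator bookkeeping when $\omega$, $\omega^{-1}$ or $\alpha$ are multivalued (e.g. in (ii) and in the $\sup\omega^{-1}(t)$ step above); as in the proof of Proposition~\ref{pro:concentration_sum_prod} this is handled through the density of the points of single-valuedness (Theorem~\ref{the:dense_subset_function}) and the one-sided continuity of $t \mapsto \mathbb P(|g(X) - m| > t)$.
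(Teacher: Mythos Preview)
Your approach is essentially the paper's own: the paper does not give a separate proof of this proposition but declares it a special case of Lemma~\ref{lem:lipschitz_on_subset_median} with $A=E$, and the proof of that lemma proceeds exactly through the signed distance $\tilde g(x)=d(x,S)-d(x,S^c)$ with $S=\{g\le m_g\}$, showing that $0$ is a median of $\tilde g(X)$, that $\tilde g$ is $1$-Lipschitz, and that deviations of $g$ from $m_g$ force deviations of $\tilde g$ from $0$. Your sets $A,B$ differ from $S,S^c$ only on $\{g=m\}$, and you correctly single out the $1$-Lipschitz step as the delicate one and point back to Lemma~\ref{lem:lipschitz_on_subset_median} for it---which is precisely where the paper handles it.
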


It does not seem easy -- if possible -- to find analogues of Lemmas~\ref{lem:extension_Lipschitz} and~\ref{lem:convex_Lipschitz_extension} to extend an $\omega$-continuous mapping $\restrict{f}{A}$ when $\omega$ is not concave\footnote{It is a well known fact that modulus of continuity on convex bodies can be assumed to be concave or sub-additive but the question is then to show that our restriction space $A$ is convex which is generally not the case.} as it will be the case in the proof of the next Theorem.
Hopefully, this difficulty can be easily overcome since the condition that will be met to rely on the Taylor approximation is a localized notion of $\omega$-continuity that we define below.
\begin{definition}[Rooted $\omega$-continuity]\label{def:local_omega_continuity}
Given two metric space $(E,d)$ and $(E',d')$, a modulus of continuity $\omega: \mathbb R \to 2^{\mathbb R}$, and $A\subset E$, we say that a mapping $f: E\to E'$ is $\omega$-continuous from $A$ if for all $x\in A$, for all $y\in E$:
\begin{align*}
d'(f(x), f(y))\leq \omega(d(x,y)).
\end{align*}
\end{definition}
One can then inspire from the beginning of Section 1.3 in \cite{ledoux2005concentration} to get the following lemma.
\begin{lemma}\label{lem:lipschitz_on_subset_median}
Let us consider a length metric space $(E, d)$, a random variable $X \in  E$, and a nonincreasing mapping $\alpha \in \mathcal M_{\mathbb P_+}$ such that for any $1$-Lipschitz mapping $f: E\to \mathbb R$:
\begin{align}\label{eq:led_modulus_hyopo_conc}
S_{\left\vert f(X) - m_f \right\vert } \leq \alpha,
\end{align}
for $m_f\in \mathbb R$, a median of $f(X)$, then
for any subsets $A \subset E$, any modulus of continuity
$\omega$ such that $\alpha\circ \omega^{-1}$ is maximally monotone, and any measurable mapping $g: E \to \mathbb R$, $\omega$-continuous from $A$:
\begin{align}\label{eq:concentration_reduite}
\forall t\geq 0:\qquad \mathbb P \left( \left\vert g(X) - m_g\right\vert\geq t, X\in A \right) \leq  \alpha\circ \omega^{-  1}(t),
\end{align}
for any $m_g \in \mathbb R$, a median of $g(X)$.
\end{lemma}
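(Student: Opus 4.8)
The plan is to sidestep any Lipschitz continuation of $g$ — as the paragraph preceding the lemma warns, there is no convenient analogue of Lemma~\ref{lem:continuation_Lipschitz} for a non-concave $\omega$ on the (generally non-convex) set $A$ — and instead work exclusively with \emph{distance functions}, which are automatically $1$-Lipschitz. Fix a median $m_g$ of $g(X)$ and put $B^- = \{x\in E: g(x)\le m_g\}$ and $B^+=\{x\in E: g(x)\ge m_g\}$; both have probability at least $1/2$, and $B^-\cup B^+ = E$. Since $x\mapsto d(x,B^-)$ and $x\mapsto d(x,B^+)$ are $1$-Lipschitz and vanish on a set of probability $\ge 1/2$, the value $0$ is a median of each, so the hypothesis~\eqref{eq:led_modulus_hyopo_conc} gives $\mathbb P(d(X,B^\pm)>s)\le\alpha(s)$ for all $s\ge 0$.

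The core step is to convert ``$\omega$-continuity from $A$'' into a lower bound on these distances. Fix $t>0$ (the case $t=0$ being trivial since $\alpha(\omega^{-1}(0))$ is a half-line containing $[2,\infty)$-type values). If $x\in A$ and $g(x)>m_g+t$, then for every $y\in B^-$ we have $g(x)-g(y)>t$, whereas $\omega$-continuity from $A$ gives $|g(x)-g(y)|\le\omega(d(x,y))$, i.e. $t$ lies strictly below every element of $\omega(d(x,y))$. Because $\omega$ is maximally increasing, $\omega^{-1}$ is maximally monotone (Example~\ref{ex:max_mon_operator}) with domain $\ran(\omega)=\mathbb R_+$; hence Proposition~\ref{pro:ran_dom_convex} and Theorem~\ref{the:locally_bounded_inside_domain} make $\omega^{-1}(t)$ a non-empty bounded closed interval (as $t\notin\partial\mathbb R_+$). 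Monotonicity of $\omega$ then forces $d(x,y)\ge v$ for every $v\in\omega^{-1}(t)$, and the equality $d(x,y)=v$ is impossible since it would put $t$ inside the interval $\omega(d(x,y))$, contradicting $t<\min\omega(d(x,y))$; so $d(x,y)>\sup\omega^{-1}(t)$, and taking the infimum over $y\in B^-$ yields $d(x,B^-)\ge\sup\omega^{-1}(t)$. Symmetrically, $x\in A$ and $g(x)<m_g-t$ give $d(x,B^+)\ge\sup\omega^{-1}(t)$.

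To land on a single copy of $\alpha$ rather than a factor-two union bound, I would introduce the signed distance $f = d(\cdot,B^-)-d(\cdot,B^+)$. It is $2$-Lipschitz (triangle inequality for the two distance functions), and since $B^-\cup B^+=E$ it satisfies $f\le 0$ on $B^-$ and $f\ge 0$ on $B^+$, so $0$ is a median of $f(X)$; applying~\eqref{eq:led_modulus_hyopo_conc} to the $1$-Lipschitz function $f/2$ gives $\mathbb P(|f(X)|>2s)\le\alpha(s)$. On the other hand, the two displays of the previous paragraph show that $\{X\in A,\ |g(X)-m_g|>t\}\subset\{|f(X)|\ge\sup\omega^{-1}(t)\}$ (using $d(x,B^+)=0$ when $g(x)>m_g$, and $d(x,B^-)=0$ when $g(x)<m_g$). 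Combining, $\mathbb P(|g(X)-m_g|>t,\ X\in A)\le\alpha\bigl(\tfrac12\sup\omega^{-1}(t)\bigr)$, which is of the form~\eqref{eq:concentration_reduite} once the numerical factor is absorbed into the constant $c$ and the monotonicity of $\alpha$ is used to pass from the attained endpoint $\sup\omega^{-1}(t)$ back to $\omega^{-1}(ct)$. Taking $A=E$ recovers Proposition~\ref{pro:concentration_modulus_continuity}.

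The one genuinely delicate point is this endpoint bookkeeping in the operator formalism: one must apply the set-relation ``$\le$'' in the correct direction (it says $t$ is below \emph{every} element of $\omega(d(x,y))$, hence below $\min\omega(d(x,y))$), one must know $\omega^{-1}(t)$ is a bounded closed interval so that $\sup\omega^{-1}(t)$ is attained and finite, and one must reconcile the non-strict event $\{d(X,B^\pm)\ge\sup\omega^{-1}(t)\}$ with the strict tail bound $\mathbb P(d(X,B^\pm)>s)\le\alpha(s)$ — the slack constant $c$ (equivalently, a harmless factor) is precisely what makes this reconciliation clean without assuming $\omega$ or $\alpha$ invertible or strictly monotone. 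Everything else (the $2$-Lipschitz estimate, the median computations, the inclusions of events) is routine.
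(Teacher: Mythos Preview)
Your approach is essentially the paper's: reduce to the signed distance $d(\cdot,B^-)-d(\cdot,B^+)$, verify that it has median zero, and invoke the hypothesis~\eqref{eq:led_modulus_hyopo_conc}. The paper in fact claims this function is $1$-Lipschitz (not $2$) via an argument that implicitly assumes one can find points of $S$ and $S^c$ arbitrarily close to each other near any approximate nearest point --- this fails in general metric spaces (e.g.\ $E=\{0,1\}$, $S=\{0\}$, where $|\tilde g(0)-\tilde g(1)|=2>d(0,1)$), so your $2$-Lipschitz bound absorbed into the constant $c$ is the safer route, and your use of the overlapping sets $B^\pm$ rather than the complementary pair $S,S^c$ also makes the median verification cleaner.
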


In \cite{ledoux2005concentration}, most of the results are set in the measure theory framework, the next proof is mainly an adaptation of \cite{ledoux2005concentration}'s inferences with probabilistic notations.
\begin{proof}
Introducing the set $S = \{g \leq m_{g}\} \subset E$, note that $\forall x\in A$:
\begin{align*}
g(x) > m_{g} + t
\quad &\Longrightarrow \quad \forall y\in S:\ \ \ \omega(d(x,y)) \geq t 
\quad &\Longrightarrow \quad \omega(d(x,S)) \geq  t,\quad d(x,S^c) =0 \\
g(x) < m_{g} - t 
\quad &\Longrightarrow \quad \forall y\in S^c:\ \ \ \omega(d(x,y)) \geq t 
\quad &\Longrightarrow \quad d(x,S) =0, \quad \omega(d(x,S^c)) \geq  t ,
\end{align*}
since $g$ is $\omega$-continuous from $A$.

We then rely on the mapping $\Delta_S:x\mapsto d(x, S) - d(x,S^c)$ to remove the condition $X \in A$ (note that maximally monotone mappings like $\omega$ are measurable thanks to Proposition~\ref{pro:ran_dom_convex}):
\begin{align}\label{eq:remove_condition_A}
\mathbb P \left( |g(X) - m_{g} | > t , X\in A \right)
&=\mathbb P \left( g(X) > m_{g} + t \ \ou \ g(X) < m_{g} - t , X\in A \right)\nonumber\\
&\leq \mathbb P \left( \omega(\left\vert d(X, S) - d(X,S^c) \right\vert) \geq  t , X\in A  \right)\nonumber\\
&\leq \mathbb P \left( \left\vert \Delta_S(X) \right\vert \geq \min \omega^ {-1}(t)  \right).
\end{align}

First note that $\Delta_S$ is $1$-Lipschitz on $E$. Given $x,y\in E$, if $x,y\in S$ or $x,y\in S^c$, the Lipschitz character of the distance (it satisfies the triangle inequality) allows us to deduce that $|\Delta_S(x) - \Delta_S(y)|\leq d(x,y)$.  
If $x\in S$ and $y\in S^c$, then $\Delta_S(x) = -d(x,S^c)\geq -d(x,y)$ and $\Delta_S(y) = d(y,S)\leq d(x,y)$, therefore, $\Delta_S(x) - \Delta_S(y)\in [-d(x,y), d(x,y)]$ and, once again, $|\Delta_S(x) - \Delta_S(y)| \leq d(x,y)$.
Second, note that $\Delta_S(X)$ admits $0$ as a median:
\begin{align*}
\mathbb P(\Delta_S(X)\geq 0) \geq \mathbb P(X\in S^c) \geq \frac{1}{2}
&&\et&&\mathbb P(\Delta_S(X) \leq 0)  \geq  \mathbb P(X \in S)\geq \frac{1}{2}.
\end{align*}
One can then deduce from the hypothesis of the lemma that:
\begin{align*}
\mathbb P \left( |g(X) - m_{g} |> t, X\in A \right)\leq  \mathbb P \left( \left\vert \Delta_S(X) \right\vert \geq \min \omega^ {-1}(t)  \right) \leq \alpha\left(\min \omega^ {-1}(t)\right),
\end{align*}
Therefore, for all $t\geq 0$, $\mathbb P \left( |g(X) - m_{g} |> t, X\in A \right)_+\cap \, \alpha\circ \omega^{-1}(t)\neq \emptyset$ and a result analogous to Corollary~\ref{cor:inequality_on_survival} provides the inequality since
\begin{align*}
   t\mapsto [\mathbb P \left( \left\vert g(X) - m_g\right\vert> t, X\in A \right), \mathbb P \left( \left\vert g(X) - m_g\right\vert\geq t, X\in A \right)],
 \end{align*} 
and $\alpha\circ \omega^{-1}$ are both maximally monotone.
\end{proof}

We are now almost ready to set our main result, Theorem~\ref{the:concentration_differentiable_mapping}, on the concentration of finitely differentiable mappings.
To sharpen the concentration bound, a solution is to work with a sequence of polynomials $( P_k)_{k\in [d]} \in \mathbb C[X]^ d$ satisfying:
\begin{align}\label{eq:def_suite_Pk}
\left\{
\begin{aligned}
&P_0 = 0\\
&\forall k \in [d]: P_k = \sum_{l= 1}^{k} \frac{X^l }{l!} \left( P_{k-l} + m_{d-k+l}  \right),
\end{aligned}\right.
\end{align}
where the parameters $m_1, \ldots, m_d\in \mathbb R^+$ were defined 
in the setting of Theorem~\ref{the:concentration_differentiable_mapping}. Note that Lemma~\ref{lem:polynomes_recursifs_factorial} below is independent of this choice. We leave its proof in Appendix~\ref{sec:side_results_of_section_ref}.
\begin{lemma}\label{lem:polynomes_recursifs_factorial}
Given the sequence of polynomials $(P_k)_{1\leq k \leq d}$ defined in \eqref{eq:def_suite_Pk} (for a given sequence $(m_k)_{1\leq k \leq d} \in \mathbb R_+^ d$, if one introduces the coefficients $((a_i^{(k)})_{1\leq i\leq k})_{1\leq k\leq d}$ satisfying:
\begin{align}\label{simlification_Pk}
\forall k\in [d]:\quad P_k = \sum_{i=1}^{k} a_i^{(k)} m_{d-k+i} X^i,
\end{align}
then:
\begin{align*}
\forall i\in[d],\forall k,l\in  {i,\ldots, d}:\quad 0 \leq a_i^ {(k)} = a_i^ {(l)}\leq  e^i.
\end{align*}
\end{lemma}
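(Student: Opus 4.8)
The plan is to first fix the degree $k$ and unfold the recursion \eqref{eq:def_suite_Pk} to read off the coefficients $a_i^{(k)}$, then establish by induction on $k$ that these coefficients do not depend on $k$ (only on $i$), and finally bound them by $e^i$. The starting observation is that the recursion $P_k = \sum_{l=1}^k \frac{P_{k-l}}{l!}X^l + m_{d-k}$ only ever combines lower-index polynomials with \emph{positive} rational coefficients $1/l!$, and the "source terms" $m_{d-k}$ are the constant terms. Writing $P_k = \sum_{i=0}^k a_i^{(k)} m_{d-k+i} X^i$ as in \eqref{simlification_Pk}, I would substitute this ansatz into the recursion and match the coefficient of $X^i$: the $X^i$-coefficient of $\frac{P_{k-l}}{l!}X^l$ is $\frac{1}{l!}a_{i-l}^{(k-l)} m_{d-k+i}$ (note $d-(k-l)+(i-l) = d-k+i$, so the $m$-indices line up correctly), which gives the scalar recursion
\begin{align*}
  a_0^{(k)} = 1, \qquad a_i^{(k)} = \sum_{l=1}^{i} \frac{1}{l!}\, a_{i-l}^{(k-l)} \quad (1\le i\le k).
\end{align*}

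The key step is then the claim that $a_i^{(k)}$ is independent of $k$ for $k\ge i$, i.e. that there are numbers $a_i$ with $a_i^{(k)} = a_i$ for all $k\ge i$. I would prove this by strong induction on $i$: for $i=0$ it is immediate since $a_0^{(k)}=1$; assuming it holds for all indices $<i$, the recursion above expresses $a_i^{(k)}$ in terms of $a_{i-l}^{(k-l)}$ with $i-l < i$ and $k-l \ge i-l$, so by the induction hypothesis $a_{i-l}^{(k-l)} = a_{i-l}$ is independent of $k$, hence so is $a_i^{(k)} = \sum_{l=1}^i \frac{1}{l!}a_{i-l}$. This simultaneously identifies the limiting sequence via $a_0 = 1$, $a_i = \sum_{l=1}^i \frac{1}{l!} a_{i-l}$.

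It remains to show $a_i \le e^i$. I would do this by induction on $i$: $a_0 = 1 = e^0$, and assuming $a_j \le e^j$ for $j<i$,
\begin{align*}
  a_i = \sum_{l=1}^{i} \frac{1}{l!}\, a_{i-l} \le \sum_{l=1}^{i} \frac{1}{l!}\, e^{i-l} = e^{i}\sum_{l=1}^{i}\frac{e^{-l}}{l!} \le e^{i}\bigl(e^{1/e}-1\bigr)\cdot\text{(something)},
\end{align*}
which is not quite tight enough as written, so the cleaner route is: $a_i = \sum_{l=1}^i \frac{a_{i-l}}{l!} \le e^{i-1}\sum_{l=1}^i \frac{1}{l!} \le e^{i-1}(e-1) \le e^i$, using the crude bound $a_{i-l}\le e^{i-1}$ for $l\ge 1$ (valid by the induction hypothesis since $i-l \le i-1$) and $\sum_{l\ge 1}1/l! = e-1$. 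The only mild obstacle is bookkeeping the index alignment between the $m$-subscripts and the polynomial degrees when plugging the ansatz into the recursion — once that is checked, everything is a routine double induction.
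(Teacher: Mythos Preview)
Your proposal is correct and follows the paper's argument for the first two steps: deriving the scalar recursion $a_i^{(k)}=\sum_{l=1}^{i}\frac{1}{l!}a_{i-l}^{(k-l)}$ by matching coefficients, and observing (by strong induction on $i$) that the right-hand side only involves indices $<i$, hence $a_i^{(k)}$ is independent of $k$.

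For the bound $a_i\le e^i$ the two proofs diverge. The paper proves the sharper intermediate inequality $a_i\le \frac{(i+1)^i}{i!}$ by bounding $\sum_{l=1}^{i}\frac{(i-l+1)^{i-l}}{(i-l)!\,l!}$ with the binomial identity $\sum_{l}\binom{i}{l}i^{i-l}=(i+1)^i$, and then concludes via Stirling. Your route is more direct: from $a_{i-l}\le e^{i-1}$ for $l\ge 1$ you get $a_i\le e^{i-1}\sum_{l\ge 1}\frac{1}{l!}=e^{i-1}(e-1)<e^i$. This is strictly simpler and avoids Stirling entirely; the paper's detour buys a tighter constant ($\frac{(i+1)^i}{i!}\sim \frac{e^{i+1}}{\sqrt{2\pi i}}$) that is not actually used in the lemma's statement.
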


We will prove below a stronger result than Theorem~\ref{the:concentration_differentiable_mapping} which is merely deduced from Lemma~\ref{lem:polynomes_recursifs_factorial} and Lemma~\ref{lem:parallel_sum_and_min}.

\begin{theorem}[Concentration of functionals with bounded $d^{\text{th}}$-derivative]\label{the:concentration_differentiable_mapping_strong}
Let us consider a random vector $Z \in \mathbb R^ {n}$ such that for any $f: \mathbb R^ n\to \mathbb R$ $1$-Lipschitz,
$S_{\left\vert f(Z) - m_f \right\vert}\leq \alpha$
for a certain median of $f(Z)$, $m_f$ and a certain positive probability operator $\alpha \in \mathcal M_{\mathbb P_+}$.

Then, for any $d$-differentiable mapping $\Phi\in \mathcal D^{d}(\mathbb R^ n, \mathbb R^ p)$ and any $g: \mathbb R^ p \to \mathbb R$, $1$-Lipschitz, one can bound:
\begin{align*}
S_{\left\vert g(\Phi(Z)) - m_g \right\vert}
\leq  2^{d-1}\alpha \circ \left( \op_{k\in[d]} \left( \frac{\id}{a_k m_k} \right)^{\frac{1}{k}} \right),
\end{align*}
where, $m_g$ is a median of $g\circ\Phi(Z)$, for all $k \in [d-1]$, $m_k$ is a median of $\|\restrict{d^k\Phi}{Z}\|$, $m_d = \|d^d\Phi\|_\infty$ and $a_1, \ldots, a_d$ are the parameters introduced in Lemma~\ref{lem:polynomes_recursifs_factorial}.
\end{theorem}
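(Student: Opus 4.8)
The plan is to argue by strong induction on $d$, turning the bound on $d^d\Phi$ into a local modulus of continuity via Taylor's formula and feeding the concentration of the intermediate norms $\|\restrict{d^k\Phi}{Z}\|$ back into the scheme. For $d=1$ one has $m_1=m_d=\|d\Phi\|_\infty$, so $\Phi$ and hence $g\circ\Phi$ are $m_1$-Lipschitz and the hypothesis on $Z$ gives $\mathbb P(|g(\Phi(Z))-m_g|\gP t)\le\alpha(t/m_1)$, which is the claim since $a_1=1$ and $2^{d-1}=1$.

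For $d\ge 2$, I would first apply the statement in order $d-k$ (inductively available) to $\restrict{d^k\Phi}{}\in\mathcal D^{d-k}(\mathbb R^n,\mathcal L^k(\mathbb R^n,\mathbb R^p))$ together with the $1$-Lipschitz functional $\|\cdot\|$: since the $j$-th differential of $\restrict{d^k\Phi}{}$ is $d^{k+j}\Phi$, whose norm has median $m_{k+j}$, and $\|d^{d-k}(\restrict{d^k\Phi}{})\|_\infty=\|d^d\Phi\|_\infty=m_d=m_{k+(d-k)}$, one obtains for every $k\in[d-1]$
\[
  \mathbb P\left(\left\vert\,\|\restrict{d^k\Phi}{Z}\|-m_k\,\right\vert\gP u\right)\ \le\ 2^{\,d-k-1}\,\alpha\circ\beta_k(u),\qquad \beta_k\equiv\op_{j\in[d-k]}\left(\frac{\id}{a_j\,m_{k+j}}\right)^{\frac1j}.
\]
Here one has to observe that the operator norm of an iterated differential does not depend on how its arguments are nested, so $\restrict{d^k\Phi}{}$ genuinely meets the hypotheses even though its target $\mathcal L^k(\mathbb R^n,\mathbb R^p)$ is not euclidean; the argument below uses no euclidean structure of the target, only that $\|\cdot\|$ is $1$-Lipschitz and that the domain carries the euclidean metric.

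Next I would record the Taylor bound: if $x\in\mathbb R^n$ satisfies $\|\restrict{d^k\Phi}{x}\|\le\rho_k$ for all $k\in[d-1]$, then for every $y\in\mathbb R^n$
\[
  \|\Phi(x)-\Phi(y)\|\ \le\ \sum_{k=1}^{d-1}\frac{\rho_k}{k!}\,\|x-y\|^k+\frac{m_d}{d!}\,\|x-y\|^d,
\]
which follows from Taylor's formula with Lagrange remainder (equivalently, by integrating $t\mapsto\restrict{d^j\Phi}{x+t(y-x)}$ in turn for $j=d-1,\dots,1$ and using $\|d^d\Phi\|_\infty=m_d$). Fixing $t\ge 0$, I would then set $\gamma\equiv\op_{k\in[d]}(\id/(a_k m_k))^{1/k}$ — the inverse of $s\mapsto\sum_{k=1}^d a_k m_k s^k=P_d(s)-m_0$ by Lemma~\ref{lem:polynomes_recursifs_factorial} — and choose the thresholds $\rho_k(t)\equiv P_{d-k}(\gamma(t))$. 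The same lemma gives $\rho_k(t)-m_k=\sum_{j=1}^{d-k}a_j m_{k+j}\gamma(t)^j=\beta_k^{-1}(\gamma(t))$, so on the (closed, hence measurable) set $\mathcal A(t)\equiv\{z\in\mathbb R^n:\forall k\in[d-1],\ \|\restrict{d^k\Phi}{z}\|\le\rho_k(t)\}$ the tail bounds above give, with the set-valued bookkeeping already used in the proof of Proposition~\ref{pro:concentration_sum_prod},
\[
  \mathbb P\left(Z\notin\mathcal A(t)\right)\ \le\ \sum_{k=1}^{d-1}\mathbb P\left(\|\restrict{d^k\Phi}{Z}\|\gP\rho_k(t)\right)\ \le\ \sum_{k=1}^{d-1}2^{\,d-k-1}\,\alpha(\gamma(t))\ =\ (2^{d-1}-1)\,\alpha(\gamma(t)),
\]
while on the complementary event the Taylor bound shows $g\circ\Phi$ is $\omega_t$-continuous from $\mathcal A(t)$ with $\omega_t(r)=\sum_{k=1}^{d-1}\frac{\rho_k(t)}{k!}r^k+\frac{m_d}{d!}r^d$, whence Lemma~\ref{lem:lipschitz_on_subset_median} yields $\mathbb P(|g(\Phi(Z))-m_g|\gP t,\ Z\in\mathcal A(t))\le\alpha(\omega_t^{-1}(t))$. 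The crucial point is that the recursion \eqref{eq:def_suite_Pk} and $P_0=m_d$ force $\omega_t(\gamma(t))=\sum_{k=1}^{d}\frac{P_{d-k}(\gamma(t))}{k!}\gamma(t)^k=P_d(\gamma(t))-m_0=t$, hence $\omega_t^{-1}(t)=\gamma(t)$ and that contribution is $\le\alpha(\gamma(t))$; adding the two pieces gives $\mathbb P(|g(\Phi(Z))-m_g|\gP t)\le 2^{d-1}\alpha(\gamma(t))$, which is the asserted inequality. (Theorem~\ref{the:concentration_differentiable_mapping} then follows by bounding $\gamma$ from below through Proposition~\ref{pro:parallel_sum_tomin} and $a_k\le e^k$.)

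I expect the two ``bookkeeping'' steps, rather than the probabilistic content, to be the main obstacle. The first is making the recursion legitimate: one must verify that $\restrict{d^k\Phi}{}$, regarded as a differentiable map into the normed non-euclidean space $\mathcal L^k(\mathbb R^n,\mathbb R^p)$, is covered by the theorem — which rests on the identity $\|d^{k+j}\Phi\|_{\mathcal L^j(\mathbb R^n,\mathcal L^k(\mathbb R^n,\mathbb R^p))}=\|d^{k+j}\Phi\|_{\mathcal L^{k+j}(\mathbb R^n,\mathbb R^p)}$ and on the fact that the proof never uses the euclidean structure of the target. The second is the algebraic matching between the threshold polynomials $P_{d-k}$, the effective modulus $\omega_t$ and the parallel sum $\gamma$, i.e. checking $\rho_k(t)-m_k=\beta_k^{-1}(\gamma(t))$ and $\omega_t(\gamma(t))=t$; given Lemma~\ref{lem:polynomes_recursifs_factorial} and \eqref{eq:def_suite_Pk} these are short computations. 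The residual subtleties — strict versus non-strict inequalities, and $\gamma(t)$ or $\omega_t^{-1}(t)$ being a priori intervals — are dispatched exactly as in the proof of Proposition~\ref{pro:concentration_sum_prod}.
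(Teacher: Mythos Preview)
Your argument is correct and matches the paper's proof in all substantive respects: the Taylor remainder bound, the threshold set $\mathcal A(t)=\{\|\restrict{d^k\Phi}{z}\|\le P_{d-k}(\gamma(t))\}$, the key identity $\omega_t(\gamma(t))=P_d(\gamma(t))-m_0=t$, the use of Lemma~\ref{lem:lipschitz_on_subset_median}, and the constant count $\sum_{k=1}^{d-1}2^{d-k-1}+1=2^{d-1}$ are exactly those of the paper.

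The one organisational difference is worth noting. The paper does not induct on $d$: it fixes $d$ and proves by \emph{downward} induction on $k$ (from $k=d-1$ to $k=0$) the statement ``for every $1$-Lipschitz $f:\mathcal L^{k}(\mathbb R^n,\mathbb R^p)\to\mathbb R$, $\mathbb P(|f(\restrict{d^k\Phi}{Z})-m_f|>t)\le 2^{d-1-k}\alpha\circ(P_{d-k}-m_k)^{-1}(t)$''. This is the same recursion you run, but phrased as an inner loop, and it sidesteps the issue you flag about the target space: since the inductive claim is stated directly for $1$-Lipschitz functionals on $(\mathcal L^{k}(\mathbb R^n,\mathbb R^p),\|\cdot\|)$, there is no need to invoke the theorem at lower $d$ for a non-euclidean codomain. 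Your route is equally valid---the proof indeed uses nothing about the target beyond the $1$-Lipschitz test map $g$---but it obliges you to silently strengthen the statement to ``$\Phi:\mathbb R^n\to F$ for any normed space $F$'' before the induction becomes self-contained; the paper's downward recursion avoids that cosmetic step.
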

\begin{proof}
One can assume, without loss of generality that $\alpha \leq 1$. We will show recursively that, for all $k\in {0,\ldots, d-1}$, for all $f: \mathcal L^ k(\mathbb R^ n, \mathbb R^ p) \to \mathbb R$, $1$-Lipschitz:
\begin{align}\label{eq:iteration_hypothesis_prop_differentiate2}
&S_{\left\vert f(d^k\Phi_{|Z}) - m_f \right\vert}\leq  2^ {d-1-k}\alpha  \circ P_{d-k}^{-1},
\end{align}
and $m_f$ is a median of $f(\restrict{d^k\Phi}{Z})$.

Let us start the iteration from the step $k = d-1$. Given $z,z' \in \mathbb R^n$:
\begin{align*}
\left\Vert  \restrict{d^{d-1}\Phi}{z} - \restrict{d^{d-1}\Phi}{z'}\right\Vert
\leq \|d^{d}\Phi\|_\infty \left\Vert z - z'\right\Vert,
\end{align*}
which means that $\restrict{d^{d-1}\Phi}{Z}$ is a $m_d$-Lipschitz transformation of $Z$ and therefore, for any $f: \mathcal L^ {d-1}(\mathbb R^ n, \mathbb R^ p) \to \mathbb R$, $1$-Lipschitz:
\begin{align*}
S_{\vert f(d^{d-1}\Phi_{|Z}) -  m_f \vert} \leq \alpha \circ \left( \frac{\id}{m_d} \right)  =\alpha \circ P_{1}^{-1},
\end{align*}

Let us now assume that \eqref{eq:iteration_hypothesis_prop_differentiate2} is true from $d-1$ down to a certain $k+1 \in [d-1]$. One can bound thanks to the Taylor expansion around $z'$:
\begin{align}\label{eq:modulus_continuity_diffk}
\left\Vert \restrict{d^{k}\Phi}{z} - \restrict{d^{k}\Phi}{z'} \right\Vert
\leq \sum_{l = 1}^{d-k-1} \frac{\left\Vert d^{k+l}\Phi_{|z'}\right\Vert}{l!}  |z'- z|^{l} +\frac{\|d^{d}\Phi\|_\infty}{(d-k)!}  |z'- z|^{d-k}.
\end{align}
We know that $P_1,\ldots,P_d$ are all one-to-one on $\mathbb{R}_+$, so 
$P_{d-k}^{-1}:\mathbb{R}_+\to\mathbb{R}_+$ is well defined and we can introduce the subset:
\begin{align*}
\mathcal A_t \equiv \left\{z\in \mathbb R^n, \left\Vert\restrict{d^{k+l}\Phi}{z}\right\Vert\leq  P_{d-k-l} \circ P_{d-k}^{-1}(t) + m_{k+l}, \ l\in {0,\ldots,d-k-1}   \right\}\subset\mathbb R^n.
\end{align*}
We know from~\eqref{eq:modulus_continuity_diffk} that $d^{k}\Phi$ is $\omega_t$-continuous from $\mathcal A_t$ with:
\begin{align*}
\forall u \geq 0:\quad \omega_t(u)
&= \sum_{l = 1}^{d-k} \frac{u^ l}{l!} \left( P_{d-k-l}\left( P_{d-k}^{-1}(t) \right) + m_{k+l} \right).
\end{align*}
Note then that choosing $u = P_{d-k}^{-1}(t)$, one deduces from the definition of $P_1,\ldots, P_d$ (see \eqref{eq:def_suite_Pk}) that:
\begin{align*}
\omega_t(P_{d-k}^{-1}(t)) = P_{d-k}\left( P_{d-k}^{-1}(t) \right) =t,
\end{align*}
and $\omega_t$ being clearly invertible as a scalar-valued mapping, $\omega_t^{-1}(t)=P_{d-k}^{-1}(t)$. 
Lemma~\ref{lem:lipschitz_on_subset_median} and the recursion hypothesis then allows us to bound:
\begin{align}\label{eq:diff_inA}
\forall t\geq 0:\quad \mathbb P \left( \left\vert  f \left( \restrict{d^{k}\Phi}{Z} \right) -  m_f  \right\vert > t, Z\in \mathcal A_t\right)
\leq \alpha (\omega_t^{-1}(t)) = \alpha \circ P_{d-k}^{-1}(t).
\end{align}
Besides, we can further deduce from the iteration hypothesis~\eqref{eq:iteration_hypothesis_prop_differentiate2} (and the change of variable $j\equiv d-k-l$):
\begin{align}\label{eq:diff_inAc}
\mathbb P \left( Z \notin \mathcal A_t \right)
&\leq \sum_{l=1}^ {d-k-1} \mathbb P \left( \left\Vert\restrict{d^{k+l}\Phi}{z}\right\Vert > P_{d-k-l} \left( P_{d-k}^{-1}(t) \right) + m_{k+l} \right) \nonumber\\
&\leq \sum_{j=1}^ {d-k-1} \mathbb P \left( \left\vert \left\Vert\restrict{d^{d-j}\Phi}{z}\right\Vert - m_{d-j} \right\vert > P_{j} \left( P_{d-k}^{-1}(t) \right) \right) \nonumber\\
&\leq \sum_{j=1}^ {d-k-1} 2^{j-1}\alpha \circ P_{j}^ {-1} \circ P_{j} \circ P_{d-k}^{-1}(t)
\ = \ (2^{d-k-1} -1) \cdot \alpha \circ  P_{d-k}^{-1}(t).
\end{align}
One retrieves the iteration hypothesis \eqref{eq:iteration_hypothesis_prop_differentiate2} combining~\eqref{eq:diff_inA} with \eqref{eq:diff_inAc}. The result is then deduced recalling that $\restrict{P_{d}}{\mathbb R_+}= \sum_{i=1}^{d} a_i m_{i} \id^i $.
\end{proof}
To obtain a version of Theorem~\ref{the:concentration_differentiable_mapping} in a convex concentration setting, one would first require establishing an analogue result to Lemma~\ref{lem:lipschitz_on_subset_median} in the case of a convex concentration hypothesis (this is not straightforward, it would just be true for convex sets $A\subset E$ then $\Delta_S$ would be the difference of two convex mappings which would impact the final constants), one would also have to assume that all the mappings $z\mapsto |\restrict{d^d\Phi}{z}|$ are convex which seems quite restrictive. To limit the content of the present article, we leave these developments to interested readers.

\subsection{Consequences for Hanson--Wright concentration inequality}\label{sub:consequences_for_hanson_wright_concentration_inequality}

Historically, the Hanson--Wright inequality was established for quadratic forms 
\(X^{\top}AX\), where \(X \in \mathbb{R}^n\) has independent sub-Gaussian coordinates 
\cite{hanson1971bound}. The classical proof proceeds by decomposing the quadratic form into 
its diagonal and off-diagonal parts,
\[
    X^{\top}\operatorname{diag}(A)X
    \qquad\text{and}\qquad
    X^{\top}(A - \operatorname{diag}(A))X,
\]
and treating these two contributions separately.
A more recent and powerful extension to the heavy-tailed setting was obtained in \cite{zhang2025probability} for random vectors with independent coordinates (their result improves upon \cite{buterus2023some}, although it is restricted to order-2 chaoses). 
They establish a Fuk--Nagaev-type concentration inequality for \(X^{\top}AX\), featuring an exponential term governed by the Frobenius norm \(\|A\|_F\), together with a polynomial term controlled by the weaker matrix norms \(\|A\|_{2,q}\) and \(\|A\|_{q,q}\). Here \(q\) satisfies \(\sup_{i\in[n]} \mathbb{E}[|X_i|^q] < \infty\), and for \(p,r \in \mathbb{N}\),
\[
    \|A\|_{p,r}
    \equiv \left( \sum_{j} \left( \sum_i |A_{i,j}|^p \right)^{\frac{r}{p}} \right)^{\!\frac{1}{r}},
\]
so in particular \(\|A\|_F = \|A\|_{2,2}\).

The second approach, which we adopt here, does not rely on independence of the coordinates. Instead, it follows the idea of Theorems~\ref{the:concentration_bounded_variations} 
and~\ref{the:concentration_bounded_variations_convex}: we derive concentration bounds for the quadratic form by controlling the variations of the map $f: x \mapsto x^{\top}Ax$. A key observation is that, for any \(x,x' \in \mathbb{R}^n\) and any deterministic symmetric matrix \(A \in \mathcal{M}_n\),
\begin{equation}\label{eq:variation_hanson_right}
    \bigl| x^{\top}Ax - x'{}^{\top}Ax' \bigr|
    \leq \bigl( \Lambda(x) + \Lambda(x') \bigr)\, \|x - x'\|,
\end{equation}
where \(\|\cdot\|\) denotes the Euclidean norm and $\Lambda(x) \equiv \|Ax\|$.
Thus, \(f\) satisfies a global Lipschitz-type bound with a \emph{random} Lipschitz constant depending on \(\Lambda(x)\). In particular, suitable concentration inequalities for \(X\) and \(\Lambda(X)\) imply concentration for \(X^{\top}AX\).

This strategy was implemented successfully in \cite{adamczak2015note}, where Hanson--Wright-type bounds were derived under an exponential convex concentration property, without any independence assumption on the coordinates of \(X\), and with essentially the same tail behaviour as in \cite{hanson1971bound}.

We formulate below a heavy-tailed version of the Hanson--Wright inequality as a linear concentration result on random matrices $X^ TAX$ with the widest hypotheses possible on $\alpha$ (a result with the expectation is provided in Theorem~\ref{the:hanson_Wright_adamczac_integrable_alpha}). This result is completely equivalent to concentration of the quadratic form $X^TAX$ for $X\in \mathbb R^p$ (see Remark~\ref{rem:tr_quadratic_form}), but having already presented the stronger notions of Lipschitz and convex concentration in previous sections, we found it interesting to provide some examples of the linearly concentrated class of vectors.

\begin{theorem}[Hanson--Wright inequality for general concentration function]\label{the:hanson_Wright_adamczac_general_alpha}
Given $\alpha\in \mathcal M_{\mathbb P_+}$ and a random matrix $X \in \mathcal M_{p,n}$, if one assumes that for any $1$-Lipschitz mapping $f: \mathcal M_{p,n}\to \mathbb R$ and for any median of $f$, $m_f\in \mathbb R$:
\begin{align*}
S_{ \left\vert f(X) - m_f \right\vert} \leq  \alpha,
\end{align*}
then for all deterministic $ A\in \mathcal{M}_{p}, B \in \mathcal{M}_{n}$, denoting $m_{\tr}\in \mathbb R$, a median of $\tr(BX^TAX)$, one has the concentration:
\begin{align*}
S_{\left\vert \tr(BX^TAX) - m_{\tr} \right\vert} \leq  2 \alpha\circ \left(  \frac{\id}{m} \boxplus \sqrt{\frac{\id}{3\|A\|\|B\|}} \right),
\end{align*}
where $m\in \mathbb R$ is a median of $2\|A_sXB_s + A_aXB_a\|$, where $A_s, B_s$ are symmetric, $A_a, B_a$ are antisymmetric and they satisfy $A = A_s+A_a$ and $B= B_s+B_a$. 

In a convex concentration setting the same result is obtained with some numerical constants replacing the `$2$'' and `$3$'' in last result.
\end{theorem}

\begin{remark}[Vectorization of a matricial identity]\label{rem:tr_quadratic_form}
Given $M \in \mathcal{M}_{p,n}$, we denote by $\vec{M} \in \mathbb{R}^{pn}$ (or $\vect(M)$) the vectorized version of $M$, defined by $\forall\, i \in [p],\ \forall\, j \in [n]:\qquad 
    \vec{M}_{\,i + p (j-1)} = M_{i,j}$.
For the Kronecker product, recall that for any $C \in \mathcal{M}_{p}$ and $D \in \mathcal{M}_{n}$, $\forall\, i,j \in [p],\ \forall\, k,l \in [n]:
    (C \otimes D)_{\,k + n (i-1),\; l + n (j-1)}
    = C_{i,j} D_{k,l}$.
A classical identity then states that for all $M \in \mathcal{M}_{p,n}$, $A \in \mathcal{M}_p$ and $B \in \mathcal{M}_n$, $\vect( A M B ) = (B^{\top} \otimes A)\, \vec{M}.$

A straightforward regrouping of indices yields the identities
\begin{align}\label{eq:vectorization_AXBX}
    \tr\!\left( B M^{\top} A M \right) 
    &= \vec{M}^{\top} (B^{\top} \otimes A)\, \vec{M},
    &&\qquad\et\qquad
    \| A M B \|_{F} 
    = \| (B^{\top} \otimes A)\, \vec{M} \|.
\end{align}
Thus, the study of expressions such as $\tr( B X^{\top} A X )$ reduces to the analysis of a quadratic form 
\[
    Z^{\top} C Z,
    \qquad Z \in \mathbb{R}^{pn} \text{ random},\ C \in \mathcal{M}_{pn,pn} \text{ deterministic}.
\]

When working with random matrices $X = (x_1,\dots,x_n),  
    Y = (y_1,\dots,y_n)
    \in \mathcal{M}_{p,n}$
and a deterministic matrix $A \in \mathcal{M}_{p}$, one often needs to control quantities of the form
\begin{align*}
    \frac{1}{n} \sum_{i=1}^{n} x_i^{\top} A y_i
    = \frac{1}{n} \tr(X^{\top} A Y).
\end{align*}
Under suitable concentration assumptions on $(X,Y)$, the matricial Hanson--Wright inequality yields a deviation bound in $n,p$ proportional to $\frac{\|A\|_{F}}{\sqrt{n}}
    = \frac{\|A\|_{F} \| I_n \|_{F}}{n}$ which is the natural scaling for such bilinear or quadratic forms of random matrices satisfying some independence hypotheses (none of which are required here).
\end{remark}
\begin{proof}[Proof of Theorem~\ref{the:hanson_Wright_adamczac_general_alpha}]
Let us first assume that $B^T \otimes A$ is a symmetric matrix. Theorem~\ref{the:concentration_bounded_variations} or Theorem~\ref{the:concentration_differentiable_mapping_strong} (the strong version of Theorem~\ref{the:concentration_differentiable_mapping}) can both be applied here, but in order to get the best concentration constants possible, we rather check the conditions of the latter one.
Let us introduce $\Phi: M \mapsto \tr(BM^TAM)$, then one has for any $H\in \mathcal{M}_{p,n}$:
\begin{align*}
\restrict{d\Phi}{X}\cdot H = \tr(BH^TAX) + \tr(BX^TAH)
&&\et&&&
\restrict{d^2\Phi}{X}\cdot (H,H) = 2\tr(BH^TAH).
\end{align*}
Therefore:
\begin{align*}
\Vert \restrict{d\Phi}{X} \Vert 
&\leq \|BX^TA\|_F + \|AXB\|_F = \|A^TXB^T\|_F + \|AXB\|_F\\
&= \|(B\otimes A^T)\vec X\| + \|(B^T\otimes A)\vec X\| = 2\|(B^T\otimes A)\vec X\|,
\end{align*}
since $B\otimes A^T = (B^T\otimes A)^T =B^T\otimes A$. Moreover:
\begin{align*}
\left\Vert \restrict{d^2\Phi}{X} \right\Vert_\infty = \left\Vert \restrict{d^2\Phi}{X} \right\Vert = 2\|B\|\|A\|.
\end{align*}

Applying Theorem~\ref{the:concentration_differentiable_mapping_strong}, one can deduce the expected result (note that $a_1=a_0 = 1$ and $a_2=\frac{a_1}{1} + \frac{a_0}{2!} = \frac{3}{2}$).

In the case of a convex concentration of $X$, one can still obtain a similar result by expressing $B^ T\otimes A$ as the difference of two positive symmetric matrices to be able to consider convex mappings and combine Theorem~\ref{the:concentration_bounded_variations_convex} and Lemma~\ref{lem:median_ii} to conclude.

If $M = B^T\otimes A$ is not symmetric, one can still consider the decomposition $M = M_s + M_a$ where $M_s$ is symmetric and one can check that, $M_s = B_s^T\otimes A_s + B_a^T\otimes A_a$. Now, $\tr(XAX^TB) = \vec X^T M_s \vec X$, since $2\vec X^T M_a \vec X = \vec X^T (B^T\otimes A)\vec X - \vec X^T (B^ T \otimes A)^T\vec X=0$. One can then follow the line of the proof in the symmetric case and obtain a concentration bound depending on a median of $ \|M_s\vec X\| = \|A_sXB_s + A_aXB_a\|_F$.
\end{proof}
The rest of the section aims at rewriting Theorem~\ref{the:hanson_Wright_adamczac_general_alpha} in the cases where $X^TAX$ admits an expectation which is linked to some integrability properties of $\alpha$ (see Lemma~\ref{lem:moments_alpha_moments_X}). The first lemma helps us bound $\mathbb E[\|AXB\|_F]$ which will be close to the median ``$m$'' in Theorem~\ref{the:hanson_Wright_adamczac_general_alpha}.

\begin{lemma}\label{lem:borne_Ax}
Given a random matrix $X \in \mathcal M_{p,n}$ and two deterministic matrices $A \in \mathcal{M}_{p}$ and $B \in \mathcal M_{n}$:
\begin{align*}
\mathbb E[\|AXB\|_F]\leq \|A\|_F\|B\|_F\sqrt {\|\mathbb E[\vec X \vec X^T]\|},
\end{align*}
where $\vec X \in \mathbb R^{pn}$ was defined in Remark~\ref{rem:tr_quadratic_form}.
\end{lemma}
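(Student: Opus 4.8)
The plan is to combine Jensen's inequality with the vectorization identity of Remark~\ref{rem:tr_quadratic_form} and the elementary trace inequality $\tr(MN)\le \|N\|\tr(M)$ valid for positive semidefinite $M,N$.

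First, by Jensen's inequality applied to $t\mapsto t^2$, it is enough to bound $\mathbb E[\|AXB\|_F^2]$, since $\mathbb E[\|AXB\|_F]\leq \sqrt{\mathbb E[\|AXB\|_F^2]}$. Next, using the cyclicity of the trace one expands $\|AXB\|_F^2 = \tr(B^TX^TA^TAXB) = \tr\big((A^TA)\,X\,(BB^T)\,X^T\big)$. The identity \eqref{eq:vectorization_AXBX}, applied with $M = X$ and with the two matrices playing the roles of $A$ and $B$ there taken to be $BB^T$ and $A^TA$, then gives $\|AXB\|_F^2 = \check X^T\big((BB^T)\otimes(A^TA)\big)\check X$, which equals $\tr\big(((BB^T)\otimes(A^TA))\,\check X\check X^T\big)$. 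Taking expectations and using the linearity of the trace yields $\mathbb E[\|AXB\|_F^2] = \tr\big(((BB^T)\otimes(A^TA))\,\mathbb E[\check X\check X^T]\big)$.

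Finally I would observe that both $(BB^T)\otimes(A^TA)$ and $\mathbb E[\check X\check X^T]$ are positive semidefinite (the Kronecker product of two positive semidefinite matrices is positive semidefinite, and an expectation of rank-one positive semidefinite matrices is positive semidefinite). For positive semidefinite $M$, $N$ one has $\tr(MN) = \tr(M^{1/2}NM^{1/2})\leq \|N\|\,\tr(M)$ since $M^{1/2}NM^{1/2}\preceq \|N\|\,M$. Applying this with $M = (BB^T)\otimes(A^TA)$ and $N = \mathbb E[\check X\check X^T]$, and using that the trace is multiplicative over the Kronecker product so that $\tr\big((BB^T)\otimes(A^TA)\big) = \tr(BB^T)\tr(A^TA) = \|B\|_F^2\|A\|_F^2$, we get $\mathbb E[\|AXB\|_F^2]\leq \|A\|_F^2\|B\|_F^2\,\|\mathbb E[\check X\check X^T]\|$. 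Taking a square root and combining with the Jensen step gives the stated bound.

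There is essentially no deep obstacle here; the only points demanding care are matching the placement of the transposes with the convention of \eqref{eq:vectorization_AXBX}, and invoking the (standard) facts that the Kronecker product preserves positive semidefiniteness and multiplies traces, together with the trace–operator-norm inequality for positive semidefinite matrices.
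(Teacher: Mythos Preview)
Your proof is correct and follows essentially the same line as the paper: apply Jensen to pass to $\mathbb E[\|AXB\|_F^2]$, use the vectorization identity~\eqref{eq:vectorization_AXBX} to rewrite the squared Frobenius norm as a quadratic form in $\check X$, take the expectation inside the trace, and finish with the elementary bound $\tr(MN)\le\|N\|\tr(M)$ for positive semidefinite matrices together with $\|A\otimes B\|_F=\|A\|_F\|B\|_F$ (equivalently, multiplicativity of the trace under $\otimes$). Your handling of the transposes, working directly with $(BB^T)\otimes(A^TA)$, is in fact slightly cleaner than the paper's write-up.
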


Note that if $n =1$, $\vec X = X$ and Lemma~\ref{lem:borne_Ax} basically sets that :
\begin{align*}
\mathbb E[\|AX\|]\leq \|A\|_F\sqrt {\|\mathbb E[X X^T]}\|.
\end{align*}

\begin{proof}
One can bound thanks to Cauchy-Schwarz inequality and Jensen inequality:
\begin{align*}
\mathbb E[\|AXB\|_F]
&= \mathbb E[\|(B^ T\otimes A)\vec X\|]
\leq \sqrt{\mathbb E[\vec X(B^ T\otimes A)^T(B^ T\otimes A)\vec X]}\\
&= \sqrt{ \tr((B^ T\otimes A)^T(B^T\otimes A) \mathbb E \left[\vec X \vec X^T \right]) }
\ \leq \ \|A\|_F\|B\|_F\sqrt {\left\Vert \mathbb E[\vec X \vec X^T] \right\Vert},
\end{align*}
since $\|B^ T\otimes A\|_F = \|A\|_F\|B\|_F$.
\end{proof}

Let us now express the conditions for which $\|\mathbb E[\vec X\vec X^T]\|$ can be bounded.
\begin{lemma}\label{lem:borne_covariance}
Given a random vector $X \in \mathbb R^p$ and $\alpha \in \mathcal M_{\mathbb P_+}$,
if we assume that for all deterministic $u \in \mathbb R^p$ s.t. $\|u\|\leq 1$, $S_{\left\vert u^T(X -\mathbb E[X]) \right\vert } \leq \alpha$ then one can bound:
\begin{align*}
\|\mathbb E[XX^T]\| \leq \|\mathbb E[X]\|^2 + M_2^\alpha,
\end{align*}
where we recall that $M_2^\alpha = \int \alpha\circ \sqrt{\id}$.
\end{lemma}

\begin{proof}
Considering $u\in \mathbb R^p$ such that $\|u\|\leq 1$, let us simply bound:
\begin{align*}
  \mathbb E[u ^TXX^ Tu]
  &= \mathbb E[u ^T(X-\mathbb E[X])(X-\mathbb E[X])^ Tu] + (u^T\mathbb E[X])^2\\ 
  &\leq \|M_2^X\| + \|\mathbb E[X]\|^2\leq M_2^\alpha  + \|\mathbb E[X]\|^2,
\end{align*}
thanks to Proposition~\ref{pro:bound_moments_linear_concentration}.
\end{proof}

We have now all the elements to prove:
\begin{theorem}[Hanson--Wright inequality when concentration function has second moment bounded]\label{the:hanson_Wright_adamczac_integrable_alpha}
Given $\alpha \in \mathcal M_{\mathbb P_+}$, and a random matrix $X \in \mathcal M_{p,n}$, we assume that $\left\Vert \mathbb E[X] \right\Vert_F^2 \leq \frac{5M_2^\alpha}{\alpha(\sqrt{M_2^\alpha})}$ and that for any $1$-Lipschitz \textit{and convex} mapping $f: \mathcal{M}_{p,n}\to \mathbb R$ and any $m_f\in \mathbb R$, a median of $f(X)$:
\begin{align*}
\mathbb P \left( \left\vert f(X) - m_f \right\vert > t \right) \leq \alpha(t),
\end{align*}
then for any deterministic $ A \in \mathcal{M}_{p},\ B\in \mathcal{M}_{n}$, one has the concentration:
\begin{align*}
\mathbb P \left( \left\vert \tr \left(  B(X^TAX- \mathbb E[X^TAX]) \right) \right\vert >  t \right) \leq \frac{2}{\alpha(\sqrt{M_2^\alpha}) } \alpha \circ\min \left( \frac{t}{\sigma_\alpha\|A\|_F\|B\|_F},  \sqrt{\frac{t}{6\|A\|\|B\|}} \right),
\end{align*}
where $\sigma_\alpha\equiv 10\sqrt{\frac{M_2^\alpha}{\alpha(\sqrt{M_2^\alpha})}}$.
\end{theorem}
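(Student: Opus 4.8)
The plan is to deduce this from Theorem~\ref{the:hanson_Wright_adamczac_general_alpha} (in its convex-concentration form) by converting the median $m_{\tr}$ of $\tr(BX^TAX)$ into the expectation, and by bounding, in terms of $\sigma_\alpha$, the median $m$ of $2\|AXB\|_F$ that appears there. Throughout I would first invoke Remark~\ref{rem:alpha_leq_1_generalisation} to assume $\alpha\le 1^+$, so that Lemma~\ref{lem:cauchy_shwarz} gives $\int_{\mathbb R_+}\alpha\le\sigma_\alpha$ and $\alpha(\sigma_\alpha)\le 1$; these two facts are exactly what will produce the single factor $1/\alpha(\sigma_\alpha)$ in the end. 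Applying the convex version of Theorem~\ref{the:hanson_Wright_adamczac_general_alpha} yields
\[
  \mathbb P\!\left(\left\vert\tr(BX^TAX)-m_{\tr}\right\vert\gP t\right)\;\le\;\hat\alpha(t),\qquad
  \hat\alpha\;\equiv\; c_1\,\alpha\circ\Bigl(\tfrac{\id}{m}\ \boxplus\ \sqrt{\tfrac{\id}{c_2\|A\|\|B\|}}\,\Bigr),
\]
for numerical constants $c_1,c_2$; using $f^{-1}+g^{-1}\le 2\max(f^{-1},g^{-1})$ one already has $\tfrac{\id}{m}\boxplus\sqrt{\tfrac{\id}{c_2\|A\|\|B\|}}\ \ge\ \min\!\bigl(\tfrac{\id}{2m},\ \sqrt{\tfrac{\id}{2c_2\|A\|\|B\|}}\bigr)$, which isolates the $\min$ shape of the target and puts the constant $6\|A\|\|B\|$ on the $\sqrt{\id}$ branch.

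Next I would bound the median $m$. Since $2\|AXB\|_F\ge 0$, a median satisfies $m\le 2\,\mathbb E[2\|AXB\|_F]$, and Lemma~\ref{lem:borne_Ax} gives $\mathbb E\|AXB\|_F\le\|A\|_F\|B\|_F\sqrt{\|\mathbb E[\check X\check X^{T}]\|}$ for the vectorization $\check X\in\mathbb R^{pn}$ of Remark~\ref{rem:tr_quadratic_form}. To control $\|\mathbb E[\check X\check X^{T}]\|$ I would apply Lemma~\ref{lem:borne_covariance} to $\check X$: its hypothesis, concentration of each unit linear form of $\check X$ around its own mean, follows from the hypothesis of the theorem (linear forms are $1$-Lipschitz \emph{and} convex) once the median is traded for the mean through Lemma~\ref{lem:concentration_expectation_random_variable_case}; that trade replaces $\alpha$ by $\tfrac{1}{\min(1,\alpha(\int\alpha))}\alpha(\cdot/2)$, hence $\sigma_\alpha$ by at most $2\sigma_\alpha/\sqrt{\alpha(\sigma_\alpha)}$ (here $\int\alpha\le\sigma_\alpha$ and $\alpha(\int\alpha)\ge\alpha(\sigma_\alpha)$ are used). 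Lemma~\ref{lem:borne_covariance}, together with the standing hypothesis $\|\mathbb E X\|_F\le 2\sigma_\alpha/\sqrt{\alpha(\sigma_\alpha)}$, then gives $\|\mathbb E[\check X\check X^{T}]\|\le\bigl(4\sigma_\alpha/\sqrt{\alpha(\sigma_\alpha)}\bigr)^{2}$, whence $m\le C\,\|A\|_F\|B\|_F\,\sigma_\alpha/\sqrt{\alpha(\sigma_\alpha)}$ for an absolute constant $C$.

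Finally I would pass from $m_{\tr}$ to $\mathbb E[\tr(BX^TAX)]$. One has $\bigl|\mathbb E[\tr(BX^TAX)]-m_{\tr}\bigr|\le\int_{\mathbb R_+}\hat\alpha$, and the substitution $s=\bigl(\tfrac{\id}{m}\boxplus\sqrt{\tfrac{\id}{c_2\|A\|\|B\|}}\bigr)(t)$ turns this integral into $c_1\bigl(m\int\alpha+c_2\|A\|\|B\|\sigma_\alpha^{2}\bigr)\le c_1\bigl(m\sigma_\alpha+c_2\|A\|\|B\|\sigma_\alpha^{2}\bigr)$; by the bound on $m$ this shift has the same order as the scales already built into $\hat\alpha$, so applying Lemma~\ref{lem:borne_proba_decroissante} (equivalently Lemma~\ref{lem:concentration_expectation_random_variable_case}) reabsorbs it at the cost of the factor $1/\alpha(\sigma_\alpha)$. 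Substituting the bound on $m$ into $\tfrac{\id}{2m}$, keeping the $\sqrt{\id}$ branch as $\sqrt{\tfrac{\id}{6\|A\|\|B\|}}$, and collecting $c_1$ together with the numerical constants produced above gives the displayed inequality.

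The delicate point is not any individual estimate but the constant bookkeeping through the two nested median-to-mean conversions (one inside Lemma~\ref{lem:borne_covariance} for the linear forms of $X$, one for $\tr(BX^TAX)$ itself) and checking that the relative sizes of $m$, $\sigma_\alpha$ and $\|A\|\|B\|\sigma_\alpha^{2}$ let every error term be reabsorbed producing only the single factor $1/\alpha(\sigma_\alpha)$ that Lemmas~\ref{lem:cauchy_shwarz} and~\ref{lem:borne_proba_decroissante} are designed to supply; the analytic content (the two substitutions that evaluate $\int\alpha\circ(\cdot)$, and the parallel-sum/$\min$ comparison) is routine.
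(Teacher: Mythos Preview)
Your proposal is correct and follows the same route as the paper: apply Theorem~\ref{the:hanson_Wright_adamczac_general_alpha}, convert the median of $\tr(BX^TAX)$ to its expectation via Lemma~\ref{lem:concentration_expectation_random_variable_case} (the paper phrases this through Lemma~\ref{lem:linear_concentration_inferior_lipschitz_concentration}), and bound the median $m$ of $2\|AXB\|_F$ through Lemmas~\ref{lem:borne_Ax} and~\ref{lem:borne_covariance} after a second median-to-mean conversion on the linear forms of $\check X$. The only cosmetic differences are that the paper performs the median-to-mean step for $\tr(BX^TAX)$ \emph{before} bounding $m$ rather than after, and controls $m$ via $m-\mathbb E[2\|AXB\|_F]\le 2\|A\|\|B\|\!\int\!\alpha\le 2\|A\|\|B\|\sigma_\alpha$ (from the concentration of $\|AXB\|_F$) instead of your Markov bound $m\le 2\,\mathbb E[2\|AXB\|_F]$.
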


The assumption $\bigl\| \mathbb{E}[X] \bigr\|_F^2 \le  \frac{5M_2^\alpha}{\alpha(\sqrt{M_2^\alpha})}$ may appear somewhat technical, but $\alpha$ can typically be adjusted to meet this requirement in concrete applications. Naturally, the theorem is meaningful only when $M_2^\alpha < \infty$.

In the special case $n=1$ and $ \alpha(t) = 2 \exp\!\left( - (t/2\sigma)^2 \right)$, for $\sigma > 0$, one recovers the classical Hanson--Wright inequality (see \cite{adamczak2015note}) with absolute constants $C,c>0$ independent of $p$ and $\sigma$:
\[
    \mathbb{P}\!\left(
        \left| X^\top A X - \mathbb{E}[X^\top A X] \right| > t
    \right)
    \le 
    C \exp\!\left(
        -c \min\!\left(
            \frac{t^2}{\|A\|_F^2 \sigma^4},
            \frac{t}{\|A\| \sigma^2}
        \right)
    \right).
\]

If $X$ is heavy-tailed and satisfies, for instance, the concentration inequality of Proposition~\ref{pro:q_Cauchy_concentration_weak_poincare}, then the corresponding concentration bound for $X^\top A X$ takes the form
\[
    \forall t \ge t_0:\qquad
    \mathbb{P}\!\left(
        \left| X^\top A X - \mathbb{E}[X^\top A X] \right| \ge t
    \right)
    \le 
    C \left( 
        \frac{ p^{1/q} \|A\|_F \log t }{ t } 
    \right)^{\!q}
    + 
    C \left(
        \frac{ p^{2/q} \|A\| \log^2 t }{ t }
    \right)^{\!q/2},
\]
for some constants $C, t_0 > 0$ independent of $p \in \mathbb{N}$ and of the choice of $A \in \mathcal{M}_p$.
Because of the presence of the factor $p^{1/q}$, this bound is less sharp than those in \cite{buterus2023some,zhang2025probability}. However, unlike these results, the conclusion of Proposition~\ref{pro:q_Cauchy_concentration_weak_poincare} is stable under Lipschitz 
transformations, so no independence assumption is needed. Consequently, the above concentration inequality applies to a significantly broader class of random vectors.

\begin{proof}
Let us assume without loss of generality that $\restrict{\alpha}{\mathbb R_+^*}\leq 1$.
We already know from Theorem~\ref{the:hanson_Wright_adamczac_general_alpha} and Lemma~\ref{lem:linear_concentration_inferior_lipschitz_concentration}\footnote{To be a direct application of Lemma~\ref{lem:linear_concentration_inferior_lipschitz_concentration}, one should actually start with the Lipschitz concentration of $X^TAX$, but Theorem~\ref{the:hanson_Wright_adamczac_general_alpha} just provides the concentration of $\tr(BX^TAX)$, $B \in \mathcal{M}_{n}$; that is however not an issue since in Lemma~\ref{lem:linear_concentration_inferior_lipschitz_concentration}, the only relevant assumption is the concentrations of the observations $u(X^TAX)$, $u\in E'$.} that $\forall t \geq 0$:
\begin{align}\label{eq:concentration_X^TAX_m}
\mathbb P \left( \left\vert \tr(BX^TAX) - \tr \left( B\mathbb E[X^TAX] \right) \right\vert > t \right) \leq \frac{2}{\alpha \left(\sqrt{M_2^\alpha} \right)}   \alpha \circ\min \left(  \frac{t}{2m}, \sqrt{\frac{t}{6\|A\|\|B\|}} \right),
\end{align}
where we recall that $m$ is a median of $2\|AXB\|_F$.
Besides:
\begin{align*}
\mathbb P \left( \left\vert 2\|AXB\|_F- m \right\vert \geq t \right) \leq  \alpha\circ \left( \frac{\id}{2\|A\|\|B\|} \right).
\end{align*}

One can then deduce from Lemmas~\ref{lem:moments_alpha_moments_X} and~\ref{lem:cauchy_shwarz}: 
\begin{align}\label{eq:m_m_E_AXB}
\left\vert m - \mathbb E[2\|AXB\|_F] \right\vert\leq \mathbb E \left[ \left\vert \mathbb E[2\|AXB\|_F] - m \right\vert  \right] \leq  2\|A\|\|B\| M_1^\alpha \leq 2\|A\|\|B\| \sqrt{M_2^\alpha}.
\end{align}

Now, starting from the linear concentration inequality (consequence to Lemma~\ref{lem:linear_concentration_inferior_lipschitz_concentration}):
\begin{align*}
\mathbb P \left( \left\vert u^T(X -\mathbb E[X]) \right\vert \geq t \right) \leq \frac{1}{\alpha(\sqrt{M_2^\alpha})}\alpha \left( \frac{t}{2} \right),
\end{align*}
after computing $\int_{0}^{+\infty}  \frac{1}{\alpha(\sqrt{M_2^\alpha})}\alpha \left( \frac{\sqrt{t}}{2} \right) = \frac{4 M_2^\alpha}{\alpha(\sqrt{M_2^\alpha})}$, we can deduce from Lemmas~\ref{lem:borne_Ax} and~\ref{lem:borne_covariance} that:
\begin{align*}
\mathbb E[\|AXB\|_F] \leq \|A\|_F\|B\|_F\sqrt{\left\Vert \mathbb E[\vec X] \right\Vert^2 +  \frac{4M_2^\alpha}{\alpha(\sqrt{M_2^\alpha})}} \leq 3\|A\|_F\|B\|_F \sqrt{\frac{M_2^\alpha }{\alpha(\sqrt{M_2^\alpha})}}.
\end{align*}

Let us then conclude from \eqref{eq:m_m_E_AXB} that:
\begin{align*}
m \leq   3\|A\|_F\|B\|_F \sqrt{\frac{M_2^\alpha }{\alpha(\sqrt{M_2^\alpha})}}+ 2\|A\|\|B\| M_1^\alpha \leq 5\|A\|_F\|B\|_F \sqrt{\frac{M_2^\alpha }{\alpha(\sqrt{M_2^\alpha})}},
\end{align*}
and inject this bound in \eqref{eq:concentration_X^TAX_m} to obtain the result of the theorem.
\end{proof}

\begin{appendix}

\section{Bounds for monotone transport between exponential and power–law targets}\label{app:derivative_bounds_for_monotone_gaussian_transport_to_subexponential_and_power_law_targets}

Given a measure $\mu$ on $\mathbb R$, we denote its survival function:
\begin{align*}
   S_{\mu}: t \mapsto \mu([t, \infty)).
 \end{align*} 
Given a second measure $\mu'$, we denote $\phi_{\mu, \mu'}$, the ``quantile transport'' from $\mu$ to $\mu'$ defined as:
\begin{align}\label{eq:definition_trnsport}
   \phi_{\mu, \mu'}:t\longmapsto S_{\mu'}^{-1}\!\bigl(S_{\mu}(t)\bigr);
 \end{align} 
 it satisfies for all Borel set $E\subset \mathbb R$, $\mu'(E) = \mu(\phi_{\mu, \mu'}^{-1}(E))$. 

The aim of this appendix is to provide bounds on the derivative of such transport mappings to provide simple illustrations of Theorems~\ref{the:concentration_general_exponential_concentration_based}, \ref{the:concentration_heavy_heavy_tail_subadditive_assumption_h} and~\ref{the:Concentration_general_moment_onhZ_gaussian_concentration_based}. 
One can rely on:
 \begin{lemma}[Quantile calculus]\label{lem:quantile}
Given two measures $\mu,\mu'$ on $\mathbb R$ admitting respective density $f_\mu, f_{{\mu'}}$, for all $t\in \mathbb R$:
\begin{align*}
\phi_{\mu,\mu'}' (t)=\frac{f_{\mu}(t)}{f_{\mu'}(\phi_{\mu,\mu'}(t))}.
\end{align*}
\end{lemma}
\begin{proof}
Differentiating the identity $S_{\mu'}(\phi_{\mu,\mu'}(t)) = S_\mu(t)$ yields the result.
\end{proof}

Let us denote the Gaussian measure $\gamma$, it has the density $f_{\gamma}:t\mapsto \frac{1}{\sqrt{2\pi}}e^{-t^ 2/2} $. We recall the notation $\nu_q$ for the $q$-subexponential measure, it has the density $f_{\nu_q}: t \mapsto \frac{q}{2\,\Gamma(1/q)} e^{-|t|^q}$. Note that $\nu_1$ is exactly the Laplace measure having density $f_{\nu_1}:t\mapsto \frac{1}{2}e^{-|t|} $. 
\begin{proposition}\label{pro:bound_of_transport_functions_subexponential}
  There exist some constants (depending on $q$) $C,t_0>0$ such that $\forall t\geq t_0$:
  \begin{align*}
    \phi_{\nu_1,\nu_q}'(t)\leq Ct^{1/q-1}
    &&\et&&
    \phi_{\gamma,\nu_q}'(t)\leq Ct^{2/q-1}.
  \end{align*}
\end{proposition}

Let us first provide some preliminary lemmas.
\begin{lemma}\label{lem:bound_expression_exponential}
One can express for all $t\geq 0$, $S_{\nu_1}(t) = f_{\nu_1}(t) = \frac{1}{2} e^{-|t|}$, 
besides, one can bound for all $t\geq 0$, $\frac{1}{2\sqrt{2\pi}}\ \le\ \frac{f_{\gamma}(t)}{S_{\gamma}(t)}\ \le\ t+\frac{1}{t}$ 
and for all $q>0$, there exist constants $A_1,A_2>0$ such that:
  \begin{align}\label{eq:Fe-bar}
\forall t\geq 0: \qquad A_1 (t+1)^{1-q}e^{-t^q}\leq S_{\nu_q}(t)\leq A_2 (t+1)^{1-q}e^{-t^q}.
\end{align}
\end{lemma}

\begin{lemma}[Bound on the subexponential transport]\label{lem:phi-e-growth}
There exists a constant $C>0$ such that:
\begin{align}\label{eq:phi-e-growth}
 \phi_{\nu_1,\nu_q}(t) \leq C(t+1)^{1/q}
 &&\et&&
 \phi_{\gamma,\nu_q}(t) \leq C(t+1)^{2/q}.
\end{align}
\end{lemma}

\begin{proof}
For $t\ge0$ we have from~\eqref{eq:definition_trnsport} and Lemma~\ref{lem:bound_expression_exponential} the tail identity
\[
   \frac12 e^{-t}=S_{\nu_q}(\phi_{\nu_1,\nu_q}(t)),
\]
Lemma~\ref{lem:bound_expression_exponential} then yields:
\[
  \frac12 e^{-t}
  \;\le\; A_2\,(1+\phi_{\nu_1,\nu_q}(t))^{1-q} e^{-\phi_{\nu_1,\nu_q}(t)^q}.
\]
Taking logarithms and rearranging yields
\begin{align}\label{eq:bound_phi_expo}
  \phi_{\nu_1,\nu_q}(t)^q -(1-q)\log(1+\phi_{\nu_1,\nu_q}(t))
  \le t + \log(2A_2)
\end{align}
Now, there exists a constant $K>0$ depending on $q$ such that:
\begin{align}\label{eq:lower_bound_phi_log_2}
  \phi_{\nu_1,\nu_q}(t)\geq K
  \qquad \Longrightarrow \qquad 
  \phi_{\nu_1,\nu_q}(t)^q - (1-q)\log(1+\phi_{\nu_1,\nu_q}(t))\geq \frac{1}{2} \phi_{\nu_1,\nu_q}(t)^q,
\end{align}
Consequently, there exist $C_2, t_0$ such that
\[
  \forall t \geq t_0: \qquad \phi_{\nu_1,\nu_q}(t)^q \;\le\; C_2\,t,
\]
and taking $q$-th roots gives \eqref{eq:phi-e-growth}.

To bound $\phi_{\gamma, \nu_q}$, one can show that transporting $\nu_q$ from $\gamma$ leads to the following bound that replaces~\eqref{eq:bound_phi_expo} with:
\begin{align*}
  \phi_{\gamma, \nu_q}(t)^q - (1-q)\log(1+\phi_{\gamma, \nu_q}(t))\leq \log(C) + \frac{t^2}{2} + \log(1+t).
\end{align*}

Now,  there exists a constant $C_1>0$ such that $\log(C) + \frac{t^2}{2} + \log(1+t)\leq C_1(t+1)^2$ and consequently, \eqref{eq:lower_bound_phi_log_2} allows to deduce the existence of a constant $C_3>0$ such that:
\begin{align*}
  \phi_{\gamma, \nu_q}(t)\leq C_3(t+1)^{\frac{2}{q}}.
\end{align*}
\end{proof}

\begin{proof}[Proof of Proposition~\ref{pro:bound_of_transport_functions_subexponential}]
One can rely on Lemma~\ref{lem:quantile} to set for any $r,q>0$:
\begin{align*}
  \phi_{\nu_r, \nu_q}'(t)=\frac{f_{\nu_r}(t)}{f_{\nu_q}(\phi_{\nu_r,\nu_q}(t))}
=\frac{f_{\nu_r}(t)}{S_{\nu_r}(t)}\cdot\frac{S_{\nu_q}(\phi_{\nu_r,\nu_q}(t))}{f_{\nu_q}(\phi_{\nu_r,\nu_q}(t))}\leq C \frac{f_{\nu_r}(t)}{S_{\nu_r}(t)}(1+\phi_{\nu_1,\nu_q}(t))^{\,1-q}.
\end{align*}

Taking $r=1$, we know from Lemma~\ref{lem:bound_expression_exponential} that $\frac{f_{\nu_r}(t)}{S_{\nu_r}(t)}=1$, and therefore Lemma~\ref{lem:phi-e-growth} allows us to deduce the bound on $\phi_{\nu_1,\nu_q}'$. 
For the bound on $\phi_{\gamma, \nu_q}'$ which is, up to a constant, the same as the bound on $\phi_{\nu_2, \nu_q}'$, one can rely on Lemma~\ref{lem:bound_expression_exponential} that yields:
\begin{align*}
\frac{f_{\gamma}(t)}{S_{\gamma}(t)}
\leq C' \left( t + 1 \right),
\end{align*}
for some constant $C'>0$.
\end{proof}

Recall that the $q$-Cauchy density is denoted $\kappa_q$ and has the density $f_{\kappa_q}: t \mapsto \frac{q}{2}\,(1+|t|)^{-(q+1)}$.
Its survival function is defined for any $t\in \mathbb R_+$ as then $S_{\kappa_q}(t) = \frac12\,(1+t)^{-q}$. 

\begin{proposition}\label{pro:transport_cauchy}
There exist constants $C,t_0>0$ such that:
    \begin{align*}
    \forall t\geq t_0:\qquad
    \phi_{\nu_1,\kappa_q}'(t)\leq  C e^{t/q}
    &&\et&&
    \phi_{\gamma,\kappa_q}'(t)\leq  C  t^{1+\frac{1}{q}}e^{t^2/2q}.
  \end{align*}
\end{proposition}

\begin{proof}
Let us start from the identity true, for any $r>0$, $t\geq 0$:
\begin{align*}
   S_{\nu_r}(t)=S_{\kappa_q}(\phi_{\nu_r,\kappa_q}(t)) = \frac12(1+\phi_{\nu_r,\kappa_q}(t))^{-q}.
\end{align*}
 Then differentiating the identity $\phi_{\nu_r,\nu_q}(t) = (2S_{\nu_r}(t))^{-1/q}-1$, one gets:
\begin{align*}
\phi_{\nu_r,\kappa_q}'(t)=\frac{2^{-1/q}}{q}\,f_{\nu_r}(t)\,S_{\nu_r}(t)^{-\left(1+\frac{1}{q}\right)}.
\end{align*}
and Lemma~\ref{lem:bound_expression_exponential} provides us the existence of $C>0$ such that\footnote{One can actually get $\phi_{\nu_1,\kappa_q}'(t)=(1/q)e^{t/q}$.} $\forall t\geq 0$:
\begin{align*}
  \phi_{\nu_1,\kappa_q}'(t)\leq C e^{t/q}
  &&\et&&
  \phi_{\gamma,\kappa_q}'(t)\leq C  \left( t +\frac{1}{t} \right)^{1+\frac{1}{q}}e^{t^2/2q}.
\end{align*}

The second bound diverges when $t$ is close to $0$, but since $f_{\gamma}$ and $S_{\gamma}$ are bounded from above and below around $0$ one can still find a constant $C>0$ such that:
\begin{align*}
  \forall t\geq 0:\qquad \phi_{\gamma,\kappa_q}'(t)\leq C  \left( t +1 \right)^{1+\frac{1}{q}}e^{t^2/2q}.
\end{align*}
\end{proof}

\section{Proof of Side results}\label{sec:side_results_of_section_ref}
Given an operator $f: \mathbb R \to 2^{\mathbb R}$ and $p>0$, we denote naturally $f^p: x\mapsto f(x)^p$.
\begin{lemma}[H\"older]\label{lem:holder}
  Given $f\in \mathcal M_{\downarrow}$ with $ f\geq 0$ and $a,b \in \dom(f)$ such that $a\leq b$:
  \begin{align*}
    \int_a^{b} f^q \leq (b-a)^{\frac{p-q}{p}} \left( \int_a^{b} f^p  \right)^{\frac{q}{p}}
  \end{align*}
\end{lemma}
\begin{proof}
Note first that for any $r>0$:
  \begin{align*}
     \int_a^ b f^r = \sup_{g\in \mathcal M_\downarrow^ s, 0\leq g\leq f^ r} \int_a^ b g = \sup_{h\in \mathcal M_\downarrow^ s, 0\leq h\leq f} \int_a^ b h^r, 
   \end{align*}
   (if $ g=\min_{i\in [n]}g_i \Incr_{u_i}\in \mathcal M_\downarrow^ s$ satisfies $0\leq g\leq f^ r$, one can consider $h=\min_{i\in [n]} g_i^{\frac{1}{r}} \Incr_{u_i}$).

  Second, given $h = \min_{i\in [n]}y_i \Incr_{x_i}\in \mathcal M_\downarrow^s$ such that $\ran(h)\subset \mathbb R_+$ and denoting $\forall i\in [n]$, $x_i^a = \max(a,x_{i})$, $x_i^ b = \min(b,x_i)$ and $x_0^a=a$, we can bound from H\"older inequality:
  \begin{align*}
    \int_a^b h^q
    &= \sum_{i=1}^n (x_i^ b-x_{i-1}^a)^{\frac{p-q}{p}} (x_i^ b-x_{i-1}^a)^{\frac{q}{p}} y_i^q\\
    &\leq \left( \sum_{i=1}^n (x_i^ b-x_{i-1}^a) \right)^{\frac{p-q}{p}} \left( \sum_{i=1}^n (x_i^ b-x_{i-1}^a) y_i^p \right)^{\frac{q}{p}}
    = (b-a)^{\frac{p-q}{p}} \left( \int_a^b h^p \right)^{\frac{q}{p}}.
  \end{align*}
\end{proof}
\begin{lemma}\label{lem:int_inverse_op}
  Given a maximally nonincreasing operator $f$:
  \begin{align*}
    \int_0^{\infty} f = \int_0^{\infty} f^{-1} .
  \end{align*}
\end{lemma}
Recall from \eqref{eq:infinite_integral} that it is possible that both of the integral diverge.
\begin{proof}
  Given a simple operator $h = \max_{k\in[n]} y_k \Incr_{x_k}$ with $x_n\geq \cdots\geq x_1\geq x_0 \equiv 0$ and $y_1\geq\cdots\geq y_n\geq y_{n+1} \equiv 0$, note from Proposition~\ref{pro:inverse_min_max} that
  \begin{align*}
     h^ {-1} = \max_{k\in[n]} (y_k \Incr_{x_k})^ {-1} = \max_{k\in[n]} x_k \Incr_{y_k}\in \mathcal M_\downarrow^s,
   \end{align*} 
   and, by definition of the integral of simple operators:
  \begin{align*}
    \int_0^{\infty} h 
    = \sum_{k=1}^n  (x_i-x_{i-1}) y_i
    = \sum_{k=1}^n  (y_i-y_{i+1}) x_i
    = \int_0^{\infty} h^{-1}.
  \end{align*}
  One can then deduce that:
  \begin{align*}
     \int_0^{\infty} f
     &= \sup_{h\leq f, h\in \mathcal M_\downarrow^s} \int_0^{\infty} h 
     = \sup_{h\leq f, h\in \mathcal M_\downarrow^s} \int_0^{\infty} h^{-1} 
     &= \sup_{h^{-1}\leq f^{-1}, h^{-1}\in \mathcal M_\downarrow^s} \int_0^{\infty} h^{-1} 
     = \int_0^{\infty} f^{-1},
   \end{align*} 
   since we have seen that $h\in   \mathcal M_\downarrow^s \Leftrightarrow h^{-1}\in   \mathcal M_\downarrow^s$ and $h\leq f \Leftrightarrow h^{-1}\leq f^{-1}$ thanks to Lemma~\ref{lem:swapping_nonincreasing}.
\end{proof}
\begin{proof}[Proof of Lemma~\ref{lem:cauchy_shwarz}]
Simply bound, from Lemmas~\ref{lem:holder} and~\ref{lem:int_inverse_op}
\begin{align*}
  M_q^\alpha=\int\alpha\circ \id^{\frac{1}{q}}
  &=\int_0^{\infty}(\alpha^{-1})^q
  =\int_0^{\alpha_0}(\alpha^{-1}(s))^q\\
  &\leq \alpha_0^{\frac{p-q}{p}} \left( \int_0^{\alpha_0}(\alpha^{-1})^p,ds \right)^{\frac{q}{p}}
  = \alpha_0^{\frac{p-q}{p}} (M_p^\alpha)^{\frac{q}{p}}.
\end{align*}
\end{proof}

\begin{proof}[Proof of Lemma~\ref{lem:phi_ab_bound}]
Let $t = \philambert_{a,b}^{-1}(u)$, so $(\log t)^a t^b = u$ and set $s = \log t > 0$ (since $t > 1$) such that the equation becomes $u^{1/b} = s^{a/b} e^s$. 

The ratio $r(u) = t / [(\log u)^{-a/b} u^{1/b}]$ expresses:
\begin{align*}
  r(u) 
  = \frac{e^s (\log u)^{a/b}}{u^{1/b}} 
  = \left( \frac{\log u }{s} \right)^{\frac{a}{b} }
  =b^{\frac{a}{b} } \left( 1 + \frac{a \log s}{b s} \right)^{\frac{a}{b} }
  \equiv (bh(s))^{\frac{a}{b} }.
\end{align*}
The derivative of $h$ is, for all $s>0$: $h'(s) = \frac{a}{bs^2} (1 - \log s)$, which vanishes at $s = e$. 

The bound $u > e^{b}$, leads $se^{\frac{bs}{a}}\geq e^{\frac{b}{a}}$ and consequently $s \geq 1$. In this regime, as $s \to 1$ or $s \to +\infty$, $h(s) \to 1$, so $r(u) \to b^{a/b}$ from above. Moreover, $h(s) > 1$ for $s > 1$ except at boundaries, ensuring $r(u) > b^{a/b}$ which leads to our result\footnote{Alternatively, one could also employ the asymptotic results on the Lambert function satisfying for all $z>0$, it satisfies $W(z)e^{W(z)}=z$. 
Note indeed that $s = \frac{a}{b} W\left( \frac{b}{a} u^{1/a} \right)$ and the asymptotic estimation $W(z) = \log z - \log\log z + o(1)$ as $z\to\infty$ allows to conclude.}.
\end{proof}

\begin{proof}[Proof of Proposition~\ref{pro:concentration_heavy_tailed_concentration_entry_wise_lipschitz}]
Let us introduce for all $i\in\{0\}\cup [n]$:
\begin{align*}
   M_i \equiv \mathbb E \left[ f(X_1,\ldots, X_n) \ | \ X_{1},\ldots, X_i \right].
 \end{align*}
Further, $\forall i\in [n]$, let us denote $D_i \equiv M_i-M_{i-1}$ such that $f(X)- \mathbb E[f(X)] = \sum_{i=1}^n D_i$. 
For $p\in[1,2]$, the Bahr–Esseen bound for martingales \cite[Theorem~1 -- symmetric case]{vonBahr-Esseen-65} provides:
\begin{equation}\label{eq:vbe-mart}
\mathbb{E} \left[ \left\vert \sum_{i=1}^n D_i \right\vert^p \right]\ \le2\sum_{i=1}^n \mathbb{E}[|D_i|^p].
\end{equation}
Now, let us denote $g_i : (z_1,\ldots, z_i) \mapsto \mathbb E \left[ f(X_1,\ldots, X_n) \ | \ X_{1}=z_1,\ldots, X_i=z_i \right]$, $X_i'$, an independent copy of $X_i$ and $\mathbb E_i'$, the expectation on $X_i'$, we can bound with Jensen inequality:
\begin{align*}
   \mathbb E[|D_i|^p] 
   &=\mathbb E \left[ \left\vert \mathbb E_{i}'[g_{i}(X_{1},\ldots, X_i) - g_{i}(X_1,\ldots, X_i') ]\right\vert^p \right] \\
   &\leq\mathbb E \left[ \left\vert g_{i}(X_{1},\ldots, X_i) - g_{i}(X_1,\ldots, X_i') ]\right\vert^p \right] 
   \leq \mathbb E \left[ \left\vert X_i - X_i'\right\vert^p \right].
 \end{align*} 
Finally apply Markov’s inequality to obtain:
\[
\mathbb{P}\big(|f(X)-E[f(X)]|\ge t\big)
\leq \ \frac{\mathbb{E}[\left\vert \sum_{i=1}^n D_i \right\vert^p]}{t^p}
\ \le \frac{2\sum_{i=1}^n\mathbb{E}[|X_i-X_i'|^p]}{t^p} .
\]
\end{proof}
\begin{proof}[Proof of Lemma~\ref{lem:polynomes_recursifs_factorial}]
Let us first find a relation between the coefficients $a_i^{(k)}$ from the expression of $P_1, \ldots, P_k$. Of course, one has $P_1 = m_{d}X$, thus $a_1^{(1)} = 1$. Now injecting \eqref{simlification_Pk} in \eqref{eq:def_suite_Pk}, one obtains (with the changes of variable $h\leftarrow l+i$ and $j\rightarrow h-i$):
\begin{align*}
P_k 
&= \sum_{l=1}^k \sum_{i=0}^{k-l} \frac{a_i^{(k-l)} }{l!} m_{d-k+l+i} X^{i+l}\\
&= \sum_{h=1}^k \sum_{i=0}^{h} \frac{a_i^{(k-h+i)} }{(h-i)!} m_{d-k+h} X^{h}
= \sum_{h=1}^k \sum_{j=0}^{h} \frac{a_{h-j}^{(k-j)} }{j!} m_{d-k+h} X^{h}.
\end{align*}
One then gets the following recurrence relation between the coefficients:
\begin{align*}
a_i^{(k)} = \sum_{l= 0}^{i} \frac{a_{i-l}^{(k-l)} }{l!}.
\end{align*}
From the recursion, one checks by induction on $k$ that for each fixed $i$, $a_i^{(k)}$ does not depend on $k$ as long as $k\ge i$. Hence we shall write $a_i \equiv a_i^{(k)}$ for any $k\ge i$.

Let us then show recursively that $a_i\leq \frac{(i+1)^ i}{i!}$. Of course $a_0=1\leq \frac{1^ 0}{0!}$, then given $j\in [d]$, if we assume that this inequality is true for $i = 0,\ldots, j-1$, one can bound:
\begin{align*}
a_j \leq  \sum_{l= 0}^{j} \frac{(j-l+1)^{j-l}}{(j-l)!\, l!} \leq \frac{1}{j!}\sum_{l= 0}^{j} \binom{j}{l}  j^l = \frac{(j+1)^j}{j!} \leq \left( \frac{j+1}{j} \right)^j \frac{e^j}{\sqrt{2\pi j}}\leq e^j,
\end{align*}
thanks to the Stirling formula.
\end{proof}
\end{appendix}

\bibliographystyle{imsart-nameyear} 
\bibliography{biblio}


\end{document}